\newcommand{\BMO}{\mathrm{BMO}}
\newcommand{\rrvert}{\vert}
\newcommand{\rrVert}{\Vert}
\newcommand{\llvert}{\vert}
\newcommand{\llVert}{\Vert}
\renewcommand{\mid}{|}
\newtheorem{theorem}{Theorem}[section]
\newtheorem{lemma}[theorem]{Lemma}
\newtheorem{proposition}[theorem]{Proposition}
\newtheorem{corollary}[theorem]{Corollary}
\newcommand{\mathbbm}{\mathbh}
\newcommand{\Std}{\mathrm{St.d.}}
\newcommand{\N}{\mathbb{N}} 
\newcommand{\R}{\mathbb{R}} 
\newcommand{\F}{\mathcal{F}} 
\renewcommand{\S}{\mathcal{S}}
\newcommand{\bD}{\mathbb{D}}
\newcommand{\bE}{\mathbb{E}}
\newcommand{\bL}{\mathbb{L}}
\newcommand{\bN}{\mathbb{N}}
\newcommand{\bP}{\mathbb{P}}
\newcommand{\bR}{\mathbb{R}}
\newcommand{\cB}{\mathcal{B}}
\newcommand{\cF}{\mathcal{F}}
\newcommand{\cH}{\mathcal{H}}
\newcommand{\cL}{\mathcal{L}}
\newcommand{\cN}{\mathcal{N}}
\newcommand{\cR}{\mathcal{R}}
\newcommand{\cS}{\mathcal{S}}
\newcommand{\cT}{\mathcal{T}}
\newcommand{\cY}{\mathcal{Y}}
\newcommand{\cZ}{\mathcal{Z}}
\newcommand{\ti}{{t_{i}}}
\newcommand{\tip}{{t_{i+1}}}
\newcommand{\tN}{{t_N}}
\newcommand{\ip}{{i+1}}
\def\o{\theta}
\def\bT{\bar{\Theta}}
\newcommand{\ud}{\mathrm{d}}
\newcommand{\uds}{\mathrm{d}s}
\newcommand{\udt}{\mathrm{d}t}
\newcommand{\udu}{\mathrm{d}u}
\newcommand{\udr}{\mathrm{d}r}
\newcommand{\udw}{\mathrm{d}W}
\newcommand{\udws}{\mathrm{d}W_s}
\newcommand{\udwr}{\mathrm{d}W_r}
\newcommand{\1}{\mathbbm{1}}
\begin{document}
\begin{frontmatter}

\title{Time discretization of FBSDE with polynomial growth drivers and
reaction--diffusion PDEs\thanksref{T3}}
\runtitle{Numerics for FBSDE with polynomial growth drivers}

\begin{aug}
\author[A]{\fnms{Arnaud}~\snm{Lionnet}\ead[label=e1]{arnaud.lionnet@maths.ox.ac.uk}\thanksref{T1}},
\author[B]{\fnms{Gon\c calo} \snm{dos Reis}\corref{}\ead[label=e2]{G.dosReis@ed.ac.uk}\thanksref{T2}}
\and
\author[B]{\fnms{Lukasz} \snm{Szpruch}\ead[label=e3]{l.szpruch@ed.ac.uk}\ead[label=u1,url]{http://www.foo.com}}
\runauthor{A. Lionnet, G. dos Reis and L. Szpruch}
\affiliation{University of Oxford,
University of Edinburgh and CMA/FCT/UNL,
and
University of Edinburgh}
\address[A]{A. Lionnet\\
Oxford-Man Institute\\
University of Oxford\\
Eagle Hause, Walton Well Road\\
Oxford OX2 6ED\\
United Kingdom\\
\printead{e1}}
\address[B]{G. dos Reis\\
L. Szpruch\\
School of Mathematics\\
University of Edinburgh\\
Edinburgh, EH9 3JZ\\
United Kingdom\\
\printead{e2}\\
\phantom{E-mail: }\printead*{e3}}
\end{aug}
\thankstext{T3}{Supported by the Engineering and Physical Sciences
Research Council (UK) which has funded this work as part of the
Numerical Algorithms and Intelligent Software Centre (NAIS) under Grant
EP/G036136/1.}
\thankstext{T1}{Supported in part by the Engineering and Physical
Sciences Research Council (UK) via the Grant EP/P505216/1,
as well as by the Oxford-Man Institute.}
\thankstext{T2}{Supported by the \emph{Funda\c{c}\~{a}o para a Ci\^
{e}ncia e a Tecnologia} (Portuguese
Foundation for Science and Technology) through the project
PEst-OE/MAT/UI0297/2014
(Centro de Matem\'atica e Aplica\c{c}\~{o}es CMA/FCT/UNL).}

\received{\smonth{9} \syear{2013}}
\revised{\smonth{6} \syear{2014}}

%
\begin{abstract}
In this paper, we undertake the error analysis of the
time discretization of systems of Forward--Backward Stochastic Differential
Equations (FBSDEs) with drivers having polynomial
growth and that are also monotone in the state variable.

We show with a counter-example that the natural explicit Euler
scheme may diverge, unlike in the canonical Lipschitz driver case.
This is due to the lack of a certain stability property of the Euler scheme
which is essential to obtain convergence.
However, a thorough analysis of the family of $\theta$-schemes reveals that
this required stability property can be recovered if the scheme is
sufficiently implicit. As a by-product of our analysis, we shed some
light on
higher order approximation schemes for FBSDEs under non-Lipschitz condition.
We then return to fully explicit schemes and show that an appropriately tamed
version of the explicit Euler scheme enjoys the required stability property
and as a consequence converges.

In order to establish convergence of the several discretizations, we extend
the canonical path- and first-order variational regularity results
to FBSDEs with polynomial growth drivers which are also monotone. These
results are of independent interest for the theory of FBSDEs.
\end{abstract}

%
\begin{keyword}[class=AMS]
\kwd[Primary ]{65C30}
\kwd{60H35}
\kwd[; secondary ]{60H07}
\kwd{60H30}
\end{keyword}
\begin{keyword}
\kwd{FBSDE}
\kwd{monotone driver}
\kwd{polynomial growth}
\kwd{time discretization}
\kwd{path regularity}
\kwd{calculus of variations}
\kwd{numerical schemes}
\end{keyword}
\end{frontmatter}

\setcounter{footnote}{3}

\section{Introduction}\label{sec1}
There is currently a long literature on the numerical approximation of FBSDE
with Lipschitz conditions [\citet{BouchardTouzi2004},
\citet{CrisanManolarakis2012}, \citet{GobetTurkedjiev2011},\break 
\citeauthor{Chassagneux2012} (\citeyear{Chassagneux2012,Chassagneux2013}) and references within].
In this article, we address the case of FBSDEs with drivers having polynomial
growth in the state variable, which has not been studied before,
and provide customized analysis of various implicit and explicit schemes.
The importance of FBSDEs with nonlinear drivers is due to the fruitful
connection between FBSDEs and partial differential equations (PDEs).
Many biological and physical phenomena are modeled using PDEs of parabolic
type, say for
$(t,x)\in[0,T]\times\bR^d$
\[
-\partial_t v(t,x) - \cL v(t,x) -f \bigl(t,x,v(t,x),(\nabla v
\sigma) (t,x) \bigr)=0,\qquad v(0,x)=g(x),
\]
with $\cL$ a second-order elliptic differential operator and certain
measurable functions $f$ and $g$. A very large
class of such equations can be linked to the solution process
$\Theta^{t,x}=(X^{t,x},Y^{t,x},Z^{t,x})$ of certain forward--backward
stochastic
differential equations (FBSDEs) with the\vspace*{1pt} following type of dynamics for
$(t,x)\in[0,T]\times\bR^d$,
$s\in[t,T]$ and $W$ a Brownian-motion
%
\begin{eqnarray}
\label{canonicSDE} X^{t,x}_s&=& x + \int_t^s
b\bigl(r,X^{t,x}_r\bigr)\,\udr+ \int_t^s
\sigma\bigl(r,X^{t,x}_r\bigr)\,\udwr,
\\
\label{canonicFBSDE} Y^{t,x}_s&=& g\bigl(X^{t,x}_T
\bigr) + \int_s^T f\bigl(r,\Theta_r^{t,x}
\bigr) \,\udr- \int_s^T Z^{t,x}_r\,\udwr,
\end{eqnarray}
via the so-called nonlinear Feynman--Kac formula:
$v(T-t,x)=Y^{t,x}_t$ [see, e.g., \citet{ElKarouiPengQuenez1997}].

In many applications of interest, like reaction--diffusion type
equations, the
function $f$ is a polynomial (in $v$), for example, the Allen--Cahn
equation, the FitzHugh--Nagumo equations (with or without recovery) or the
standard nonlinear heat and Schr\"odinger equation [see
\citet{Henry1981}, \citet{Rothe1984}, \citet{EstepLarsonWilliams2000},
\citet{Kovacs2011} and references].

Motivated by these applications, we look further at the connection
between parabolic PDEs and FBSDEs with monotone drivers $f$ of polynomial
growth [see \citet{Pardoux1999}, \citet{BriandCarmona2000} and
\citet{BriandDelyonHuEtAl2003}]. By monotonicity,
we mean that $\langle v'-v, f(v')-f(v)\rangle\leq\mu\llvert v'-v
\rrvert^2$, for some
$\mu\ge0$, and any $v,v'$ (one can also find the terminology that $f$ is
one-sided Lipschitz). We extend the above mentioned works by providing further
regularity estimates for the \mbox{FBSDE} in question (modulus of continuity, path
and variational regularity). Then we proceed to a thorough analysis of
various numerical methods that open the door to Monte Carlo methods for
solving numerically the corresponding PDEs.


The work and results we present should be understood as a first step in the
numerical analysis of FBSDE with monotone drivers of polynomial growth, wider
than the Lipschitz driver BSDE
setting, with the intent of deepening the applicability of FBSDEs to
reaction--diffusion equations. Moreover, we work without
assuming knowledge on the density function or the moment generating function
of the forward process $X$. In some applications where $X$ is simply the
Brownian motion, it is possible to derive a numerical solver that takes
advantage on this knowledge; see, for example, \citet{ZhangGunzburgerZhao2013}. The
work we develop aims at black-box type algorithms which do not take advantage
of any of the specific forms the FBSDEs coefficients may take.

\subsection*{A motivating example}
To better understand why the explicit Euler scheme seems not to be suitable
for approximating the solution to BSDEs with non-Lipschitz drivers, let us
consider the following simple example (for further details and notational
setup, see Section~\ref{secPreliminaries} and Appendix~\ref{appdxmotivatignexample}):
%
\begin{equation}
\label{counterexampleFBSDE} Y_t = \xi- \int_{t}^{1}
Y_s^3 \,\uds- \int_{t}^{1}
Z_s \,\ud W_s,\qquad t\in[0,1]
\end{equation}
with the terminal condition $\xi\in\cF_1$. For any
$\xi\in L^p$ for $p\geq2$, there exists\footnote{Existence and uniqueness
follows from Section~2 in \citet{Pardoux1999} or
Theorem~\ref{theo-existenceuniquenesscanonicalFBSDE} below.} a
unique (square-integrable) solution $(Y,Z)$ to the above BSDE.

Fix the number of time-discretization points to be $N+1>0$. The explicit
Euler scheme for the above equation with uniform time step $h=1/N$ is,
with the notation $Y_i:= Y_{i/N}$, given by
%
\begin{eqnarray}\label{eqeulerschememotivating}
Y_i = \bE\bigl[ Y_{\ip} -
Y_{\ip}^3 h \mid\cF_i \bigr]= \bE\bigl[
Y_{\ip}\bigl(1 - hY_{\ip}^2\bigr) |
\cF_i \bigr],
\nonumber\\[-8pt]\\[-8pt]
\eqntext{i=0,\ldots,N-1,}
\end{eqnarray}
where $Y_N=\xi$.

It is a simple calculation
(see Appendix~\ref{appdxmotivatignexample} for the details)
to show that if
%
\begin{equation}
\label{doubleexponentialimpact} \xi\ge2\sqrt{N} \qquad\mbox{then }
\llvert Y_i\rrvert
\ge2^{2^{N-i}}\sqrt{N} \qquad\mbox{for } i=0,\ldots,N.
\end{equation}
With this simple computation in mind, it is possible to
show that there exists a random variable $\xi$ whose moments of any
order are
finite and for which the explicit Euler scheme diverges. The result below
is a corollary of Lemma~\ref{lemma-counterexample} that can be found in
Appendix~\ref{appdxmotivatignexample}.

\begin{lemma}
\label{lemmaeulerdoesntwork}
Let $\pi^{N}$ be the uniform grid over the interval $[0,1]$ with $N+1$
points, $N$ an even number ($t=1/2$ is common to all grids $\pi^N$).
For any $\xi\in L^p(\cF_1)$, for $p\geq2$, let $(Y,Z)$ denote the solution
to (\ref{counterexampleFBSDE}).

Then there exists a random variable $\xi\in L^p\setminus L^\infty$
for any
$p\geq2$ such that
\[
\lim_{N\to\infty} \bE\bigl[ \bigl\llvert Y_{1/2}^{(N)}
\bigr\rrvert\bigr]=+\infty,
\]
where $Y^{(N)}_{1/2}$ is the Euler approximation of
$Y$ on the time
point $t=1/2$ via
(\ref{eqeulerschememotivating}) over the
grids $\pi^{N}$.
\end{lemma}

The special random variable $\xi$ we work with is normally distributed and
it is known that $\bP[\llvert\xi\rrvert >2\sqrt{N}]$ is
exponentially small (see Lemma~\ref{lemmapropertiesofBM}). What our counter-example shows is that although
$\xi$ may take very large values on an event with exponentially small
probability, the impact of these very large values when propagated through
the Euler explicit scheme is doubly-exponential [see
(\ref{doubleexponentialimpact})].

This double-exponential impact is precisely a consequence of the
superlinearity of the driver. In general, the terminal condition
$\xi$ is an unbounded random variable (RV) so there is a positive probability
of the scenario where $\xi\ge2\sqrt{N}$ no matter how small a
time-step we
choose. This indicates that, in general, the explicit Euler scheme
may diverge, as it happens in SDE context
\citet{HutzenthalerJentzenKloeden2011}.
Therefore, one needs to seek alternative (e.g., implicit) approximations
for BSDE with
polynomial drivers that are also monotone and/or find conditions under which
it is possible for the explicit scheme to work, as explicit schemes have
certain computational advantages over implicit ones.

\subsection*{Our contribution}
\begin{itemize}
\item We extend the canonical Zhang path regularity theorem [see
\citet{MaZhang2002a}, \citet{ImkellerDosReis2010}], originally proved under
Lipschitz assumptions, to our
polynomial growth monotone driver setting proving in between all the required
stochastic smoothness results; essentially all first-order variations
of the
solution processes and estimates on the modulus of continuity.

\item For our non-Lipschitz setting, we provide a thorough analysis of the
family of $\o$-schemes, where $\o\in[0,1]$ characterizes the degree of
implicitness of the scheme.
Contrary to the FBSDEs with Lipschitz driver we show that choosing
$\theta\ge1/2$ is essential to ensure the stability
of the scheme, in a similar way to the SDE context [see
\citet{mao2012strong}].
This is to our knowledge the first result
in the numerical BSDEs literature that shows a superior stability of the
implicit scheme over the standard explicit one. We also generalize the concept
of stability for discretization schemes [see that in \citeauthor{Chassagneux2012} (\citeyear{Chassagneux2012,Chassagneux2013})].
This, among others things, paves a way for deriving higher order
approximations
schemes for FBSDEs with non-Lipschitz drivers. As an example, we prove a
higher
order of convergence for the trapezoidal scheme (the case $\o=1/2$).

\item We construct an appropriately tamed version of the
explicit Euler scheme for which the required stability property can be
recovered. This allows us to obtain convergence of the scheme. Interestingly
enough, in the special case where the driver of the FBSDEs does not
depend on the
SDE solution it is enough to appropriately tame the terminal condition,
leaving the rest of the Euler approximation unchanged.
\end{itemize}

As a rule of thumb, implicit schemes tend to be more
robust than explicit ones. Unfortunately implicit schemes involve
solving an
implicit equation, which creates an extra layer of complexity when compared
to explicit schemes. A secondary aim of this work is to distinguish
under which conditions explicit and implicit schemes can be used.

As standard in numerical analysis, we derive the global error estimates of
various numerical schemes by analyzing their one-step errors and stability
properties
(which allows us to study how errors propagate with time).
We formulate the \emph{Fundamental Lemma} [following the nomenclature from
\citet{milstein2004stochastic}] that states how to estimate the global
error of a stable approximation scheme in terms of its local errors. The
lemma
is proved
under minimal assumptions. We stress that a similar approach has been
used in \citet{ChassagneuxCrisan2012} and \citeauthor{Chassagneux2012} (\citeyear{Chassagneux2012,Chassagneux2013}); however, their results are not sufficiently general
to deal with non-Lipschitz drivers.

The structure of the global error estimate given by the Fundamental Lemma
allows us to study in a very easy and transparent
way the special case of the \mbox{$\o$-}scheme with $\o=1/2$ (trapezoidal rule)
which has a higher order of convergence. In this context, we also conjecture
a candidate for the second-order scheme.

Concerning the implementation of the presented schemes, we propose an
alternative estimator of the component $Z$ whose standard deviation, contrary
to usual estimator, does not explode as the time step vanishes.

Finally, we note that in proving convergence for the mostly-implicit schemes,
we prove $L^p$-type uniform bounds for the scheme, thus extending the
classical $L^2$-bound obtained previously for the discretization of Lipschitz
FBSDEs [see \citet{BouchardTouzi2004}, \citet{GobetTurkedjiev2011} and
references therein].

This work is organized as follows.
In Section~\ref{secPreliminaries}, we define notation and
recall standard results from the literature.
In Section~\ref{secregularity}, we establish first-order variational results for the solution of the FBSDEs as well as stating
the path regularity results required for the study of numerical schemes within
the FBSDE framework. The remaining sections contain the discussion of several
numerical schemes: in Section~\ref{sectionnumericaldiscretization},
we define
the numerical discretization procedure and state general estimates for
integrability and on the local errors.
In Section~\ref{sectionthetaimplicitschemes}, we establish the convergence
of the implicit dominating schemes and in Section~\ref{sectionexplicitscheme}
the convergence of the tamed explicit scheme [after the terminology of
\citet{HutzenthalerJentzenKloeden2012}]. In Section~\ref{sectionnumerics},
we give some numerical examples.

%
\section{Preliminaries}
\label{secPreliminaries}

\subsection{Notation}
\label{subsecNotation}

Throughout let us fix $T>0$. We work on a canonical Wiener space
$(\Omega,
\cF, \bP)$ carrying a $d$-dimensional Wiener process $W = (W^1,\break \ldots, W^d)$
restricted to the time interval $[0,T]$. We denote by
$\cF=(\cF_t)_{ t\in[0,T]}$ its natural filtration enlarged in the
usual way
by the $\bP$-zero sets and by $\bE$ and $\bE[\cdot\mid\cF_t]=\bE
_t[\cdot]$ the
usual expectation and conditional expectation operator, respectively.\vspace*{1pt}

For vectors $x =(x^1,\ldots, x^d)$ in the Euclidean space $\bR^d$, we
denote by
$\llvert\cdot\rrvert $ and $\langle\cdot,\cdot\rangle$ the
canonical Euclidean norm and
inner product
(resp.) while $\llVert\cdot\rrVert$ is the matrix
norm in $\bR^{k\times
d}$ (when no ambiguity arises we use $\llvert\cdot\rrvert $
as $\llVert\cdot
\rrVert$); for
$A\in\bR^{k\times d}$ $A^*$ denotes the transpose of $A$; $I_d$
denotes the $d$-dimensional identity matrix. For a map $b\dvtx \bR^m\to\bR
^d$, we
denote by $\nabla b$ its $\bR^{d\times m}$-valued Jacobi matrix
(gradient in
case $d=1$) whenever it
exists. To denote the $j$th first derivative of $b(x)$ for $x\in\bR
^m$, we
write $\nabla_{x_j} b$ (valued in $\bR^{d\times1}$). For
$b(x,y)\dvtx \bR^m\times\bR^d\to
\bR^k$, we write $\nabla_x h$ or $\nabla_y h$ to refer to its Jacobi matrix
(gradient if $k=1$) with relation to $x$~and~$y$, respectively. $\Delta$
denotes the canonical Laplace operator.

We define the following spaces for $p> 1$, $q\geq1$, $n,m,d,k\in\bN$:
$C^{0,n}([0,T]\times\bR^d,\bR^k)$ is the space of continuous functions
endowed with the $\llVert\cdot\rrVert_\infty$-norm
that are $n$-times continuously differentiable in the spatial variable; $C^{0,n}_b$ contains all
bounded functions of $C^{0,n}$; the first superscript $0$ is dropped for
functions independent of time; $L^{p}(\cF_t,\bR^d)$, $t\in[0,T]$, is the
space of $d$-dimensional
\mbox{$\cF_t$-}measurable RVs $X$ with norm
$\llVert X\rrVert_{L^p} = \bE[ \llvert X\rrvert
^p]^{1/p} < \infty$;
$L^\infty$ refers to the subset of essentially bounded RVs;
$\cS^{p}([0,T]\times\bR^d)$ is the space of $d$-dimensional measurable
$\cF$-adapted processes $Y$ satisfying $\llVert Y\rrVert
_{\cS^p} =
\bE[\sup_{t\in[0,T]}\llvert Y_t\rrvert ^p]^{1/p}
<\infty$; $\cS^\infty$ refers to the subset of $\cS^{p}(\bR^d)$ of
absolutely
uniformly bounded processes; $\cH^{p}([0,T]\times\bR^{n\times d})$ is
the space of $d$-\break dimensional
measurable $\cF$-adapted processes $Z$ satisfying $\llVert Z
\rrVert_{\cH^p}=\break \bE[ (\int_0^T\llvert Z_s\rrvert ^2 \,\uds
)^{p/2}]^{1/p}<\infty$;
$\bD^{k,p}(\bR^d)$ and $\bL_{k,d}(\bR^d)$ are the spaces of
Malliavin differentiable RVs and processes; see
Appendix~\ref{appendix-malliavin-calculus}.

%
%

\subsection{Setting}
We want to study the forward--backward SDE system with dynamics
(\ref{canonicSDE})--(\ref{canonicFBSDE}), for $(t,x)\in[0,T]\times
\bR^d$ and
$\Theta^{t,x}:=(X^{t,x},Y^{t,x},Z^{t,x})$.
Here we work, for $s\in[t,T]$, with the filtration $\cF^t_s:=\sigma
(
W_r-W_t\dvtx  r\in[t, s] )$, completed with the $\bP$-null measure
sets of
$\cF$. Concerning the
functions appearing in (\ref{canonicSDE}) and
(\ref{canonicFBSDE}) we will work with the following assumptions.
\begin{longlist}[(HY0$_{\mathrm{loc}}$):]
\item[(HX0).] $b\dvtx [0,T]\times\bR^{d}\to\bR^d$, $\sigma\dvtx [0,T]\times
\bR^{d}\to\bR^{d\times
d}$ are $1/2$-H\"older continuous in their time
variable, are Lipschitz continuous in their spatial variables,
satisfy $\llVert b(\cdot,0)\rrVert_\infty+\llVert
\sigma(\cdot,0)\rrVert_\infty<\infty
$, and hence
satisfy
$\llvert b(\cdot,x)\rrvert +\llvert\sigma(\cdot,x)\rrvert \leq K(1+\llvert x\rrvert )$ for some $K>0$.

\item[(HY0).]
$g\dvtx \bR^d\to\bR^k$ is a Lipschitz function of linear growth;
$f\dvtx [0,T]\times\bR^d\times\bR^k \times\bR^{k\times d} \to\bR^k$
is a
continuous function and for some $L,L_x,L_y,L_z>0$ for all
$t,t',x,x',y,y',z,z'$ it holds that
%
\begin{eqnarray}\label{HY0monotonicity}
\nonumber
\exists m\geq1\qquad\bigl\llvert f(t,x,y,z)\bigr\rrvert&\leq& L+
L_x\llvert x\rrvert+L_y\llvert y\rrvert
^m+L_z\llVert z \rrVert,
\\
\qquad \bigl\langle y'-y, f\bigl(t,x,y',z
\bigr)-f(t,x,y,z) \bigr\rangle& \le& L_y\bigl\llvert
y'-y\bigr\rrvert^2,
\nonumber\\[-8pt]\\[-8pt]\nonumber
\bigl\llvert f(t,x,y,z)-f\bigl(t',x',y,z'
\bigr)\bigr\rrvert&\leq& L_t \bigl\llvert t-t'\bigr
\rrvert^{1/2}
\\
\nonumber
&&{} + L_x\bigl\llvert x-x'\bigr\rrvert
+L_z \bigl\llVert z-z'\bigr\rrVert.
\end{eqnarray}

\item[(HY0$_{\mathrm{loc}}$).] \textup{(HY0)} holds and, given $L_y$, it holds for
all $t,x,y,y',z$ that
%
\begin{equation}
\label{HY0loc-loclip} \bigl\llvert f(t,x,y,z) - f\bigl(t,x,y',z\bigr)
\bigr\rrvert\le L_y \bigl( 1 + \llvert y\rrvert^{m-1} +
\bigl\llvert y'\bigr\rrvert^{m-1} \bigr)\bigl\llvert
y-y'\bigr\rrvert.
\end{equation}

\item[(HXY1).] \textup{(HX0)}, \textup{(HY0$_{\mathrm{loc}}$)} hold; $g\in C^1$ and $b,\sigma, f\in
C^{0,1}$.
\end{longlist}

We state next a useful consequence of the monotonicity
condition (\ref{HY0monotonicity}).

\begin{remark}
\label{rmkoneside}
Under assumption \textup{(HY0)}, for all $t,x,y,y',z,z'$ and any $\alpha>0$,
we have
\begin{eqnarray*}
&& \bigl\langle y' - y, f\bigl(t,x,y',z'
\bigr) - f(t,x,y,z) \bigr\rangle
\\
&& \qquad= \bigl\langle y' - y, f\bigl(t,x,y',z'
\bigr) \pm f\bigl(t,x,y,z'\bigr)- f(t,x,y,z) \bigr\rangle
\\
&& \qquad\le L_y \bigl\llvert y' - y \bigr\rrvert
^2 + L_z \bigl\llvert y' - y \bigr\rrvert
\bigl\llvert z' -z \bigr\rrvert
\\
&&\qquad \le(L_y+\alpha)
\bigl\llvert y' - y \bigr\rrvert^2 + \frac
{L_z^2}{4 \alpha}
\bigl\llvert z' -z \bigr\rrvert^2.
\end{eqnarray*}
Moreover,
%
\begin{eqnarray}\label{remmomenttrick}
&& \bigl\langle y, f(t,x,y,z) \bigr\rangle\nonumber
\\
&&\qquad = \bigl\langle y-0,
f(t,x,y,z)-f(t,x,0,z) \bigr\rangle+ \bigl\langle y, f(t,x,0,z) \bigr
\rangle
\nonumber\\[-8pt]\\[-8pt] \nonumber
&&\qquad  \le L_y \llvert y\rrvert^2 + \llvert y
\rrvert\bigl( L +L_x\llvert x\rrvert+ L_z\llvert z
\rrvert\bigr)
\\
&&\qquad  \le(L_y+\alpha)\llvert y\rrvert^2 +
\frac{3L^2}{4\alpha
} + \frac{3L_x^2}{4\alpha}\llvert x\rrvert^2 +
\frac{3L_z^2}{4\alpha}\llvert z\rrvert^2.\nonumber
\end{eqnarray}
\end{remark}

\subsection{Basic results}
In this subsection, we recall several auxiliary results
concerning the solution of (\ref{canonicSDE})--(\ref{canonicFBSDE})
that will
become useful later. These results follows from \citet{Pardoux1999} and
\citet{BriandCarmona2000}.

%
\begin{theorem}[(Existence and uniqueness)]
\label{theo-existenceuniquenesscanonicalFBSDE}
Let \textup{(HX0)} and \textup{(HY0)} hold. Then FBSDE (\ref{canonicSDE})--(\ref{canonicFBSDE})
has a unique solution $(X,Y,Z) \in\cS^p\times\cS^{p} \times
\cH^{p}$ for any $p\geq2$. Moreover, it holds for some constant
$C_p>0$ that
%
\begin{eqnarray}\label{eqmoment-estim-for-polyy-BSDE}
\llVert Y\rrVert_{\cS^{p}}^{p}
+\llVert Z
\rrVert_{\cH^{p}}^{p} &\leq& C_p \bigl\{ \bigl
\llVert g(X_T)\bigr\rrVert_{L^{p}}^{p} + \bigl
\llVert f(\cdot,X_\cdot,0,0) \bigr\rrVert_{\cH^{p}}^{p}
\bigr\}
\nonumber\\[-8pt]\\[-8pt]\nonumber
&\leq& C_p\bigl(1+\llvert x\rrvert^{p}\bigr).\nonumber
\end{eqnarray}
\end{theorem}

\begin{pf}
The existence and uniqueness results for SDE (\ref{canonicSDE}) follow from
standard SDE literature. The existence and uniqueness result for the BSDE
follows from Proposition 2.2 in \citet{Pardoux1999}, since the SDE results
imply
that $X\in\cS^p$ for any $p \geq2$, along with linear growth in $x$
of $g$
and $f$. The estimates for $Y\in\cS^p$ for any $p\geq2$ and $Z\in
\cH^p$
follow from the pathwise inequality
%
\begin{eqnarray}
\label{eqpathwise-estim-for-polyy-BSDE} && \llvert Y_t\rrvert^{2} +
\biggl(1-
\frac{3L_z^2}{2\alpha} \biggr) \bE_t \biggl[ \int_t^T
\llvert Z_u\rrvert^2\,\udu\biggr]
\nonumber\\[-8pt]\\[-8pt]\nonumber
&& \qquad \leq C_{\alpha,T,t} \bE_t \biggl[
\bigl\llvert g(X_T)\bigr\rrvert^{2} +\int
_t^T \frac{3}{4\alpha} \bigl\llvert
f(u,X_u,0,0)\bigr\rrvert^{2}\,\udu\biggr],
\end{eqnarray}
where $C_{\alpha,T,t}=\exp\{{2(L_y+\alpha)(T-t)}\}$, for any $\alpha
> 0$
and
$t\in[0,T]$. This last inequality follows from the proof of
Proposition 2.2
and
Exercise 2.3 in \citet{Pardoux1999} [see also Theorem 3.6 in
\citet{BriandCarmona2000}].
\end{pf}

We now state a result concerning a priori estimates for BSDEs.

%
\begin{theorem}[(A priori estimate)]
\label{theo-aprioriestimate}
Let $p\geq2$ and for $i\in\{1,2\}$, let $\Theta^i=(X^i,Y^i,Z^i)$ be
the solution
of FBSDE (\ref{canonicSDE})--(\ref{canonicFBSDE}) with functions
$b^i,\sigma^i,g^i,f^i$ satisfying \textup{(HX0)--(HY0)}. Then there
exists $C_p>0$ depending only on $p$ and the constants in the
assumptions such that for $i\in\{1,2\}$
%
\begin{eqnarray}\label{eqapriori-for-diff-of-polyy-BSDE}
&& \bigl\llVert Y^1-Y^2\bigr
\rrVert
_{\cS^{p}}^{p} +\bigl\llVert Z^1-Z^2
\bigr\rrVert_{\cH^{p}}^{p}\nonumber
\\
&&\qquad \leq
C_p \biggl\{ \bE\biggl[ \bigl\llvert g^1
\bigl(X^1_T\bigr)-g^2\bigl(X^2_T
\bigr) \bigr\rrvert^{p}
\\
\nonumber
&&\hspace*{31pt}\quad\qquad{}+ \biggl( \int_0^T
\bigl\llvert f^1\bigl(s,X^1_s,Y^i_s,Z^i_s
\bigr)- f^2\bigl(s,X^2_s,Y^i_s,Z^i_s
\bigr)\bigr\rrvert\,\uds\biggr)^{p} \biggr] \biggr\}.
\end{eqnarray}
\end{theorem}

\begin{pf}
See Proposition 3.2 and Corollary 3.3 in
\citet{BriandCarmona2000}.
\end{pf}

%
\begin{corollary}[(Markov property and sample path continuity)]
\label{coromarkov+cont}
Let \textup{(HX0)} and \textup{(HY0)} hold.
The mapping $(t,x)\mapsto Y^{t,x}_t(\omega)$ is continuous. There
exist two
$\cB([0,T]) \otimes\cB(\bR^k)$ and $\cB([0,T])\otimes
\cB(\bR^{k\times d})$ measurable deterministic functions $u$ and $v$
(resp.) s.t.
%
\begin{eqnarray}
\label{Yismarkovian} Y^{t,x}_s&=&u\bigl(s,X^{t,x}_s
\bigr),\qquad s\in[t,T], \ud\bP\mbox{-a.s.},
\nonumber\\[-8pt]\\[-8pt]\nonumber
Z^{t,x}_s&=& v\bigl(s,X^{t,x}_s
\bigr) \sigma\bigl(s,X^{t,x}_s\bigr),\qquad s\in[t,T], \ud\bP
\times\ud s\mbox{-a.s.}
\end{eqnarray}
Moreover, the Markov property holds
$Y^{t,x}_{t+h}=Y^{t+h,X^{t,x}_{t+h}}_{t+h}$
for any $h\geq0$ and $u \in C^{0,0}([0,T]\times\bR^k)$.
\end{corollary}

\begin{pf}
See Section~3 in \citet{Pardoux1999}. The sample path continuity of
$Y^{t,x}_t$ follows
from the mean-square continuity of $(Y^{t,x}_s)_{s\in[t,T]}$ for $x\in
\bR^k$,
\mbox{$0\leq t\leq s\leq T$}, which in turn follows from inequality
(\ref{eqapriori-for-diff-of-polyy-BSDE}),
combined with the Lipschitz property of $x\mapsto g(x)$ and
$(t,x)\mapsto
f(t,x,\cdot,\cdot)$ along with the continuity properties of
$(t,x)\mapsto X^{t,x}_\cdot$ solution to (\ref{canonicSDE}).

The Markov property follows from Remark 3.1 \citet{Pardoux1999} and the
continuity of $u(t,x)$ is implied by that of $Y^{t,x}_t$.
\end{pf}

\subsection{Nonlinear Feynman--Kac formula}\label{subsec-nonlinearfeymankacformula}

As pointed out in the\break \hyperref[sec1]{Introduction}, our aim is to deepen the connection
between FBSDEs and PDEs via the so-called nonlinear Feynman--Kac
formula, that is,
we study the probabilistic representation of the solution to a
class of parabolic PDEs on $\mathbb{R}^{k}$ with polynomial growth
coefficients that are associated with FBSDE
(\ref{canonicSDE})--(\ref{canonicFBSDE}). For
$(t,x)\in[0,T]\times\bR^d$, denote by $\cL$ the infinitesimal
generator of
the Markov process $X^{t,x}$ solution to (\ref{canonicSDE})
%
\begin{equation}
\cL:=\frac{1}2 \sum_{i,j=1}^d
\bigl(\bigl[\sigma\sigma^*\bigr]_{ij}\bigr) (t,x) \partial_{x_i x_j}^2
+\sum_{i=1}^d b_i(t,x)
\partial_{x_i},
\end{equation}
and consider for a function $v=(v_1,\ldots,v_k)$ the following system of
backward semi-linear parabolic PDEs for $i\in\{1,\ldots,k\}$: $v(T,x)=g(x)$ and
%
\begin{equation}
\label{eqviscosityFBSDE} -\partial_t v_i(t,x) - \cL
v_i(t,x)-f_i \bigl(t,x,v(t,x),(\nabla v \sigma) (t,x)
\bigr)=0. 
\end{equation}
In rough, it can be easily proved using It\^o's formula that if $v\in
C^{1,2}([0,T]\times\bR^d; \bR^k)$ solves the above PDE then
$Y_t:=v(t,X_t)$ and $Z_t:= (\nabla v \sigma)(t,X_t)$ solves BSDE
(\ref{canonicFBSDE}) [see Proposition 3.1 in \citet{Pardoux1999}]. But
the more
interesting result is the converse one, that is, that
$u(t,x):=Y^{t,x}_t$ is the
solution of the PDE (in some sense). It was established in
Theorem 3.2 of \citet{Pardoux1999} (recalled next) that indeed
$(t,x)\mapsto
Y^{t,x}_t$ is the
viscosity solution of the PDE.

\begin{theorem}
\label{theo-viscosity}
Let \textup{(HX0)}, \textup{(HY0)} hold and take $(t,x)\in[0,T] \times\bR^d$.
Furthermore, assume that the $i$th
component of the driver function $f$ depends only on the $i$th row of the
matrix $z\in\bR^{k\times d}$, that is, $f_i(t,x,y,z)=f_i(t,x,y,z^i)$.

Then $u(t,x):=Y^{t,x}_t$ is a continuous function of $(t,x)$ that grows at
most polynomially at infinity and is a viscosity solution of
(\ref{eqviscosityFBSDE}) [in the sense of
Definition~3.2 in \citet{Pardoux1999}].
\end{theorem}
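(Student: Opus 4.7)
The plan has two parts: the regularity of $u$, which follows almost directly from earlier results, and the viscosity property, which I would obtain via a standard perturbation-and-comparison argument originating with Pardoux~\cite{Pardoux1999}.

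Continuity of $u(t,x)=Y^{t,x}_t$ is the content of Corollary~\ref{coro:markov+cont}. The polynomial growth follows from Theorem~\ref{theo-existenceuniquenesscanonicalFBSDE} applied with $p=2$, which yields
\begin{equation*}
|u(t,x)| = |Y^{t,x}_t| \le \|Y^{t,x}\|_{\cS^2} \le C(1+|x|),
\end{equation*}
so $u$ is at most linear, hence polynomial, in $x$.

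For the viscosity property I verify the subsolution inequality componentwise; the supersolution version is symmetric. Fix a component $i$, a point $(t_0,x_0)\in[0,T)\times\bR^d$, and a scalar test function $\varphi\in C^{1,2}$ with $\varphi(t_0,x_0)=u_i(t_0,x_0)$ and $\varphi\ge u_i$ on a neighborhood $\cO$ of $(t_0,x_0)$. Argue by contradiction: suppose
\begin{equation*}
\eta := -\partial_t\varphi(t_0,x_0) - \cL\varphi(t_0,x_0) - f_i\big(t_0,x_0,u(t_0,x_0),(\nabla\varphi\,\sigma)(t_0,x_0)\big) > 0.
\end{equation*}
By continuity of $u$, of the coefficients, and of $\varphi$'s derivatives, the analogous quantity evaluated along $(s,X^{t_0,x_0}_s)$ remains at least $\eta/2$ until the hitting time $\tau$ of $\partial\cO$, which I intersect with $t_0+h$ for some small $h>0$. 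Applying It\^o's formula to $\varphi(s,X^{t_0,x_0}_s)$ on $[t_0,\tau]$ exhibits the pair $(\bar Y,\bar Z) := (\varphi(\cdot,X^{t_0,x_0}_\cdot),(\nabla\varphi\,\sigma)(\cdot,X^{t_0,x_0}_\cdot))$ as the solution of a scalar BSDE with driver $-(\partial_t+\cL)\varphi$ (constant in $(y,z)$). By the Markov property, $Y^{t_0,x_0,i}$ solves on the same interval the $i$-th scalar BSDE of the system, with terminal value dominated by $\bar Y_\tau=\varphi(\tau,X_\tau)$ and driver below $-(\partial_t+\cL)\varphi$ by at least $\eta/2$. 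A scalar BSDE comparison then forces $\varphi(t_0,x_0)>u_i(t_0,x_0)$, contradicting equality at $(t_0,x_0)$.

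The main obstacle is this final comparison step, because $f$ is only monotone with polynomial growth in $y$ rather than globally Lipschitz. The structural assumption that $f_i$ depends only on the $i$-th row $z^i$ is precisely what makes component $i$'s equation a genuine scalar BSDE (with the other $Y^j$, $j\ne i$, frozen at their Markov representations and absorbed into random coefficients); this sidesteps the well-known failure of componentwise comparison for general multidimensional BSDEs. The requisite scalar comparison for monotone, polynomially-growing drivers is the one already established in \cite{Pardoux1999} (whose proof combines the monotonicity in $y$ with a Girsanov-type linearization in $z$), and once it is invoked the contradiction scheme above delivers both viscosity inequalities.
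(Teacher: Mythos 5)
Your argument is correct and is essentially the one the paper relies on: the paper gives no proof of its own but defers to Theorem 3.2 of Pardoux (1999), whose strategy is exactly your test-function/It\^o/comparison contradiction, and your regularity and growth claims follow from Corollary \ref{coro:markov+cont} and Theorem \ref{theo-existenceuniquenesscanonicalFBSDE} as you say. You also correctly identify the one delicate point the paper itself flags in the remark following the theorem, namely that the componentwise comparison only works because $f_i$ depends on $z$ through $z^i$ alone and the remaining components of $y$ can be frozen, which is precisely the content of Theorem 2.4 and Remark 2.5 in Pardoux (1999).
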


%
\begin{remark}[(Multi-dimensional case)]
The proof of Theorem~\ref{theo-viscosity} relies on a BSDE comparison theorem
that holds only in the case $k=1$ (i.e., when $Y$ is one-dimensional).
Nonetheless,
with the restriction imposed by \textup{(HY0)},
it is still possible to use the
said comparison theorem to prove Theorem~\ref{theo-viscosity}, we
point the
reader to Theorem 2.4 and Remark 2.5 in \citet{Pardoux1999}.
\end{remark}

It is possible to show that $(t,x)\mapsto Y^{t,x}_t$ is the solution to
(\ref{eqviscosityFBSDE}) not only in the viscosity sense, but
also in weak sense (in weighted Sobolev spaces), this has been done in
\citet{MatoussiXu2008} and \citet{ZhangZhao2012}.

\subsection{Examples}

One equation covered by our setting is the FitzHugh--Nagumo PDE with
recovery, used in biology and related to the modeling of the electrical
distribution of the heart or the potential in neurons.

\begin{example}[(The FH--N equation with recovery)]
Let $(t,x)\in[0,T]\times\bR^d$, $g=(g_u, g_v)$, $f=(f_u,f_v)$
and $g,f,(u,v)\dvtx [0,T]\times\bR^d\to\bR^2$. The FH--N PDE has the dynamics:
$u(T,\cdot)=g_u(\cdot)$, $ v(T,\cdot)=g_v(\cdot)$ and
\[
-\partial_t u - \tfrac{1}{2}\Delta u- f_u(u,v)=
0,\qquad-\partial_t v - \Delta v-f_v(u,v) = 0,
\]
where $f_u(u,v)=u-u^3+v$ and
$f_v(u,v)= u-v$. $f$ clearly satisfies \textup{(HY0)} and \textup{(HY0$_{\mathrm{loc}}$)}.
\end{example}

A simpler setup of the above model is its one-dimensional version.

\begin{example}[(FH--N equation without recovery)]
\label{exampleFH-N}
For $(t,x)\in[0,T]\times\bR$ the FH--N equation without recovery is
described by
%
\begin{equation}
\label{FH--N} -\partial_t u-\tfrac{1}{2}\Delta u-\bigl(c
u^3+ b u^2 -a u \bigr)=0, \qquad u(T,x)=g(x).
\end{equation}
When $c=-1$, $b=1+a$, $a\in\bR$ and with the choice of
$g(x)= ( 1+ e^x )^{-1}$,
one can verify that the $C^\infty_b$ solution $u$ to (\ref{FH--N}) is
given by
%
\begin{equation}
\label{exampleSOLtoPDE} \quad u(t,x)= \bigl( 1+ \exp\bigl\{ x - (1/2-a)
(T-t) \bigr\}
\bigr)^{-1}\in C^\infty_b\bigl([0,T]\times\bR
\bigr).
\end{equation}
The FBSDE corresponding to this PDE is given by
(\ref{canonicSDE})--(\ref{canonicFBSDE}) with the
following data:
\begin{eqnarray*}
b(\cdot,\cdot)&=&0;\qquad
\sigma(\cdot,\cdot)=1;\qquad
f(t,x,y,z)=cy^3+by^2-ay;\qquad
\\
c&=&-1;\qquad
b=1+a,
\end{eqnarray*}
%
and the terminal condition function $g$ is given
above.
Both \textup{(HX0)} and \textup{(HY0$_{\mathrm{loc}}$)} hold (for any $a$, notice that
$u\geq
0$ for any $a$) and the
theory we develop throughout applies to this class of examples.
We will use the case $a=-1$ in our simulations.
\end{example}

\section{Representation results, path regularity and other properties}\label{secregularity}
As seen before $u(t,x):=Y^{t,x}_t$ is a viscosity solution of PDE
(\ref{eqviscosityFBSDE}). If $u\in C^{1,2}$, we would also
obtain the representation of the process $Z$ as $Z^{t,x}_t=(\nabla_x u
\sigma)(t,x)$,
but in view of Theorem~\ref{theo-viscosity} we have not given meaning
to $\nabla_x u$. The
main aim of this section is to first prove some representation formulas,
that express $Z$ as a function of $Y$ and $X$, then use
these representation formulas to obtain the so-called $L^2$- (and $L^p$-)
path regularity
results needed to prove the convergence of the numerical discretization of
FBSDE (\ref{canonicSDE})--(\ref{canonicFBSDE}) in the later sections. A
by-product of these results is the existence of $\nabla_x u$.

\subsection{Differentiability in the spatial parameter}
Take the system (\ref{canonicSDE})--(\ref{canonicFBSDE}) into
account. We now
show that the smoothness of the FBSDE parameters $b,\sigma,g,f$
carries over
to the solution process $\Theta=(X,Y,Z)$.

%
\begin{theorem}
\label{theoexistencenablay}
Let \textup{(HXY1)} hold and $(t,x)\in[0,T]\times\bR^d$.

Then $u$ [from (\ref{Yismarkovian})] is continuously differentiable in its
spatial
variable. Moreover, the triple $\nabla_x
\Theta^{t,x}=(\nabla_x X^{t,x},\nabla_x Y^{t,x},\nabla_x
Z^{t,x})\in
\cS^p\times\cS^p\times\cH^p$ for any $p\geq2$ and solves for
$0\leq t\leq
s\leq T$
%
\begin{equation}
\label{differentiatedFBSDE} \cases{
\displaystyle\nabla_x X^{t,x}_s = I_d +\int_t^s (
\nabla_x b) \bigl(r, X^{t,x}_r\bigr)
\nabla_x X^{t,x}_r \,\udr
\vspace*{3pt}\cr
\displaystyle \phantom{\nabla_x X^{t,x}_s =}{} +\int
_t^s (\nabla_x \sigma) \bigl(r,
X^{t,x}_r\bigr)\nabla_x X^{t,x}_r\,\udwr,
\vspace*{3pt}\cr
\displaystyle\nabla_{x_i} Y_s^{t,x} = (
\nabla_x g) \bigl(X_T^{t,x}\bigr)
\nabla_{x_i} X^{t,x}_T-\int_s^T
\nabla_{x_i} Z_r^{t,x}\,\ud W_r
\cr
\displaystyle\phantom{\nabla_{x_i} Y_s^{t,x} =}{} +
\int_t^T F\bigl(r,\nabla_{x_i}
\Theta_r^{t,x}\bigr) \,\ud r}
\end{equation}
for $i\in\{1,\ldots,d\}$ and with\footnote{The term $(\nabla_z
f)(\cdot,\Theta) \cdot\Gamma$ can be better understood if one
interprets $z$ in $f$ not as in $\bR^{k\times d}$ but as $(\bR^d)^k$, that is,
$f$ receives not a matrix but its $\bR^d$-valued $k$ lines.}
\begin{eqnarray*}
&& F\dvtx  (\omega,r,x,\chi, \Upsilon, \Gamma)
\\
&&\qquad \mapsto(\nabla_x f )
\bigl(r,\Theta^{t,x}_r\bigr) \cdot\chi+ (
\nabla_y f ) \bigl(r,\Theta^{t,x}_r\bigr)
\cdot\Upsilon+ (\nabla_z f ) \bigl(r,\Theta^{t,x}_r
\bigr) \cdot\Gamma.
\end{eqnarray*}
There exists a positive constant $C_p$ independent of $x$ such that
%
\begin{equation}
\label{estimatenablaynablaz} \sup_{(t,x)\in[0,T] \times\bR^d} \bigl
\llVert\bigl(
\nabla_x Y^{t,x},\nabla_x Z^{t,x}
\bigr)\bigr\rrVert_{\cS
^p\times\cH^p}\leq C_p.
\end{equation}
Furthermore, for $u$ as in (\ref{Yismarkovian}) we have for $x\in\bR
^d$ and
$0\leq t\leq s\leq T$
%
\begin{eqnarray}
\label{repynablaunablax} \nabla_x Y^{t,x}_s&=&(
\nabla_x u) \bigl(s,X^{t,x}_s\bigr)
\nabla_x X^{t,x}_s, \qquad\mbox{$\bP$-a.s.}\quad\mbox{and}
\nonumber\\[-8pt]\\[-8pt]
\nonumber
\llVert\nabla_x u\rrVert_\infty&<&\infty.
\end{eqnarray}
\end{theorem}

We recall that $\nabla_x Y^{t,x}$ is $\bR^{k\times d}$-valued and
$\nabla_{x_i} Y^{t,x}$ denotes its $i$th column we use a similar notation follows
for $\nabla_x X$ and $\nabla_x Z$.

\begin{pf*}{Proof of Theorem \ref{theoexistencenablay}}
Throughout fix $(t,x)\in[0,T]\times\bR^d$ and let
$\{e_i\}_{i\in\{1,\ldots,d\}}$ be the canonical unit vectors of $\bR
^d$. Let
$i\in\{1,\ldots,d\}$.

The results concerning SDE (\ref{canonicSDE}) follow from those in
Section~2.5 in \citet{ImkellerDosReis2010}. We start by showing that
the partial
derivatives $(\nabla_{x_i} Y^{t,x},\nabla_{x_i} Z^{t,x})$ for any
$i$ exist, then we will show the full differentiability. We
start by proving that (\ref{differentiatedFBSDE}) has indeed a
solution for
every $i$. Unfortunately, the driver of~(\ref{differentiatedFBSDE})
does not satisfy \textup{(HY0)}, and hence we cannot quote Theorem~\ref
{theo-existenceuniquenesscanonicalFBSDE} directly; we use a more
general result from \citet{BriandDelyonHuEtAl2003}. We remark though,
that the techniques used to obtain moment estimates of the form of
(\ref{eqmoment-estim-for-polyy-BSDE}) and (\ref
{eqapriori-for-diff-of-polyy-BSDE}) are the same in both
\citet{BriandDelyonHuEtAl2003} and \citet{Pardoux1999}.

FBSDE (\ref{differentiatedFBSDE}) has a unique solution
$\Xi^{t,x,i}:= (\nabla_{x_i} X^{t,x}, U^{t,x,i}, V^{t,x,i}) \in\cS
^p \times
\cS^p \times\cH^p$ for any $p\geq2$, where $(U^i,V^i)$ replaces
$(\nabla_{x_i} Y,\nabla_{x_i} Z)$.
This follows by a direct application of Theorem 4.2 in
\citet{BriandDelyonHuEtAl2003}. It is easy to see that under
\textup{(HXY1)} the conditions (H1)--(H5) in \citeauthor{BriandDelyonHuEtAl2003} [(\citeyear{BriandDelyonHuEtAl2003}), pages~\mbox{118--119}]
are satisfied. First, under \textup{(HXY1)}, standard SDE theory [see, e.g.,
Theorem 2.4 in \citet{ImkellerDosReis2010}] ensures that $\nabla_x
X\in\cS^p$
for all $p\geq2$, which along with $\nabla_x g, \nabla_x f\in
C^{0,0}_b$, implies in turn that the terminal condition $(\nabla_x
g)(X_T^{t,x})\nabla_{x_i} X^{t,x}_T\in L^p_{\cF_T}$ and the term
$ (\nabla_x
f ) (\cdot,\Theta^{t,x}_\cdot) \nabla_{x_i} X^{t,x}_\cdot=
F(\cdot,\nabla_{x_i} X^{t,x}_\cdot,0,0) \in\cS^p$ for any $p\geq
2$. Given
the linearity of $F$ and the Lipschitz property of $f$ in its $z$-variable,
it follows that $F$ is uniformly Lipschitz in
$\Gamma$.\vadjust{\goodbreak} Moreover, since $f$ satisfies
(\ref{HY0monotonicity}) it implies that $F$ is monotone\footnote{This
follows easily from the differentiability of $f$, its monotonicity in
$y$ and
the definition of directional derivative.} in $\Upsilon$, that is,
%
\begin{eqnarray}
\quad&& \bigl\langle\Upsilon-\Upsilon', (\nabla_y f )
\bigl(\cdot,\Theta^{t,x}_\cdot\bigr) \cdot\bigl(\Upsilon-
\Upsilon'\bigr) \bigr\rangle\leq L_y \bigl\llvert
\Upsilon-\Upsilon'\bigr\rrvert^2
\qquad \forall \Upsilon,\Upsilon' \in\bR^k.
\end{eqnarray}
The continuity of $\Upsilon\mapsto F(r,x,\chi,\Upsilon,\Gamma)$ is
also clear.
Finally, the linearity of $F$, the fact that
$\Theta\in\cS^p\times\cS^{p} \times\cH^{p}$ for any $p\geq
2$ and (\ref{HY0loc-loclip}) implies that condition
(H5) in \citet{BriandDelyonHuEtAl2003} is also satisfied, that is, that
for any
$R> 0$, $\sup_{\llvert\Upsilon\rrvert \leq R}\llvert
F(r,x,\nabla_{x_i}X^{t,x}_r,\Upsilon,0)
-F(r,x,\nabla_{x_i}X^{t,x}_r,0,0)\rrvert \in L^1([t,T]\times
\Omega)$.\vspace*{1pt}
We are therefore under the conditions of
Theorem 4.2 in \citet{BriandDelyonHuEtAl2003}, as claimed.

In view of (\ref{remmomenttrick}) and the linearity of $F$ one can
obtain moment estimates in the style of
(\ref{eqmoment-estim-for-polyy-BSDE}) by following arguments similar to
those in the proof of Theorem~\ref{theo-existenceuniquenesscanonicalFBSDE}
[recall that (\ref{remmomenttrick}) takes in this case a very simple form].
In view of (\ref{eqmoment-estim-for-polyy-BSDE}), we have (recall that
$\nabla X\in\cS^p$ for all $p\geq2$)
%
\begin{eqnarray}
\nonumber
\label{eq-boundsUV} && \bigl\llVert U^i\bigr\rrVert
_{\cS^{p}}^{p} +\bigl\llVert V^i \bigr\rrVert
_{\cH^{p}}^{p}
\\
&&\qquad \leq C_p \bigl\{ \bigl\llVert(
\nabla_x g) \bigl(X_T^{t,x}\bigr)
\nabla_{x_i} X^{t,x}_T\bigr\rrVert
_{L^{p}}^{p} + \bigl\llVert(\nabla_x f ) \bigl(
\cdot,\Theta^{t,x}_\cdot\bigr) \nabla_{x_i}
X^{t,x}_\cdot\bigr\rrVert_{\cH^{p}}^{p}
\bigr\}
\\
&& \qquad\leq C_p \bigl\llVert\nabla_{x_i}
X^{t,x}\bigr\rrVert_{\cS^{p}}^{p} \leq
C_p,\nonumber
\end{eqnarray}
where $C_p$ does \emph{not} depend on $x$, $t$ or $i$.

In order to obtain results on the first-order variation of the
solution, we
follow standard BSDE techniques used already in \citet{ImkellerDosReis2010},
\citet{BriandConfortola2008a} or \citet{dosReisReveillacZhang2011}; we
start by studying the behavior of $\Theta^{t,x+\varepsilon e_i}
-\Theta^{t,x}$ for any $\varepsilon>0$. Take
$h\in\bR^d$. Via the stability of SDEs and inequality
(\ref{eqapriori-for-diff-of-polyy-BSDE}) [and \textup{(HY0)}], it is clear
that a
constant $C_p>0$ independent of $x$ exists such that
%
\begin{eqnarray}\label{eqstabilitythetavarepsilontheta}
\lim_{h \to0} \bigl\llVert
\Theta^{t,x+h}
-\Theta^{t,x}\bigr\rrVert_{\cS^p\times
\cS^p\times\cH^p}
&\leq& \lim
_{h\to0} C_p \bigl\llVert X^{x+h}-X^x
\bigr\rrVert_{\cS^p}
\nonumber\\[-8pt]\\[-8pt]\nonumber
&\leq& \lim_{h\to0} C_p \llvert h \rrvert= 0.
\end{eqnarray}
Define
\begin{eqnarray*}
\delta\Theta^{\varepsilon, i} &:=&\bigl(\delta X^{\varepsilon,i},\delta
Y^{\varepsilon,i},\delta Z^{\varepsilon,i}\bigr)
\\
&:=&\bigl(\Theta
^{t,x+\varepsilon
e_i}-
\Theta^{t,x}\bigr)/\varepsilon- \bigl(\nabla_{x_i}
X^{t,x},U^{t,x,i},V^{t,x,i}\bigr)
\end{eqnarray*}
for which
%
\begin{eqnarray}
\label{aux-FBSDEfordifference}
\nonumber
\delta Y^{\varepsilon,i}_s &=& \biggl[
\frac{1}\varepsilon\bigl(g\bigl(X^{t,x+\varepsilon e_i}_T\bigr) - g
\bigl(X^{t,x}_T\bigr) \bigr) -(\nabla_x g)
\bigl(X_T^{t,x}\bigr)\nabla_{x_i}
X^{t,x}_T \biggr] -\int_s^T
\delta Z^{\varepsilon,i}_r\,\udw_r\hspace*{-10pt}
\\
&&{} + \int_s^T \biggl[
\frac{1}\varepsilon\bigl(f\bigl(r,\Theta_r^{t,x+\varepsilon e_i}
\bigr)- f\bigl(r,\Theta_r^{t,x}\bigr) \bigr)
\\
&&\hspace*{33pt}{} - F\bigl(r,x,
\nabla_{x_i} X^{t,x}_r,U^{t,x,i}_r,V^{t,x,i}_r
\bigr) \biggr]\,\ud r.\nonumber
\end{eqnarray}
Using the differentiability of the involved functions, we can
re-write (\ref{aux-FBSDEfordifference}) as a linear
FBSDE with random coefficients satisfying in its essence a \textup{(HY0)} type
assumption: for $s\in[t,T]$, $j\in\{1,\ldots,d\}$
%
\begin{equation}
\label{differencedFBSDE} \qquad\cases{
\displaystyle \delta X^{\varepsilon,j}_s = 0 +
\int_t^s \bigl[ b^{\varepsilon,j}_x(r)
\delta X^{\varepsilon,j}_r + \delta\nabla b^\varepsilon_r
\nabla_{x_j} X^{t,x}_r \bigr] \,\udr
\vspace*{3pt}\cr
\displaystyle\phantom{\delta X^{\varepsilon,j}_s =} {} +\int_t^s \bigl[
\sigma^{\varepsilon,j}_x(r)\delta X^{\varepsilon,j}_r +
\delta\nabla\sigma^\varepsilon_r \nabla_{x_j}
X^{t,x}_r \bigr] \,\udwr,
\vspace*{3pt}\cr
\displaystyle\delta Y^{\varepsilon,i}_s = \bigl[ g^{\varepsilon,i}_x(T)\delta
X^{\varepsilon,i}_T + \delta\nabla g^\varepsilon_T
\nabla_{x_i} X^{t,x}_T \bigr] -\int
_s^T \delta Z^{\varepsilon,i}_r\,\ud
W_r
\vspace*{3pt}\cr
\displaystyle\phantom{\delta Y^{\varepsilon,i}_s =}
{}+\int_s^T
\bigl[f^{\varepsilon,i}_x(r)\delta X^{\varepsilon,i}_r
+f^{\varepsilon,i}_y(r)\delta Y^{\varepsilon,i}_r
+f^{\varepsilon,i}_z(r)\delta Z^{\varepsilon,i}_r
\vspace*{3pt}\cr
\displaystyle\hspace*{107pt}{}+\delta\nabla f^\varepsilon_r \cdot\bigl(
\nabla_{x_i} X^{t,x}_r,U^{t,x,i}_r,V^{t,x,i}_r
\bigr) \bigr] \,\ud r,}
\end{equation}
where $\delta\nabla f$ and $\delta\nabla\varphi$ denote the differences
\[
\delta\nabla f^{\varepsilon}_\cdot:= \bigl( f^{\varepsilon,i}_x,f^{\varepsilon,i}_y, f^{\varepsilon,i}_z \bigr) (
\cdot) - (\nabla_x f,\nabla_y f,\nabla_z
f ) \bigl(\cdot,\Theta^{t,x}_\cdot\bigr)
\]
and
\[
\delta\nabla\varphi^{\varepsilon}_\cdot:= \varphi^{\varepsilon,i}_x
(\cdot) - \nabla_x \varphi\bigl(\cdot,\Theta^{t,x}_\cdot
\bigr),
\]
for $\varphi\in\{b,\sigma,g\}$
(with some abuse of notation) and $r\in[t,T]$,
and where we defined
\begin{eqnarray*}
\varphi^{\varepsilon,i}_x(r)&:=& \int_0^1
(\nabla_x \varphi) \bigl(r,(1-\lambda)X^{t,x}_r
+ \lambda X^{t,x+\varepsilon e_i}_r \bigr)\,\ud\lambda
\\
&=& \int_0^1 (\nabla_x\varphi)
\bigl(r, X^{t,x}_r + \lambda\bigl(X^{t,x+\varepsilon e_i}_r-X^{t,x}_r
\bigr) \bigr)\,\ud\lambda,
\end{eqnarray*}
and $f_*^{\varepsilon,i}$ for $*\in\{x,y,z\}$ in the following
way:
\begin{eqnarray*}
f^{\varepsilon,i}_z(r) &:=& \int_0^1
(\nabla_z f) \bigl(r, X^{t,x+\varepsilon e_i}_r,Y^{t,x+\varepsilon e_i}_r, Z_r^{t,x} + \lambda
\bigl(Z^{t,x+\varepsilon e_i }_r-Z^{t,x}_r\bigr)
\bigr)\,\ud\lambda,
\\
f^{\varepsilon,i}_y(r) &:=& \int_0^1
(\nabla_y f) \bigl(r, X^{t,x+\varepsilon e_i}_r,Y_r^{t,x} + \lambda\bigl(Y^{t,x+\varepsilon e_i }_r-Y^{t,x}_r
\bigr),Z_r^{t,x}\bigr)\,\ud\lambda,
\\
f^{\varepsilon,i}_x(r) &:=& \int_0^1
(\nabla_x f) \bigl(r, X^{t,x}_r + \lambda
\bigl(X^{t,x+\varepsilon e_i}_r-X^{t,x}_r\bigr),Y_r^{t,x},Z_r^{t,x}\bigr)\,\ud
\lambda.
\end{eqnarray*}
The assumptions imply immediately that
$b^{\varepsilon,i}_x,\sigma^{\varepsilon,i}_x,
f^{\varepsilon,i}_x,f^{\varepsilon,i}_z$ are uniformly bounded,
while $f^{\varepsilon,i}_y \in\cS^p$, $p\geq2$ (thanks to
HY$0_{\mathrm{loc}}$). Furthermore, using estimate
(\ref{eqmoment-estim-for-polyy-BSDE}) [along with $\llVert
X^{t,x}\rrVert^p_{\cS
^p}\leq
C_p(1+\llvert x\rrvert ^p)$], (\ref{eq-boundsUV}),
(\ref{eqstabilitythetavarepsilontheta}), the continuity of
$\varphi\in\{b,\sigma,g\}$ and its derivative it is easy to
see that, in combination with the dominated convergence theorem, one has
%
\begin{eqnarray}
\label{eqlineintegraltransstability}
&&\lim_{\varepsilon\to0} \bigl\{ \bigl\llVert
\varphi^{\varepsilon,i}_x(\cdot)-\nabla_x \varphi\bigl(
\cdot,\Theta^{t,x}_\cdot\bigr) \bigr\rrVert_{\cS^p}
\nonumber\\[-8pt]\\[-8pt]\nonumber
&&\hspace*{21pt}{} + \bigl\llVert\bigl( f^{\varepsilon,i}_x,f^{\varepsilon,i}_y, f^{\varepsilon,i}_z \bigr) (
\cdot) -(\nabla_x f,\nabla_y f,\nabla_z
f) \bigl(\cdot,\Theta^{t,x}_\cdot\bigr)\bigr\rrVert
_{\cH^p} \bigr\} =0.
\end{eqnarray}
We remark that in the above limit a localization argument for the convergence
of $f^{\varepsilon,i}_y(\cdot)$ to $\nabla_y f(\cdot,\Theta_\cdot
)$ is
required, namely that we work inside a ball (of any given radius) centered
around $x$ in which all points $x+\varepsilon e_i\in\bR^d$ as
$\varepsilon$
vanishes are contained. We do not detail the argumentation since it is
similar to that given in, for example, \citet{ImkellerDosReis2010},
\citet{BriandConfortola2008a} or \citet{dosReisReveillacZhang2011}.

With this in mind we return to (\ref{aux-FBSDEfordifference}), written
in the
form of (\ref{differencedFBSDE}), and since it is a linear FBSDE satisfying
the monotonicity condition (\ref{HY0monotonicity}) we have via Corollary 3.3
in \citet{BriandCarmona2000} [essentially our moment
estimate (\ref{eqmoment-estim-for-polyy-BSDE}) for FBSDE
(\ref{differencedFBSDE})] in combination with
(\ref{eq-boundsUV}), (\ref{eqstabilitythetavarepsilontheta})
and (\ref{eqlineintegraltransstability}), that for any $i$
\[
\lim_{\varepsilon\to0} \biggl\llVert\frac{1}\varepsilon\bigl(
\Theta^{t,x+\varepsilon e_i}-\Theta^{t,x}\bigr) - \bigl(\nabla_{x_i}
X^{t,x}, U^{t,x,i}, V^{t,x,i}\bigr) \biggr\rrVert
_{\cS^p\times\cS^p\times\cH^{p}} =0 \qquad\forall p\geq2.
\]
Since the limit exists we identify $(\nabla_{x_i}
Y^{t,x},\nabla_{x_i} Z^{t,x})$ with $(U^{t,x,i},V^{t,x,i})$ and, moreover,
estimate (\ref{eq-boundsUV}) implies estimate (\ref{estimatenablaynablaz}).
Furthermore, the above limit implies in particular that (take $s=t$)
\begin{eqnarray*}
\nabla_{x_i} u(t,x) &=& \lim_{\varepsilon\to0}
\frac{1}\varepsilon\bigl[u(t,x+\varepsilon e_i)-u({t,x})
\bigr]
\\
& = &\lim_{\varepsilon\to0} \frac{1}\varepsilon
\bigl[Y^{t,x+\varepsilon e_i}_t-Y^{t,x}_t\bigr] =
\nabla_{x_i} Y^{t,x}_t.
\end{eqnarray*}
Observing that the RHS of (\ref{eq-boundsUV}) is a constant independent
of $t\in[0,T]$, $x\in\bR^d$ and $i\in\{1,\ldots,d\}$ we can
conclude that
%
\begin{equation}
\label{equationuxisbounded} \llVert\nabla_{x_i} u\rrVert_{\infty}
=\sup
_{(t,x)\in[0,T]\times\bR^d} \bigl\llvert\nabla_{x_i} Y^{t,x}_t
\bigr\rrvert<\infty.
\end{equation}

It is clear that $(\nabla_{x_i} Y^{t,x}_s)_{s\in[t,T]}$ is continuous
in its
time parameter as it is a solution to a BSDE; we now focus on the
continuity of $x\mapsto\nabla_{x_i} Y^{t,x}_t$. Let \mbox{$x,x'\in\bR^d$}. The
difference $\nabla_{x_i} Y^{t,x}-\nabla_{x_i} Y^{t,x'}$ is the solution
to a \emph{linear} FBSDE following from (\ref{differentiatedFBSDE}). As
before, it is easy to adapt the computations and apply Corollary 3.3 in
\citet{BriandCarmona2000} [essentially\vspace*{1.5pt} our moment
estimate (\ref{eqapriori-for-diff-of-polyy-BSDE}) for FBSDEs
(\ref{differentiatedFBSDE})] to the difference $\nabla_{x_i}
Y^{t,x}_s-\nabla_{x_i} Y^{t,x'}_s$ yielding
\begin{eqnarray*}
&& \bigl\llVert\nabla_{x_i} Y^{t,x}-\nabla_{x_i}
Y^{t,x'}\bigr\rrVert_{\cS^2}^2
\\
&&\qquad \leq C_p \biggl\{ \bigl\llVert(\nabla_{x} g)
\bigl(X_T^{t,x}\bigr)\nabla_{x_i}
X^{t,x}_T - (\nabla_{x} g) \bigl(X_T^{t,x'}
\bigr)\nabla_{x_i} X^{t,x'}_T \bigr\rrVert
_{L^2}^2
\\
&&\hspace*{51pt}{}+ \bE\biggl[ \biggl(\int_0^T
\bigl\llvert F\bigl(r,x,\nabla_{x_i} X^{t,x}_r,
\nabla_{x_i} Y^{t,x}_r,\nabla_{x_i}
Z^{t,x}_r\bigr)
\\
&&\hspace*{99pt}{}- F\bigl(r,x',
\nabla_{x_i} X^{t,x'}_r,\nabla_{x_i}
Y^{t,x}_r,\nabla_{x_i} Z^{t,x}_r
\bigr) \bigr\rrvert\,\uds\biggr)^p \biggr] \biggr\}.
\end{eqnarray*}
Given the known results on SDEs, the linearity of $F$, (\ref{eq-boundsUV}),
the continuity of the derivatives of $f$ and
(\ref{eqstabilitythetavarepsilontheta}), dominated convergence theorem yields
that $\llVert\nabla_{x_i} Y^{t,x}-\nabla_{x_i} Y^{t,x'}\rrVert_{\cS
^2}^2 \to
0$ as
$x'\to x$ uniformly on compact sets. This mean-square continuity of
$\nabla_{x_i} Y^{t,x}$
implies in particular that $\nabla_{x_i} Y^{t,x}_t=\nabla_{x_i}
u(t,x)$ is
continuous. In conclusion, we just proved that for any
$i\in\{1,\ldots,d\}$ the partial derivatives $\nabla_{x_i} u$ exist
and are
continuous; hence, standard multi-dimensional real analysis implies
that $u$
is continuously differentiable in its spatial variables. This argumentation
is similar to that in the proof of Corollary~\ref{coromarkov+cont}.

We are left to prove (\ref{repynablaunablax}). Note
that for any $\varepsilon>0$ we have
$(Y^{t,x+\varepsilon e_i}_s-Y^{t,x}_s)/\varepsilon=(u(s,X^{t,
x+\varepsilon e_i} _s)-u(s,X^ { t, x } _s))/\varepsilon$. By sending
$\varepsilon\to0$ and using the (continuous) differentiability of
$u$, we
have $\nabla_x Y^{t,x}_s=(\nabla_x u)(s,X^{t,x}_s)\nabla_x
X^{t,x}_s$. Hence, as the RHS of (\ref{eq-boundsUV}) is a constant
independent
of $t\in[0,T]$, $x\in\bR^d$ and $i$ we can conclude (let $s\searrow t$)
that
$\llVert\nabla_x u\rrVert_{\infty}
=\sup_{(t,x)\in[0,T]\times\bR^d} \llvert\nabla_x
Y^{t,x}_t\rrvert <\infty$.
\end{pf*}

\subsection{Malliavin differentiability}

As in the previous section, we show a form of regularity of the solution
$\Theta$ to (\ref{canonicSDE})--(\ref{canonicFBSDE}), namely the stochastic
variation of $\Theta$ in the sense of Malliavin's calculus.

%
\begin{theorem}[(Malliavin differentiability)]
\label{theoexistencemalliavin}
Let \textup{(HXY1)} hold. Then the
solution $\Theta=(X,Y,Z)$ of (\ref{canonicSDE})--(\ref
{canonicFBSDE}) verifies:
\begin{itemize}
\item
$X\in\mathbb{L}^{1,2}$ and
$DX$ admits a version
$(u,t) \mapsto D_u X_t$ satisfying for $0\le u\le t\le T$
\begin{eqnarray*}
D_u X_t &=& \sigma(u, X_{u})+\int
_u^t (\nabla_x b) (s,
X_s) D_u X_s \,\uds+\int_u^t
(\nabla_x \sigma) (s, X_s) D_u
X_s\,\udws.
\end{eqnarray*}
Moreover, for any $p\geq2$ there exists $C_p>0$ such that
%
\begin{eqnarray}
\label{DXSpestimate} \sup_{u\in[0,T]}\llVert D_u X\rrVert
_{\cS^p}^p&\leq& C_p\bigl(1+\llvert x\rrvert
^p\bigr).
\end{eqnarray}
\item
For any $0\leq t\leq T$, $x\in\bR^m$ we have $(Y,Z)\in
\mathbb{L}^{1,2}\times(\mathbb{L}^{1,2} )^d$.
A version of $(D Y,D Z)_{0\leq u,t\leq T}$ satisfies: for $t<u\le T$,
$D_u Y_t=0$ and $D_u Z_t = 0$, and for~$0 \leq u\leq t$,
%
\begin{eqnarray}\label{aux1order-mall-dif-bsde}
D_u Y_t &=& (\nabla_x g)
(X_T) D_u X_T+ \int_t^T
\bigl\langle(\nabla f) (s,\Theta_s), D_u
\Theta_s \bigr\rangle\,\ud s
\nonumber\\[-8pt]\\[-8pt]\nonumber
&&{}  - \int_t^T
D_u Z_s \,\ud W_s.
\end{eqnarray}
Moreover, $(D_t Y_t)_{0\le t
\leq T}$ defined by the above equation is a version of\break $(Z_t)_{0\le
t\leq
T}$.
\item The following representation holds for any $0\leq u\leq t\leq T$ and
$x\in\R^m$:
%
\begin{eqnarray}
\label{repDXnablaXnablaXsigma} D_u X_t &=& \nabla_x
X_t (\nabla_x X_u)^{-1}
\sigma(u, X_u)\1_{[0,u]}(t),
\\
\label{repDYnablaYnablaXsigma} D_u Y_t &=& \nabla_x
Y_t (\nabla_x X_u)^{-1}
\sigma(u,X_u),\qquad\mbox{a.s.},
\\
\label{z-rep} Z_t&=&\nabla_x Y_t (
\nabla_x X_t)^{-1}\sigma(s,X_t),
\qquad\mbox{a.s.}
\end{eqnarray}
\end{itemize}
\end{theorem}

%
\begin{remark}[($Y$ is already in $\bL^{1,2}$)]
\label{remYalreadMalldiff}
Via Theorem~\ref{theoexistencenablay}, we know that $u\in
C^{0,1}$. Under \textup{(HXY1)} it is known that $X\in
\bL^{1,2}$ [see \citet{nualart2006}], hence using the chain rule [for
Malliavin calculus, see Proposition 1.2.3 in \citet{nualart2006}] we
obtain $Y_\cdot=u(\cdot,X_\cdot)\in
\bL_{1,2}$. A careful analysis of Theorem~\ref{theoexistencenablay}
and the
results about $\nabla_x u$ show that indeed $X,Y\in\bL^{1,p}$ for
all $p\geq2$
[just combine (\ref{DXSpestimate}) with (\ref{jensenforMallCalculus}) as
described in Appendix~\ref{appendix-malliavin-calculus}].

Using the fact that $X,Y\in\bL^{1,2}$, the statement of Theorem
\ref{theoexistencemalliavin} follows easily if the driver $f$ in
(\ref{canonicFBSDE}) does not depend on $z$. One would argue in the
following way: for any $t\in[0,T]$
\begin{eqnarray*}
&& \biggl(g(X_T) -Y_t + \int
_t^T f(r,X_r,Y_r)
\,\udr\biggr)_{t\in[0,T]} \in\bL^{1,2}
\\
&&\qquad  \Rightarrow\biggl(\int_t^T
Z_r\,\udwr\biggr)_{t\in[0,T]} \in\bL^{1,2}
\Leftrightarrow Z\in\bL^{1,2},
\end{eqnarray*}
this follows from the definition of the BSDE (\ref{canonicFBSDE})
itself and
Theorem~\ref{malldiffstochintegrals}. The dynamics of
(\ref{aux1order-mall-dif-bsde}) and the representation formulas
(\ref{repDYnablaYnablaXsigma}), (\ref{z-rep}) follow by arguments
similar to
those given below.
\end{remark}

\begin{pf*}{Proof of Theorem~\ref{theoexistencemalliavin}}
The first part of the statement is trivial as it follows from standard SDE
theory; see, for example, \citet{nualart2006} or Theorem 2.5 in
\citet{ImkellerDosReis2010}. To prove the other statements of the
theorem, we
will use
an identification trick by taking advantage of the fact we already know that
$Y\in\bL^{1,2}$ (see Remark~\ref{remYalreadMalldiff}).

Let $(X,Y,Z)$ be the solution of (\ref{canonicSDE})--(\ref
{canonicFBSDE}) and
define the following BSDE:
%
\begin{equation}
\label{auxTrickidentifiedBDSE} U_t= g(X_T)+\int_t^T
\widehat{f}(r,V_r)\,\udr- \int_t^T
V_r\,\udwr,
\end{equation}
where the driver $\widehat{f}\dvtx \Omega\times[0,T]\times\bR^d\to\bR
$ is
defined as
%
\begin{equation}
\widehat{f}(t,v):=f(t,X_t,Y_t,v)=f
\bigl(t,X_t,u(t,X_t),v \bigr).
\end{equation}
It\vspace*{2pt} is clear that: $g(X_T)\in\bD^{1,2}$, $f(\cdot,X_\cdot,Y_\cdot,0)\in
\bL^{1,p}$ for all $p\geq2$ (see Remark~\ref{remYalreadMalldiff})
and that
$v\mapsto\widehat{f}(\cdot,v)$ is a Lipschitz continuous function,
all these
imply in particular via Lipschitz BSDE theory [see
Theorem 2.1, Proposition 2.1 in \citet{ElKarouiPengQuenez1997}] that there
exists a pair $(U,V)\in\cS^2\times\cH^2$ solving
(\ref{auxTrickidentifiedBDSE}). Furthermore, Theorem
\ref{theo-existenceuniquenesscanonicalFBSDE} in
\citet{ElKarouiPengQuenez1997} states that the solution to
(\ref{canonicFBSDE}) is unique, and hence the solution of
(\ref{auxTrickidentifiedBDSE}) verifies $(U,V)=(Y,Z)$.

Proposition 5.3 in \citet{ElKarouiPengQuenez1997}, yields the existence
of the Malliavin derivatives $(DU,DV)$ of $(U,V)$ with the following
dynamics.
Set $\Xi:=(X,Y,V)$, then for $t<u\le T$ we have $D_u U_t = 0$, $D_u
V_t =
0$\vadjust{\goodbreak} and for $0 \leq u\leq t$
\begin{eqnarray*}
D_u U_t &=& (\nabla_x g) (X_T)
D_u X_T + \int_t^T
\bigl\langle(\nabla f) (s,\Xi_s), (D_u \Xi_s)
\bigr\rangle\,\ud s - \int_t^T D_u
V_s \,\ud W_s.
\end{eqnarray*}
Since $(U,V)=(Y,Z)$ then from the above BSDE for $(DU,DV)$ follows BSDE~(\ref{aux1order-mall-dif-bsde}). Moreover, Proposition 5.9 in
\citet{ElKarouiPengQuenez1997} yields (\ref{repDYnablaYnablaXsigma}) and
(\ref{z-rep}) for $(U,V)$ which carry out for $(Y,Z)$.
\end{pf*}

\subsection{Representation results}
Here, we combine the results of the two previous subsections to obtain
representation formulas that will allow us to establish the path regularity
properties of $Y$ and $Z$ required for the convergence proof of the numerical
discretization.

\begin{theorem}
\label{theo-ZinSp}
Let $\textup{(HXY1)}$ hold, then the following representation holds:
%
\begin{eqnarray}
\label{eqrepZNo1} Z^{t,x}_s &=& (\nabla_x u
\sigma) \bigl(s,X^{t,x}_s\bigr),\qquad  0\leq t\leq s\leq T, \ud\bP\mbox{-a.s.},
\\
\label{eqrepZNo2} &=& \nabla_x Y^{t,x}_s \bigl(
\nabla_x X^{t,x}_s \bigr)^{-1}\sigma
\bigl(s,X^{t,x}_s\bigr),\qquad  0\leq t\leq s\leq T, \ud
\bP\mbox{-a.s.},
\end{eqnarray}
and $\llVert Z\rrVert_{\cS^q}^q\leq C_q(1+\llvert
x\rrvert ^q)$, $q\geq2$.

Assume that only \textup{(HX0)} and \textup{(HY0$_{\mathrm{loc}}$)} hold, then for some
$C>0$ it holds
$\llvert Z_t\rrvert \leq C\llvert\sigma(X_t)\rrvert$ $\udt\otimes
\ud\bP$-a.s. and in particular
%
\begin{equation}
\label{ZSpestimate} \llvert Z_t\rrvert\leq C\bigl(1+\llvert
X_t\rrvert\bigr),\qquad\udt\otimes\ud\bP\mbox{-a.s.}
\end{equation}
\end{theorem}

\begin{pf}
We first prove all the results under \textup{(HXY1)}, then argue via mollification
that (\ref{ZSpestimate}) holds under \textup{(HX0)}--(HY$0_{\mathrm{loc}})$.
\begin{longlist}
\item[\emph{Proof under} \textup{(HXY1)}.]
The representation $Z=\nabla Y (\nabla X)^{-1}\sigma(\cdot,X)$
follows from
Theorem~\ref{theoexistencemalliavin}, while from Theorem
\ref{theoexistencenablay}, we have
\begin{eqnarray*}
Z^{t,x}_s &=& \nabla_x Y^{t,x}_s
\bigl(\nabla_x X^{t,x}_s\bigr)^{-1}
\sigma\bigl(s,X^{t,x}_s\bigr)
\\
&=& (\nabla_x u) \bigl(s,X^{t,x}_s\bigr)
\bigl(\nabla_x X^{t,x}_s \bigl(
\nabla_x X^{t,x}_s \bigr)^{-1} \bigr)
\sigma\bigl(s,X^{t,x}_s\bigr)
\\
&=&(\nabla_x u) \bigl(s,X^{t,x}_s\bigr) \sigma
\bigl(s,X^{t,x}_s\bigr).
\end{eqnarray*}
Since all the involved processes (in the RHS) are continuous, we can
identify $Z$ with its continuous version. Moreover, as all the
processes in
the
RHS belong to $\cS^p$ for all $p\geq2$ it follows that $Z\in\cS^p$
for all
$p\geq2$. Combining H\"older's inequality with the fact that $X,\nabla
X\in
\cS^p$ for all $p\geq2$ and estimate
(\ref{estimatenablaynablaz}), leads to (\ref{ZSpestimate}), that is,
%
\begin{eqnarray}
\nonumber
\llVert Z\rrVert_{\cS^p} &=& \bigl\llVert
\nabla_x Y^{t,x}_\cdot\bigl(\nabla_x
X^{t,x}_\cdot\bigr)^{-1}\sigma\bigl(
\cdot,X^{t,x}_\cdot\bigr)\bigr\rrVert_{\cS^p}
\\
\label{eqzinSpaux01}
&\leq& C_p \bigl\llVert
\nabla_x Y^{t,x}\bigr\rrVert_{\cS^{3p}} \bigl\llVert(
\nabla_x X)^{-1}\bigr\rrVert_{\cS^{3p}} \bigl\llVert
1+X^{t,x}\bigr\rrVert_{\cS^{3p}}
\\
&\leq& C_p\bigl(1+\llvert x\rrvert\bigr).\nonumber
\end{eqnarray}
A careful inspection of the used inequalities shows that the constant $C_p$
in~(\ref{eqzinSpaux01}) depends only on the several constants
appearing in
the assumptions \textup{(HX0)}--\textup{(HY0$_{\mathrm{loc}}$)}.
\end{longlist}

\begin{longlist}
\item[\emph{Proof of} (\ref{ZSpestimate}) \emph{under} \textup{(HX0)}--(HY0${}_{\mathrm{loc}}$).]
In this step, we rely on a standard mollification arguments similar to
those in
the proof of Theorem 5.2 in \citet{ImkellerDosReis2010}. Note that a driver
satisfying \textup{(HY0$_{\mathrm{loc}}$)} once mollified will still satisfy
assumption
\textup{(HY0$_{\mathrm{loc}}$)} with the same constants.

Take $b^n,\sigma^n,g^n,f^n$ as mollified versions of $b,\sigma,g,f$ in
their spatial variables such that the mollified functions satisfy uniformly
(in $n$) \textup{(HX0)} and \textup{(HY0$_{\mathrm{loc}}$)}, with uniform Lipschitz and
monotonicity constants. Theorem~\ref{theo-existenceuniquenesscanonicalFBSDE}
ensures that $\Theta=(X^n,Y^n,Z^n)\in\cS^p\times\cS^p\times\cH
^p$ for any
$p\geq2$ and solves (\ref{canonicSDE})--(\ref{canonicFBSDE}) with
$b^n,\sigma^n,g^n,f^n$ replacing $b,\sigma,g,f$. Since the mollified
functions satisfy \textup{\textup{(HXY1)}}, it follows from the above proof that for each
fixed $n$ we have $Z^n \in\cS^p$. Moreover, in view of
(\ref{eqapriori-for-diff-of-polyy-BSDE}) and the standard theory of
SDEs it
is rather simple to deduce that $\Theta^n\to\Theta$ as $n\to\infty
$ in
$\cS^{p}\times\cS^p\times\cH^{p }$ for all $p\geq2$. Let $u^n$
denote the solution to the PDE linked to FBSDE (\ref
{canonicSDE})--(\ref{canonicFBSDE}) with data $b^n,\sigma^n,g^n,f^n$
and we drop the superscript $(t,x)$ and work with $(X^n,Y^n,Z^n)$.

From (\ref{eqrepZNo1}), we have
$\llvert Z^{n}_s\rrvert =\llvert (\nabla_x u^n
\sigma^n )(s,X^{n}_s)\rrvert $ at least
$\uds\otimes\ud\bP$-a.s.
From (\ref{equationuxisbounded}) [or (\ref{estimatenablaynablaz})],
we can conclude that $\llvert\nabla_x Y^{t,x,n}_t\rrvert =
\llvert\nabla_x u^n(t,x)\rrvert \leq C$,
with $C$ independent of $n$, and hence quite easily that
%
\begin{equation}
\label{aux-upper-bound-for-z} \bigl\llvert Z^{n}_s\bigr\rrvert\leq C
\bigl\llvert\sigma^n\bigl(s,X^{n}_s\bigr)
\bigr\rrvert\leq C \bigl(1+\bigl\llvert X^{n}_s\bigr
\rrvert\bigr), \qquad\uds\otimes\ud\bP\mbox{-a.s.},
\end{equation}
where we last used the linear growth condition of $\sigma^n$.

Finally combine: the pointwise convergence of $\sigma^n\to\sigma$
(knowing that all $\sigma^n$ and $\sigma$ have the same Lipschitz
constant); the fact that $X^n\to X$ in $\cS^p$ (standard SDE stability
theory); and
Theorem~\ref{theo-aprioriestimate} yielding that $Z^n \to Z$ in $\cH
^p$ to conclude that (\ref{aux-upper-bound-for-z}) holds in the limit.\quad\qed
\end{longlist}\noqed
\end{pf}

%
%
%

\subsection{Path regularity results}
Now let $\pi$ be a partition of the interval $[0,T]$, say
$0=t_0<\cdots<t_i<\cdots<T_N=T$, and mesh
size $\llvert\pi\rrvert = \max_{i=0,\ldots,N-1} (\tip
-\ti)$. Given $\pi$, define
$r_\pi= \llvert\pi\rrvert / (\min_{i=0,\ldots,N-1}
(\tip-\ti) )$.

Let $Z$ be the control process in the solution to BSDE
(\ref{canonicFBSDE}), under \textup{(HX0)--(HY0)}. We define a set of random variables
$\{\bar Z_\ti\}_{\ti\in\pi}$ termwise given by
%
\begin{eqnarray}
\label{Z-bar-ti-pi} \bar{Z}_\ti&=&\frac{1}{\tip-\ti}\bE\biggl[\int
_\ti^\tip Z_s\,\uds\Big|
\cF_\ti\biggr],\qquad0\le i\le N-1\quad\mbox{and}
\nonumber\\[-8pt]\\[-8pt]\nonumber
\bar Z_\tN&=&Z_T.
\end{eqnarray}

The RV $Z_T$ can be obtained using
(\ref{eqrepZNo1}), namely $Z_T=(\nabla_x g)(X_T)\sigma(T,X_T)$
when $g\in C^1$. If $g$ is only Lipschitz continuous then one easily sees
that
a RV $G\in L^\infty(\cF_T)$ exists such that $Z_T=G \sigma(T,X_T)$.
In any
case, under \textup{(HX0)} and \textup{(HY0)} it easily follows that
%
\begin{eqnarray}\label{boundforZT}
\bar{Z}_\tN&=& Z_T \in L^p(
\cF_T)\qquad \mbox{for any } p\geq2 \quad\mbox{and}
\nonumber\\[-8pt]\\[-8pt]\nonumber
\bar{Z}_\ti&\in& L^2\qquad\mbox{for any }\ti\in\pi.
\end{eqnarray}
It is not difficult to show that $\bar{Z}_\ti$ is
the best $\cF_\ti$-measurable square integrable RV approximating $Z$ in
$\cH^2([\ti,\tip])$, that is,
%
\begin{equation}
\label{eqbarZbestH2approx} \bE\biggl[ \int_\ti^\tip\llvert
Z_s-\bar{Z}_\ti\rrvert^2 \,\uds\biggr] = \inf
_{\xi\in L^2(\Omega,\cF_\ti)} \bE\biggl[\int_\ti^\tip
\llvert Z_s-\xi\rrvert^2 \,\uds\biggr].
\end{equation}
Let now $\bar{Z}_t:= \bar{Z}_\ti$ for $t\in[\ti, \tip)$, $0\le
i\le N-1$.
It is equally easy to see that $\bar Z$ converges to $Z$ in $\cH^2$ as
$\llvert\pi\rrvert $ vanishes: since $Z$ is adapted, the
family of
processes $Z^\pi$ indexed by our partition defined by $Z^\pi_t=Z_\ti
$ for
$t\in
[\ti,\tip)$ converges to
$Z$ in $\cH^2$ as $\llvert\pi\rrvert $ goes to zero. Since
$\{\bar Z\}$ is the best
$\cH^2$-approximation of $Z$, we obtain
\[
\llVert Z-\bar Z \rrVert_{\cH^2} \leq\bigl\llVert Z-Z^\pi
\bigr\rrVert_{\cH^2}\to0\qquad\mbox{as } \llvert\pi\rrvert\to0,
\]
although without knowing the rate of this convergence.

The next result expresses the modulus of continuity (in the time variable)
for
$Y$ and $Z$.

%
\begin{theorem}[(Path regularity)]
\label{thregularity}
Let \textup{(HX0)}, \textup{(HY0$_{\mathrm{loc}}$)} hold. Then the unique
solution $(X,Y,Z)$ to
(\ref{canonicSDE})--(\ref{canonicFBSDE}) satisfies
$(X,Y,Z)\in\cS^{p}\times\cS^p\times\cH^{p }$ for all $p\geq2$. Moreover:
\begin{longlist}[(iii)]
\item[(i)]
for any $p\geq2$ there exists a constant $C_p>0$ such that for $0\leq
s\leq t\leq T$
we have
%
\begin{equation}
\label{pathregY} \bE\Bigl[\sup_{s\leq u\leq t} \llvert
Y_u-Y_s\rrvert^p \Bigr]\leq C_p
\bigl(1+\llvert x\rrvert^p\bigr)\llvert t-s\rrvert^{p/2};
\end{equation}
\item[(ii)]
for any $p\geq2$ there exists a constant $C_p>0$ such that for any
partition $\pi$ of $[0,T]$ with mesh size $\llvert\pi\rrvert $
%
\begin{eqnarray}
\label{pathregZ}
\nonumber
\qquad&& \sum_{i=0}^{N-1}
\bE\biggl[ \biggl(\int_{t_i}^{t_{i+1}}\llvert
Z_t-Z_{t_i}\rrvert^2\,\udt
\biggr)^{p/2} + \biggl(\int_{t_i}^{t_{i+1}}
\llvert Z_t-Z_{\tip}\rrvert^2\,\udt
\biggr)^{p/2} \biggr]
\nonumber\\[-8pt]\\[-8pt]\nonumber
&&\qquad \leq C_p \bigl(1+\llvert x\rrvert^{p}
\bigr) \llvert\pi\rrvert^{p/2};
\end{eqnarray}
\item[(iii)] in particular, there exists a constant $C$ such that for any
partition $\pi= \{0=t_0<\cdots<t_N=T\}$ of the interval $[0,T]$ with mesh
size
$\llvert\pi\rrvert$ we have
\begin{eqnarray*}
\mathrm{REG}_\pi(Y)^2&:=& \max
_{0\leq i\leq N-1} \sup_{ t\in[\ti,\tip]} \bigl\{ \bE\bigl[ \llvert
Y_t-Y_\ti\rrvert^2 \bigr] + \bE\bigl[
\llvert Y_t-Y_\tip\rrvert^2 \bigr] \bigr\}
\\
&\leq& C \llvert\pi\rrvert
\end{eqnarray*}
and $
\sum_{i=0}^{N-1}
\bE[\int_\ti^\tip\llvert Z_s-\bar{Z}_\ti\rrvert
^2 \,\uds
]\leq C \llvert\pi\rrvert$. Moreover, if $r_{\pi}$
remains bounded\footnote{
This is trivially satisfied for the uniform grid for which $r_\pi=1$.}
as $\llvert\pi\rrvert \rightarrow0$ then
\begin{eqnarray*}
\mathrm{REG}_\pi(Z)^2&:=& \sum
_{i=0}^{N-1} \bE\biggl[\int_\ti^\tip
\llvert Z_s-\bar{Z}_\ti\rrvert^2 \,\uds\biggr]
\\
&&{} + \sum_{i=0}^{N-1} \bE
\biggl[\int_\ti^\tip\llvert Z_s-
\bar{Z}_\tip\rrvert^2 \,\uds\biggr]
\\
&\leq& C \llvert\pi
\rrvert.
\end{eqnarray*}
\end{longlist}
\end{theorem}

\begin{pf}
Fix $(t,x)\in[0,T]\times\bR^d$, take $s\in[t,T]$ and throughout
this proof
we
work with $\Theta^{t,x}$ and $\nabla_x \Theta^{t,x}$; to avoid a notational
overload we omit the super- and subscript and write $\Theta$ and
$\nabla
\Theta$.
Under the theorem's assumptions, $(X,Y,Z)\in\cS^{p}\times\cS
^p\times\cH^{p }$ for all $p\geq2$ and (\ref{ZSpestimate}) holds.
We first prove points (i)~and~(ii) under assumption \textup{\textup{(HXY1)}}, then we use the
same mollification argument as in the proof of (\ref{ZSpestimate}) to recover
the case \textup{(HX0)}--\textup{(HY0$_{\mathrm{loc}}$)}. We then explain how (iii) is
obtained.

\begin{longlist}
\item[\emph{Proof of} (i) \emph{under} \textup{(HXY1)}.] From Theorem~\ref{theo-ZinSp}
follows $Z\in\cS^q$ for any $q\geq2$.
Writing the BSDE for the difference $Y_u-Y_s$ for $0\leq s\leq u\leq
T$, we have
\begin{eqnarray*}
Y_u-Y_s &=&\int_s^u
f(r,\Theta_r)\,\udr-\int_s^u
Z_r\,\udwr
\\
&\leq&\int_s^u K \bigl(1+\llvert
X_r \rrvert+\llvert Y_r\rrvert^m+
\llvert Z_r\rrvert\bigr)\,\udr-\int_s^u
Z_r\,\udwr.
\end{eqnarray*}
Taking absolute values, the $\sup$ over $u\in[s,t]\subseteq[0,T]$, power
$p$, expectations and Jensen's inequality leads, for some constant
$C_p>0$, to
\begin{eqnarray*}
&& \bE\Bigl[\sup_{u\in[s,t]}\llvert Y_u-Y_s
\rrvert^p \Bigr]
\\
&&\qquad \leq C_p \biggl\{\llvert t-s\rrvert^p \bigl( 1+\bigl
\llVert(X,Y,Z)\bigr\rrVert_{\cS^p\times\cS^p \times\cS^p}^p \bigr)
+\bE\biggl[
\sup_{u\in[s,t]}\biggl\llvert\int_s^u
Z_r\,\udwr\biggr\rrvert^p \biggr] \biggr\}.
\end{eqnarray*}
Applying BDG to the last term in the
RHS, then (\ref{ZSpestimate}) yields
\begin{eqnarray*}
&& \bE\biggl[ \sup_{u\in[s,t]}\biggl\llvert\int
_s^u Z_r\,\udwr\biggr\rrvert
^p \biggr]
\\
&&\qquad \leq C_p \bE\biggl[ \biggl(\int
_s^t \llvert Z_r\rrvert
^2\,\ud r \biggr)^{p/2} \biggr]
\\
&&\qquad \leq C_p \bE\biggl[ \biggl(\int_s^t
\llvert1+X_r\rrvert^2\,\ud r \biggr)^{p/2}
\biggr] \leq C_p \llvert t-s\rrvert^{p/2} \llVert X
\rrVert_{\cS^p}^p.
\end{eqnarray*}
It then follows that
\[
\bE\Bigl[\sup_{u\in[s,t]}\llvert Y_u-Y_s
\rrvert^p \Bigr] \leq C_p \bigl\{\llvert t-s\rrvert
^p+\llvert t-s\rrvert^{p/2} \bigr\} \leq
C_p\bigl(1+\llvert x \rrvert^{p}\bigr)\llvert t-s\rrvert
^{p/2}.
\]
\end{longlist}

\begin{longlist}
\item[\emph{Proof of} (ii) \emph{under} \textup{(HXY1)}.]
To prove the desired inequality, we use
the representation (\ref{z-rep}) [alternatively (\ref{eqrepZNo2})].
We first
estimate the difference $\bE[ (\int_\ti^\tip
\llvert Z_s-Z_\ti\rrvert ^2\,\uds)^{p/2}]$. The
difference $Z_s-Z_\ti$ can be written
as $Z_s-Z_\ti=I_1+I_2$ with $I_2:= (\nabla Y_s -\nabla
Y_\ti)
(\nabla X_\ti)^{-1}\sigma(\ti,X_\ti)$ and
\begin{eqnarray*}
I_1&:=& \nabla Y_s \bigl\{ \bigl( (\nabla X_s
)^{-1} - (\nabla X_\ti)^{-1} \bigr)
\sigma(s,X_s) + (\nabla X_\ti)^{-1} \bigl[
\sigma(s,X_s)-\sigma(\ti,X_\ti) \bigr] \bigr\}.
\end{eqnarray*}
The estimation of $I_1$ is rather easy as it relies on H\"older's inequality
combined with (\ref{estimatenablaynablaz}), \textup{(HX0)}, Theorems 2.3 and
2.4 in \citet{ImkellerDosReis2010} [see proof of Theorem 5.5(i) in
\citet{ImkellerDosReis2010}], in short we have
\[
\bE\bigl[\llvert I_1\rrvert^p\bigr]\leq C_p
\bigl(1+\llvert x \rrvert^p\bigr)\llvert\pi\rrvert^{p/2}.
\]
Concerning the second part, the estimation of $I_2$, it follows from an
adaptation of the proof of Theorem 5.5(ii) in \citet{ImkellerDosReis2010a}.
We
reformulate the main argument and skip the obvious details. Let us start
with
a simple trick, as $s\in[\ti,\tip]$,
%
\begin{eqnarray}\label{eqauxtrickforpathreg}
\nonumber
&& \bE\bigl[ \bigl\llvert(\nabla Y_s-\nabla
Y_\ti) (\nabla X_\ti)^{-1} \sigma(
\ti,X_\ti)\bigr\rrvert^{p} \bigr]
\nonumber\\[-8pt]\\[-8pt]\nonumber
&&\qquad= \bE\bigl[ \bE
\bigl[ \llvert\nabla
Y_s-\nabla Y_\ti\rrvert^{p}\mid
\cF_\ti\bigr] \bigl\llvert(\nabla X_\ti)^{-1}
\sigma(\ti,X_\ti)\bigr\rrvert^{p} \bigr].
\end{eqnarray}
Writing the BSDE for the difference $\nabla Y_s-\nabla Y_\ti$ for $\ti
\leq s
\leq\tip$, we have for some constant $C>0$
\[
\bE\bigl[ \llvert\nabla Y_s-\nabla Y_\ti\rrvert
^{p}\mid\cF_\ti\bigr] \leq C \bE[
\widehat{I}_{[\ti,\tip]}\mid\cF_\ti],
\]
where
\[
\widehat{I}_{[\ti,\tip]} := \biggl(\int_\ti^\tip
\bigl\llvert(\nabla f) (r,\Theta_r)\bigr\rrvert\llvert\nabla
\Theta_r\rrvert\,\ud r \biggr)^{p} + \biggl(\int
_\ti^\tip\llvert\nabla Z_r\rrvert
^2\,\ud r \biggr)^{p/2},
\]
where we used the conditional BDG inequality and maximized
over the time interval $[\ti,\tip]$.

Combining these last two inequalities and observing that since $\nabla
X_\ti$
and $\sigma(X_\ti)$ are $\cF_\ti$-adapted, we can drop the conditional
expectation from (\ref{eqauxtrickforpathreg}). Hence, for some $C>0$,
\begin{eqnarray*}
&& \sum_{i=0}^{N-1}  \bE\biggl[ \biggl(\int
_\ti^\tip\llvert I_2\rrvert
^2\,\uds\biggr)^{p/2} \biggr]
\\
&&\qquad \leq C \llvert\pi\rrvert^{p/2-1} \sum_{i=0}^{N-1}
\int_\ti^\tip\bE\bigl[ \llvert I_2
\rrvert^{p} \bigr]\,\uds
\\
&&\qquad \leq C \llvert\pi\rrvert^{p/2-1} \sum_{i=0}^{N-1}
\llvert\pi\rrvert\bE\bigl[ \bigl\llvert(\nabla X_\ti)^{-1}
\sigma(\ti,X_\ti)\bigr\rrvert^{p} \widehat{I}_{[\ti,\tip]}
\bigr]
\\
&&\qquad \leq C \llvert\pi\rrvert^{p/2} \bE\Biggl[ \sup_{0\leq t\leq T}
\bigl\llvert(\nabla X_t)^{-1}\sigma(t,X_t)
\bigr\rrvert^{p} \sum_{i=0}^{N-1}
\widehat{I}_{[\ti,\tip]} \Biggr]
\\
&&\qquad \leq C \llvert\pi\rrvert^{p/2} \bigl\llVert(\nabla
X)^{-1} \bigr\rrVert_{\cS^{3p}}^{1/3} \llVert1+X
\rrVert_{\cS^{3p}}^{1/3} \llVert\widehat{I}_{[0,T]}
\rrVert_{L^1}
\\
&&\qquad \leq C \bigl(1+\llvert x \rrvert^p\bigr)
\llvert\pi\rrvert^{p/2}.
\end{eqnarray*}
The last line follows from standard inequalities (sum of powers is less than
the power of the sum), the growth conditions on
$\nabla f$ and the fact that for any $q\geq2$ we
have: $X,\nabla X,(\nabla X)^{-1}\in\cS^{q}$, $Y,\nabla Y \in\cS
^{q}$, (\ref{ZSpestimate}) and $\nabla Z \in\cH^{q}$.

Collecting now the estimates, we obtain the desired result for the difference
$Z_s-Z_\ti$. To have the same estimate for the difference $Z_s-Z_\tip
$ we need
only to repeat the above calculations with a minor change in order to
incorporate the $Z_\tip$: one writes $Z_s-Z_\tip$ with the help of
$I_1^\ip$
and $I_2^\ip$, which are $I_1$ and $I_2$, respectively, but with $\tip$ instead
of $\ti$. The estimate for $I_1^\ip$ follows from SDE theory in the same
fashion as for $I_1$ above; concerning $I_2^\ip$ one just needs
another small
trick,
%
\begin{eqnarray}
\nonumber
I_2^\ip&=& (\nabla Y_s -\nabla
Y_\tip) (\nabla X_\tip)^{-1}\sigma(
\tip,X_\tip)
\\
\label{eqmodcontzipeq1} &\leq&\bigl(\llvert\nabla Y_s\rrvert+\llvert
\nabla
Y_\tip\rrvert\bigr) \bigl[ (\nabla X_\tip
)^{-1}\sigma(\tip,X_\tip) - (\nabla X_\ti
)^{-1}\sigma(\ti,X_\ti) \bigr]\hspace*{-20pt}
\\
\label{eqmodcontzipeq2} &&{} + (\nabla Y_s -\nabla Y_\tip) (
\nabla X_\ti)^{-1}\sigma(\ti,X_\ti).
\end{eqnarray}
The rest of the proof follows just like before, like $I_1$ for
(\ref{eqmodcontzipeq1}) and like $I_2$ for~(\ref{eqmodcontzipeq2}).
\end{longlist}

\emph{Final  step}---(i)  \textit{and}  (ii)
\textit{under}  \textup{(HX0)}--(HY0${}_{\mathrm{loc}}$)---\textit{arguing  via  mollification}:\quad
Here, we follow the same setup as in the proof of (\ref{ZSpestimate})
under \textup{(HX0)}--\textup{(HY0$_{\mathrm{loc}}$)} (see Theorem~\ref{theo-ZinSp}).

Take $b^n,\sigma^n,g^n,f^n$ as mollified versions of $b,\sigma,g,f$ in
their spatial variables such that the mollified functions satisfy uniformly
(in $n$) \textup{(HX0)} and \textup{(HY0$_{\mathrm{loc}}$)}, with uniform Lipschitz and
monotonicity constant. From the proof of Theorem~\ref{theo-ZinSp}, we know
that $\Theta=(X^n,Y^n,Z^n)\in\cS^p\times\cS^p\times\cH^p$ for
any $p\geq
2$ and $\Theta^n\to\Theta$ as $n\to\infty$ in
$\cS^{p}\times\cS^p\times\cH^{p }$ for all $p\geq2$.

For each $n\in\bN$ estimates (\ref{pathregY}) and (\ref{pathregZ})
hold for
$\Theta^n$. Since $b^n,\sigma^n,g^n,f^n$ satisfy \textup{(HX0)} and
\textup{(HY0$_{\mathrm{loc}}$)} uniformly in
$n$ then it is easy to check that the constants appearing on the RHS of
(\ref{pathregY}) and (\ref{pathregZ}) are independent of $n$. Hence,
by taking
the limit of $n\to\infty$ in (\ref{pathregY}) and (\ref{pathregZ})
and given
the convergence $\Theta^n\to\Theta$ as $n\to\infty$ (and the continuity
of the involved functions) the statement follows.

\begin{longlist}
\item[\emph{Proof of} (iii) \textit{under} \textup{(HX0)}--\textup{(HY0$_{\mathrm{loc}}$)}.]
The estimates concerning
$Y$ and
$\bar Z_\ti$ follow trivially from (\ref{pathregY}) on the one hand,
and (\ref{pathregZ}) combined with (\ref{eqbarZbestH2approx}) on the other
hand.
For the difference $Z_s-\bar Z_\tip$, more care is
required,
\begin{eqnarray*}
&& \sum_{i=0}^{N-1} \bE\biggl[\int
_\ti^\tip\llvert Z_s-
\bar{Z}_\tip\rrvert^2 \,\uds\biggr]
\\
&&\qquad \leq2 \sum
_{i=0}^{N-1} \bE\biggl[\int_\ti^\tip
\llvert Z_s-Z_\tip\rrvert^2 +\llvert
Z_\tip-\bar{Z}_\tip\rrvert^2 \,\uds\biggr]
\\
&&\qquad \leq C\llvert\pi\rrvert+2 \sum_{i=0}^{N-1}
(\tip-\ti)\bE\bigl[\llvert{Z}_\tip-\bar{Z}_\tip\rrvert
^2 \bigr],
\end{eqnarray*}
where the last inequality follows from the proof of (ii).
We next estimate the last term in the RHS, since $\bar Z_\tN=Z_T$ by
construction
\begin{eqnarray*}
&& \sum_{i=0}^{N-1} (\tip-\ti)\bE\bigl[
\llvert{Z}_\tip-\bar{Z}_\tip\rrvert^2 \bigr]
\\
&&\qquad = \sum_{i=0}^{N-2} (\tip-\ti)\bE
\bigl[\llvert{Z}_\tip-\bar{Z}_\tip\rrvert^2
\bigr]
\\
&&\qquad \leq r_\pi\sum_{i=0}^{N-2}
(t_{i+2}-t_\ip)\bE\bigl[\llvert{Z}_\tip-
\bar{Z}_\tip\rrvert^2 \bigr]
\\
&&\qquad \leq r_\pi\sum_{i=0}^{N-2}
\int_\tip^{t_{i+2}}\bE\bigl[\llvert
{Z}_\tip-\bar{Z}_\tip\rrvert^2 \bigr]\,\uds
\\
&&\qquad \leq r_\pi\sum_{j=1}^{N-1}
\int_{t_j}^{t_{j+1}}\bE\bigl[\llvert
{Z}_{t_j}-\bar{Z}_{t_j}\rrvert^2 \bigr]\,\uds
\\
&&\qquad \leq2 r_\pi\sum_{i=0}^{N-1}
\bE\biggl[\int_{\ti}^{\tip} \llvert
Z_s-{Z}_{\ti}\rrvert^2 +\llvert
Z_s-\bar{Z}_{\ti}\rrvert^2 \,\uds\biggr],
\end{eqnarray*}
where we made use of the assumption on the grid. The result now follows by
combining (iii) with the above estimates and having in mind that $r_\pi
$ is
uniform over the partition.\quad\qed
\end{longlist}\noqed
\end{pf}

%
\begin{corollary}
\label{barZinLp}
Let \textup{(HX0)}, \textup{(HY0)} hold and take the family
$\{\bar Z_\ti\}_{\ti\in\pi}$. For any $p\geq1$ there exists
constant $C_p$
independent of $\llvert\pi\rrvert$ such that
\[
\bE\Biggl[ \sum_{i=0}^{N-1} \bigl( \llvert
\bar{Z}_\ti\rrvert^2 (\tip-\ti) \bigr)^p
\Biggr] \le C_p<\infty.
\]
If, moreover, \textup{(HY0$_{\mathrm{loc}}$)} holds then $\max_{\ti\in\pi
} \bE[ \llvert \bar
Z_\ti\rrvert ^{2p} ] \leq C_p <\infty$.
\end{corollary}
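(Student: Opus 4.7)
The plan is to exploit the conditional-expectation structure of $\bar Z_\ti$ via two successive applications of Jensen's inequality: first to replace the average over $[\ti,\tip]$ by an $L^2$-average in the integrand, and then to exchange the $p$-th power with the conditional expectation. For the first claim, starting from the definition and conditional Jensen,
\[
|\bar Z_\ti|^2 \le \frac{1}{\tip-\ti}\,\bE\Bigl[\int_\ti^\tip |Z_s|^2 \uds\,\Big|\cF_\ti\Bigr],
\]
so that $|\bar Z_\ti|^2(\tip-\ti) \le \bE[\int_\ti^\tip |Z_s|^2\uds\,|\cF_\ti]$. A second use of conditional Jensen on $x\mapsto x^p$ (for $p\ge 1$) gives
\[
\bigl(|\bar Z_\ti|^2(\tip-\ti)\bigr)^p \le \bE\Bigl[\Bigl(\int_\ti^\tip |Z_s|^2\uds\Bigr)^p\,\Big|\cF_\ti\Bigr],
\]
and then taking unconditional expectations, summing over $i$ and using the elementary inequality $\sum_i a_i^p \le (\sum_i a_i)^p$ for non-negative $a_i$ (with $p\ge 1$) yields the uniform bound
\[
\bE\Bigl[\sum_{i=0}^{N-1} \bigl(|\bar Z_\ti|^2(\tip-\ti)\bigr)^p\Bigr] \le \bE\Bigl[\Bigl(\int_0^T |Z_s|^2\uds\Bigr)^p\Bigr] = \|Z\|_{\cH^{2p}}^{2p},
\]
which is finite, independently of $|\pi|$, by Theorem \ref{theo-existenceuniquenesscanonicalFBSDE}.

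For the second claim, the extra local-Lipschitz assumption (HY0${}_{\text{loc}}$) unlocks the pointwise bound $|Z_t|\le C(1+|X_t|)$ from Theorem \ref{theo-ZinSp}. Plugging this into the definition of $\bar Z_\ti$ and using conditional Jensen for $x\mapsto x^{2p}$ gives
\[
|\bar Z_\ti|^{2p} \le C_p\,\frac{1}{\tip-\ti}\,\bE\Bigl[\int_\ti^\tip (1+|X_s|)^{2p}\uds\,\Big|\cF_\ti\Bigr].
\]
Taking expectations and using $X\in\cS^{2p}$ with $\|X\|_{\cS^{2p}}^{2p}\le C_p(1+|x|^{2p})$ (a consequence of standard SDE theory under (HX0)) delivers $\bE[|\bar Z_\ti|^{2p}] \le C_p$ uniformly in $\ti$. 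The case $\ti=\tN$ is handled separately since $\bar Z_\tN=Z_T$, but the same pointwise bound combined with $X_T\in L^{2p}$ gives the required estimate.

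No serious obstacle is expected; the only mildly delicate point is making sure that the $L^p$-integrability of $Z$ is strong enough to absorb both Jensen steps, which is precisely why (HY0${}_{\text{loc}}$) — yielding $Z\in\cS^p$ via Theorem \ref{theo-ZinSp} — is needed for the stronger second estimate, whereas the weaker (HY0) (which only gives $Z\in\cH^p$) suffices for the first.
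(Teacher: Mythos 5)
Your proof is correct and follows essentially the same route the paper indicates: the first bound via two applications of (conditional) Jensen's inequality, the superadditivity $\sum_i a_i^p\le(\sum_i a_i)^p$, and $Z\in\cH^{2p}$ from Theorem \ref{theo-existenceuniquenesscanonicalFBSDE}; the second via the pointwise bound \eqref{ZSpestimate} under (HY0${}_{\text{loc}}$) together with $X\in\cS^{2p}$. You have simply supplied the details the paper leaves to the reader.
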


\begin{pf}
The second statement follows easily from the definition of
$\bar Z_\ti$ [see~(\ref{Z-bar-ti-pi})] and the fact that
estimate (\ref{ZSpestimate}) holds under \textup{(HY0$_{\mathrm{loc}}$)}.
Moreover, under this assumption the second estimate implies the first.

We leave the proof of the first statement for the interested reader. The
proof is based on standard integral manipulations combining the
definition of
$\bar Z$, Jensen's inequality, the fact that $Z\in\cH^p$ and the
tower property of the conditional expectation
[see Section~4.7.5 in \citet{Lionnet2014}].
\end{pf}


\subsection{Some finer properties}

Here, we discuss properties of the solution to
(\ref{canonicSDE})--(\ref{canonicFBSDE}) in more specific settings.
The first lemma concerns a set-up where $Z$ belongs to $\cS^\infty$ (rather
than $\cH^2$ or $\cS^2$).

\begin{proposition}[(The additive noise case)]
Let \textup{(HX0)}--\textup{(HY0$_{\mathrm{loc}}$)} hold. Assume additionally that
$\sigma(t,x)=\sigma(t)$
for all $(t,x)\in[0,T]\times\bR^d$. Then $Z\in\cS^\infty$.
\end{proposition}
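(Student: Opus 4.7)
The plan is to read off the conclusion almost directly from the pointwise bound obtained in Theorem \ref{theo-ZinSp}, exploiting the fact that in the additive-noise regime the diffusion coefficient reduces to a bounded deterministic function of time.

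First I would invoke Theorem \ref{theo-ZinSp}, which, under (HX0)--(HY0${}_{\text{loc}}$), provides a constant $C>0$ such that
\begin{equation*}
   |Z_t| \le C\,|\sigma(t,X_t)|, \qquad \udt\otimes\ud\bP\text{-a.s.}
\end{equation*}
This estimate was established by mollification, and crucially the constant $C$ depends only on the quantities in (HX0)--(HY0${}_{\text{loc}}$) through the uniform bound $\|\nabla_x u^n\|_\infty$ coming from \eqref{equation:u_x.is.bounded}/\eqref{estimatenablaynablaz}. Nothing in that construction is disturbed by imposing the additional structural hypothesis $\sigma(t,x)=\sigma(t)$.

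Next I would observe that under the additive-noise assumption the right-hand side loses its $\omega$-dependence: $\sigma(t,X_t)=\sigma(t)$. By (HX0), $\sigma$ is $1/2$-Hölder continuous in $t$ and $\|\sigma(\cdot,0)\|_\infty<\infty$, so in the present situation $\sigma(t)=\sigma(t,0)$ and in particular $\|\sigma\|_\infty:=\sup_{t\in[0,T]}|\sigma(t)|<\infty$. Combining these two facts gives the essentially sup-bound
\begin{equation*}
   \esssup_{(t,\omega)\in[0,T]\times\Omega}|Z_t(\omega)| \le C\,\|\sigma\|_\infty <\infty,
\end{equation*}
so that $Z\in\cS^\infty$, as required.

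There is no real obstacle: the proposition is an immediate corollary of Theorem \ref{theo-ZinSp} together with the trivial observation that an $x$-independent $\sigma$ satisfying (HX0) is uniformly bounded on $[0,T]$. The only point one might wish to double check is that the continuous version of $Z$ provided by the representation $Z_t=(\nabla_x u\,\sigma)(t,X_t)$ (valid under (HXY1) and inherited in the limit via the mollification argument of Theorem \ref{theo-ZinSp}) makes the $\esssup$ above coincide with $\sup_{t\in[0,T]}|Z_t|$, so that the $\cS^\infty$ (rather than merely $L^\infty(\udt\otimes\ud\bP)$) conclusion is legitimate.
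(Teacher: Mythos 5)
Your proposal is correct and follows essentially the same route as the paper: the paper's proof combines the representation $Z_s=(\nabla_x u\,\sigma)(s,X_s)$ with $\|\nabla_x u\|_\infty<\infty$ and the boundedness of an $x$-independent $\sigma$, first under (HXY1) and then via mollification, which is exactly the content of the bound $|Z_t|\le C|\sigma(t,X_t)|$ from Theorem \ref{theo-ZinSp} that you invoke directly. Your closing remark about upgrading the $\udt\otimes\ud\bP$-a.e.\ bound to an $\cS^\infty$ statement is a fair point (one may either pass to the continuous version or simply truncate $Z$ on a null set), and does not affect the validity of the argument.
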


\begin{pf}
Assume first that \textup{\textup{(HXY1)}} also hold. Then the result follows easily
by combining the representation formula (\ref{eqrepZNo1}) with the
2nd part
of (\ref{repynablaunablax}) and injecting that $\sigma$ is uniformly bounded.

Now using a standard mollification argument, as was used in the last
step of
the proof of Theorem~\ref{thregularity}, one easily concludes that
the result
also holds under \textup{(HX0)}--\textup{(HY0$_{\mathrm{loc}}$)}.
\end{pf}
If the initial data $g$ and $f(\cdot,\cdot,0,0)$ are bounded, then so
will be
the $Y$ process; the second component, $Z$ will also satisfy a type of
boundedness condition [see~(\ref{eqZinBMO}) below].

\begin{lemma}[(The bounded setting)]
\label{lemmaboundedsetting}
Let \textup{(HX0)}, \textup{(HY0)} hold and further that $g$ and $(t,x)\mapsto f(t,x,0,0)$
are uniformly bounded then $(Y,Z)\in\cS^\infty\times\cH^2$.

Denoting $\cT_{[0,T]}$ the set of all stopping times $\tau\in[0,T]$,
then $Z$
satisfies further\footnote{This means $Z$ belongs to the so-called
$\cH_{\BMO}$-spaces, see Section~2.3 in \citet{ImkellerDosReis2010} or
Section~10.1 in \citet{Touzi2012}.} for some constant $K_{\BMO}>0$
%
\begin{equation}
\label{eqZinBMO} \sup_{\tau\in\cT_{[0,T]}} \biggl\llVert\bE\biggl[
\int
_\tau^T \llvert Z_s\rrvert
^2\,\uds\Big| \cF_\tau\biggr] \biggr\rrVert_\infty
\leq K_{\BMO}<\infty.
\end{equation}
The constant $K_{\BMO}$ depends only on $\llVert Y\rrVert_{\cS
^\infty}$, the
bounds for
$g$, $f(\cdot,\cdot,0,0)$ and the constants appearing in \textup{(HY0)}.
\end{lemma}

\begin{pf}
The boundedness of $Y$ follows from (\ref{eqpathwise-estim-for-polyy-BSDE})
by using that $g(X_\cdot)$ and $f(\cdot,X_\cdot,0,0)$ are in $\cS
^\infty$. Knowing that
$Y\in\cS^\infty$ we can easily adapt the
proof of Lemma 10.2 in \citet{Touzi2012} to our setting, where we make
use of
the inequality $\llvert z\rrvert \leq1+\llvert z\rrvert ^2$, to
obtain (\ref{eqZinBMO}); an alternative
proof would be to use (\ref{eqpathwise-estim-for-polyy-BSDE}).
\end{pf}
The first of the above results implies that $Z$ is bounded. Such a setting
also includes the case of $\sigma(t,x)=1$ which is
common in many applications in reaction--diffusion equations. The next result
provides another type of control for the growth of the process
$Z$ without the boundedness assumption on $\sigma$.

\begin{proposition}
Let the assumptions of Lemma~\ref{lemmaboundedsetting} hold. Assume
further that $\llvert Z\rrvert ^2$ is a submartingale then
$\llvert Z_t\rrvert \leq{K_{\BMO}}/{\sqrt{T-t}}$, $\forall
t\in[0,T]$ $\bP$-a.s.

In particular, if $\sigma$ is uniformly elliptic and \textup{\textup{(HXY1)}} holds then
there exists $C>0$ such that
$\llvert \nabla_x u(t,x)\rrvert \leq{C}/{\sqrt{T-t}}$,
$\forall(t,x)\in
[0,T)\times
\bR^n$.
\end{proposition}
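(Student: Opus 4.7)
The plan is to combine the submartingale hypothesis on $|Z|^2$ with the BMO bound \eqref{eq:ZinBMO} from Lemma \ref{lemma:boundedsetting}. First, I would fix $t\in[0,T)$ and note that since $|Z|^2$ is a submartingale, for every $s\in[t,T]$ we have $\bE[\,|Z_s|^2\,|\,\cF_t\,]\ge |Z_t|^2$ almost surely. Since $Z\in\cH^2$, Fubini's theorem (for conditional expectations) gives
\begin{align*}
\bE\Big[\int_t^T |Z_s|^2\,\uds\,\Big|\,\cF_t\Big]
= \int_t^T \bE\big[\,|Z_s|^2\,\big|\,\cF_t\,\big]\,\uds
\ge (T-t)\,|Z_t|^2, \qquad \bP\text{-a.s.}
\end{align*}

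Second, applying \eqref{eq:ZinBMO} at the deterministic stopping time $\tau=t$ gives that the left-hand side is essentially bounded by $K_{BMO}$, so $(T-t)\,|Z_t|^2\le K_{BMO}$ a.s., which yields the claimed inequality (up to replacing $K_{BMO}$ by $\sqrt{K_{BMO}}$, which is the mild constant abuse in the statement). This step uses only that $t$ is a valid (constant) stopping time and Lemma \ref{lemma:boundedsetting}, whose hypotheses are in force.

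For the second assertion, I would apply the first bound to the FBSDE \eqref{canonicSDE}--\eqref{canonicFBSDE} started at $(t,x)\in[0,T)\times\bR^n$, giving $|Z^{t,x}_t|\le \sqrt{K_{BMO}}/\sqrt{T-t}$ almost surely. Under (HXY1), Theorem \ref{theo-ZinSp} together with \eqref{repynablaunablax} provides a continuous version for which $Z^{t,x}_s=(\nabla_x u\,\sigma)(s,X^{t,x}_s)$ holds for all $s\in[t,T]$; evaluating at $s=t$ and using $X^{t,x}_t=x$ gives the deterministic identity $Z^{t,x}_t=\nabla_x u(t,x)\,\sigma(t,x)$. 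Uniform ellipticity of $\sigma$, i.e.\ $\sigma\sigma^*\ge\lambda I_d$ for some $\lambda>0$, implies $|\nabla_x u(t,x)\,\sigma(t,x)|\ge\sqrt{\lambda}\,|\nabla_x u(t,x)|$, so
\begin{align*}
|\nabla_x u(t,x)|\ \le\ \frac{1}{\sqrt{\lambda}}\,|Z^{t,x}_t|\ \le\ \frac{C}{\sqrt{T-t}},
\end{align*}
uniformly in $x\in\bR^n$, which is the second claim.

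The only subtle step is the Fubini exchange of $\int_t^T$ with $\bE[\cdot|\cF_t]$: this is where one uses that $Z\in\cH^2$ so that $|Z_s|^2$ is integrable on $[t,T]\times\Omega$ and the submartingale inequality can be integrated pathwise in $s$. Everything else is a direct combination of already-proved ingredients (BMO bound, path representation of $Z$, and ellipticity), so I do not expect a serious obstacle.
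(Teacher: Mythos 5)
Your proposal is correct and follows essentially the same route as the paper: Fubini for conditional expectations plus the submartingale inequality to bound $\bE[\int_t^T|Z_s|^2\uds\,|\,\cF_t]$ from below by $(T-t)|Z_t|^2$, the BMO bound \eqref{eq:ZinBMO} from above, and then the representation $Z^{t,x}_t=(\nabla_x u\,\sigma)(t,x)$ together with uniform ellipticity for the gradient bound. Your remark that the derivation actually yields $\sqrt{K_{BMO}}/\sqrt{T-t}$ rather than $K_{BMO}/\sqrt{T-t}$ is a fair observation about the constant in the statement, which the paper passes over silently.
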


\begin{pf}
The first statement follows by a careful but rather clean analysis of the
fact that $Z$ satisfies (\ref{eqZinBMO}), which in particular means any
$t\in[0,T]$ $\bP$-a.s.
\begin{eqnarray*}
K_{\BMO} &\geq& \bE\biggl[ \int_t^T
\llvert Z_s\rrvert^2\,\uds\Big| \cF_t \biggr] = \int_t^T \bE\bigl[ \llvert Z_s
\rrvert^2 \mid\cF_t \bigr]\,\uds
\\
& \geq&\int_t^T \llvert Z_t
\rrvert^2\,\uds= \llvert Z_t\rrvert^2 (T-t),
\end{eqnarray*}
where we applied Fubini then used the submartingale property of $Z^2$. The
sought statement now follows by a direct rewriting of the above inequality.
The
second statement in the proposition follows from the first by using the
representation
$Z^{t,x}_t=(\nabla_x u \sigma)(t,x)$ and the ellipticity of $\sigma$.
\end{pf}

\section{Numerical discretization and general estimates}\label{sectionnumericaldiscretization}

In this section and the following ones, we discuss the numerical approximation
of (\ref{canonicSDE})--(\ref{canonicFBSDE}).
We consider a regular partition\footnote{We point out that the results we
state would hold for nonuniform time-steps, but we work with a regular
partition for notational clarity and to keep the focus on the main issues.}
$\pi$ of $[0,T]$ with $N+1$ points $t_i = i h$ for $i=0,\ldots,N$
with $h:=T/N$.

%
\begin{remark}[(On constants)]
Throughout the rest of this work, we introduce a generic constant
$c>0$, that
will always be independent of $h$ or $N$, though it may
depend on the problem's data, namely the constants appearing in the
assumptions, and may change from line to line.
\end{remark}

\subsection{Discretization of the SDE and further setup}\label{subsectiondiscretizationforwardcomponent}

Numerical methods for SDEs with Lipschitz continuous coefficients are well
understood; see Section~10 in \citet{KloedenPlaten1992}.
Therefore, we take as given a family
of random variables $\{X_i\}_{i=0,\ldots, N}$ that approximates
the solution $X$ to (\ref{canonicSDE}) over the grid $\pi$. More
exactly, for
any
$p\ge2$ there exists a constant
$c=c(T,p,x)$ such that
%
\begin{eqnarray}
\label{eqSDEdiscretizationL2}  \sup_{N \in\bN} \max_{i=0,\ldots,N}\bE
\bigl[ \llvert X_i\rrvert^p \bigr] &\le& c
\end{eqnarray}
and
\begin{eqnarray}
\mathrm{ERR}_{\pi,p}(X)&:=& \max
_{i=0,\ldots,N}\bE\bigl[ \llvert X_{\ti
} - X_i
\rrvert^{p} \bigr]^{1/p} \le c h^{\gamma},\qquad\gamma
\geq\frac{1}{2},
\end{eqnarray}
where $\gamma$ is called the rate of the strong convergence and the random
variables
$\{X_\ti\}_{\ti\in\pi}$ are the solution to (\ref{canonicSDE}) on
the grid points $\pi$. Under \textup{(HX0)}, the Euler scheme give an approximation
with
$\gamma=
1/2$. For conditions required for the higher order schemes, we refer to
\citet{KloedenPlaten1992}.
Since the upper bound in the estimate on the error on $X$ does not
depend on $p$,
and since we use only the case $p=2$ in the following, we simplify the
notation to
$\mathrm{ERR}_{\pi}(X) \le c h^\gamma$.

Throughout the rest of this work, we assume that the family
$\{X_i\}_{i=0,\ldots, N}$ has been computed; we denote by
$\{\cF_i\}_{i=0,\ldots, N}$ the associated discrete-time filtration
$\cF_i:=\sigma(X_j,j=0,\ldots,i)$ and with respect to this
filtration we
define the operator $\bE_i[\cdot]:=\bE[\cdot\mid\cF_i]$.

For the analysis of the time-discretization error, we also make use of the
following standard path-regularity estimate for $X$,
which holds under \textup{(HX0)}: there exists a constant $c>0$ such that
%
\begin{eqnarray}
\label{eqSDEpathregularity}
\mathrm{REG}_\pi(X)&:=& \max
_{i=0,\ldots,N-1}  \sup_{\ti\le s \le\tip} \bigl\{ \bE\bigl[
\llvert X_s-X_{\ti}\rrvert^2
\bigr]^{1/2}
+ \bE\bigl[ \llvert X_s-X_{\tip}\rrvert
^2 \bigr]^{1/2} \bigr\} \hspace*{-15pt}
\nonumber\\[-8pt]\\[-8pt]\nonumber
&\le& c h^{1/2}.
\end{eqnarray}

\subsection{Schemes considered and main convergence results}\label{subsectionmainresultsonconvergenceandorganisation}

For the reader's convenience, we state immediately the numerical
schemes under
consideration as well as their convergence rates. The rest of this work deals
with the proofs of the stated results.

Theorem~\ref{thregularity} implies that to approximate
$(Y,Z)$ solution to (\ref{canonicFBSDE}) over $[0,T]$ one needs only to
approximate the family
$\{(Y_\ti,\bar{Z}_\ti)\}_{\ti\in\pi}$ [recall (\ref{Z-bar-ti-pi})]
on the grid $\pi$ via a family of random variables
$\{(Y_i,Z_i)\}_{i=0,\ldots,N}$, the
said numerical approximation. The error criterion we consider is given by
%
\begin{eqnarray}
\label{eqerrornorm-sec4}  \quad \mathrm{ERR}_\pi(Y,Z)&:=& \Biggl( \max
_{i=0, \ldots, N} \bE\bigl[ \llvert Y_\ti- Y_i
\rrvert^2 \bigr] + \sum_{i=0}^{N-1}
\bE\bigl[\llvert\bar{Z}_\ti- Z_i \rrvert
^2 \bigr] h \Biggr)^{1/2}.
\end{eqnarray}

\subsubsection{The implicit-dominant \texorpdfstring{$\theta$}{theta}-schemes of Section~\texorpdfstring{\protect\ref{sectionthetaimplicitschemes}}{5}}

Let $\theta\in[0,1]$. Define $Y_N:=g(X_N)$ and $Z_N:=0$ and, for
$i=N-1,N-2, \ldots, 0$,
%
\begin{eqnarray}
Y_i &:=& \bE_i \bigl[ Y_{i+1}
+ (1-\theta)f(\tip,X_\ip,Y_{i+1},Z_{i+1})h \bigr]
\nonumber\\[-8pt]\label{eqY} \\[-8pt]\nonumber
&&{} +\theta f(\ti,X_i,Y_{i}, Z_{i})h,
\\
\label{eqZ} Z_i &:=& \bE_{{i}} \biggl[
\frac{\Delta W_{{i+1}} }{h } \bigl( Y_{{i+1}} + (1-\theta)f(\tip,X_\ip,Y_{i+1},Z_{i+1})h
\bigr) \biggr],
\end{eqnarray}
where $\Delta W_\ip= W_\tip- W_i$.
The above scheme is the called $\theta$-scheme.
Its derivation is presented in Section~\ref{subsectiondiscretizationbackwardcomponent} and the solvability (in
$Y_i$) of (\ref{eqY}) for $\theta> 0$ is discussed in Section~\ref
{subsectionwelldefinednessandlocalestimate}.
When $\theta= 1$ this is the implicit backward Euler scheme,
when $\theta= 0$ this is the explicit scheme.
For $\theta\in\,]0,1[$ it is a combination of both.
The particular case of $\theta=1/2$ is the trapezoidal scheme which,
we will
show,
has a better convergence rate (under certain conditions).
The convergence rate of the above scheme is summarized in the next result.

%
\begin{theorem}
\label{thmainresult}
Let \textup{(HX0)}, \textup{(HY0$_{\mathrm{loc}}$)} hold as well as the restriction
$h\le\min\{1,[4\theta(L_y+3d\theta
L_z^2 )]^{-1}\}$.
Let $\gamma\geq1/2$ be the order of the approximation
$\{X_i\}_{i = 0, \ldots, N}$ of $X$ as in (\ref{eqSDEdiscretizationL2}).
Then, for the scheme (\ref{eqY})--(\ref{eqZ}) we have:
\begin{longlist}[(ii)]
\item[(i)] For $\theta\in[1/2,1]$, there exists a constant $c$ such that
$\mathrm{ERR}_\pi(Y,Z) \le c h^{1/2}$.
\item[(ii)] Take $\theta=1/2$ and scheme (\ref{eqY}). Assume
that $f\in C^2$, $f(t,x,y,z)=f(y)$ and $\partial^2_{yy}f$ has at
most polynomial growth, then
there exists $c>0$ such that
$\max_{i=0, \ldots, N} \bE[\llvert Y_\ti- Y_i \rrvert
^2]^{1/2} \le c h^{ \min\{ 7/4,\gamma\} }$.
\end{longlist}
\end{theorem}

Reasons why the above theorem only holds for $\theta\ge1/2$---that is to say when the scheme is ``more implicit than explicit''---will be seen later in the proofs in Section~\ref
{sectionthetaimplicitschemes}.
But from the motivating example of the \hyperref[sec1]{Introduction}, we know already
that one
could not have expected convergence of the scheme in general, for all
$\theta\in[0,1]$.

\subsubsection{The tamed explicit scheme of Section~\texorpdfstring{\protect\ref{sectionexplicitscheme}}{6}}\label{subsec-tamed-euler}
By inspecting the proof of Lemma~\ref{lemma-counterexample}, we see that
the unboundedness of $g(X_T)$ plays the key role in the explosion. In
Section~\ref{sectionexplicitscheme}, we analyze a tamed version of the
fully explicit ($\theta=0$) scheme (\ref{eqY})--(\ref{eqZ}).

For any level $L>0$, we define the truncation function $T_L\dvtx \bR\to
\bR$, $x \mapsto-L \lor x \land L $. We denote similarly its
extension as a function from $\bR^d$ to $\bR^d$ (projection on the
ball of
radius $L$).
We consider the following scheme: define $Y_N:=T_{L_h} (g(X_N) )$,
$Z_N:=0$,
and for $i=N-1, \ldots, 0$,
%
\begin{eqnarray}
\label{eqY-tamedexplicit} Y_i &:=& \bE_i \bigl[ Y_{i+1}
+ f \bigl(\tip,T_{K_h}(X_\ip),Y_{i+1},Z_{i+1}
\bigr)h \bigr],
\\
\label{eqZ-tamedexplicit} Z_i &:=& \bE_{{i}} \biggl[
\frac{\Delta W_{{i+1}} }{h } \bigl(Y_{{i+1}} + f \bigl(\tip,T_{K_h}(X_\ip),Y_{i+1},Z_{i+1}
\bigr)h \bigr) \biggr],
\end{eqnarray}
where the levels $L_h$ and $K_h$ satisfy
$e^{c_1 T} ( L_h^2 + c_2 T + c_2 T K_h^2 ) \le
{h}^{-1/({m-1})}$, with
\begin{eqnarray*}
&&c_1 = 2 \bigl(L_{y}+ 12 d L_z^2
+ 2L_y^2 \bigr) \quad\mbox{and}\quad c_2 =
\max\biggl\{ \frac{L^2}{4 d L_z^2 }, \frac{L_x^2}{4 dL_z^2 } \biggr\}.
\end{eqnarray*}
For $h\le h^*$, where $h^*$ satisfies $e^{c_1 T} c_2 T \le
({h^*})^{-1/({m-1})}/3$ and $h^*\leq1/(32dL_z^2)$ we can take
\begin{eqnarray*}
L_h &=& \frac{1}{\sqrt{3}} e^{-(1/2)c_1 T} \biggl(
\frac{1}{h} \biggr)^{1/(2(m-1))} \quad\mbox{and}\quad K_h =
\frac{1}{\sqrt{3}} \frac{e^{-(1/2)c_1 T}}{\sqrt{c_2 T}} \biggl
(\frac{1}{h}
\biggr)^{1/(2(m-1))}.
\end{eqnarray*}

Concerning the scheme (\ref{eqY-tamedexplicit})--(\ref
{eqZ-tamedexplicit}), we
have the following convergence rate.

\begin{theorem}
\label{thconvergencerateforexplicitscheme}
Let \textup{(HX0)}, \textup{(HY0$_{\mathrm{loc}}$)} hold and $h\leq h^*$. Assume that the order
$\gamma$ of the approximation $\{X_i\}_{i = 0, \ldots, N}$ of $X$ is at
least $1/2$ [see (\ref{eqSDEdiscretizationL2})].
Then for the controlled explicit scheme
(\ref{eqY-tamedexplicit})--(\ref{eqZ-tamedexplicit}), there exists a
constant c such that $\mathrm{ERR}_\pi(Y,Z) \le c h^{1/2}$.
\end{theorem}

\subsubsection{Modus operandi for the proofs and organization of rest of the paper}

The proof of the above results is a (long)
two-step procedure.
The first step is contained in the rest of this section since it is a general
argument common to most discretization schemes.
The second one is scheme-specific, hence the separation into
Sections~\ref{sectionthetaimplicitschemes} and~\ref{sectionexplicitscheme}.
We now describe the said procedure.

Before one is able to state a global error estimate for
(\ref{eqerrornorm-sec4}), one needs to find the local error
estimates, that is,
the distance between the solution
and its approximation over one time interval $[\ti,\tip]$.
This local error has two components.
The first is the \emph{one-step discretization error} following from
approximating the involved
integrals over $[\ti,\tip]$ by some quadrature rule.
The second is the backward propagation of the error due to
not having at time $\tip$ the true solution to compute the
approximation at time $\ti$ and we coin it \emph{stability error}.

In the next subsection, we give the
\emph{Fundamental Lemma for
convergence} (Lemma~\ref{lemfundamental}) that explains how to
aggregate the
one-step discretization error and the stability error for each $[\ti,\tip]$
into a single estimate with (\ref{eqerrornorm-sec4}) on its LHS. This
later allows us to derive the convergence rates.

The estimation of the one-step discretization error is common to both schemes.
This is done in Section~\ref{subsectionlocaldiscretizationerror} and
the general result is stated in Proposition~\ref{propglobaldiscretizationerror}.
Left to Sections~\ref{sectionthetaimplicitschemes} and
\ref{sectionexplicitscheme} is the scheme-specific stability analysis
[i.e., the estimation of $\cR^{\cS}(H)$ in
(\ref{eqauxfullestimateerrorYYZZ}) below].
Sections~\ref{sectionthetaimplicitschemes} and
\ref{sectionexplicitscheme} follow the same structure:
(1) one first shows some uniform global integrability for the scheme;
(2) then one studies the local (one-step) stability of the scheme; this shows
how the error propagates in just one backward step, and yields an expression
for the terms $H_j$ composing the stability remainder (see Definition
\ref{defstab} below);
(3) one finally estimates the stability remainder $\cR^{\cS}(H)$.
Once this is done, one can inject the results into estimate
(\ref{eqauxfullestimateerrorYYZZ})
given by the Fundamental Lemma~\ref{lemfundamental}; and finally estimate
the RHS of (\ref{eqauxfullestimateerrorYYZZ}) as a function of the time-step
$h$, hence obtaining the convergence rate.

At the end of Section~\ref{sectionthetaimplicitschemes}, we discuss the
fully second-order discretization scheme when $f$ is allowed to depend
only on
$y$ and we discuss as well a variance reduction trick for the
computation of
the
involved conditional expectations.

\subsection{Fundamental Lemma for convergence}\label{subsectionfundamentallemma}

The goal of this section is to present a very general but clear result
estimating the global error (\ref{eqerrornorm-sec4}) of a scheme
for BSDE (\ref{canonicFBSDE}).
Although this type of analysis has already been used in the context of
Lipschitz BSDEs
[see, e.g., \citet{CrisanManolarakis2012}, \citeauthor{Chassagneux2012}  (\citeyear{Chassagneux2012,Chassagneux2013})], we generalize it to
the non-Lipschitz framework we are working with. More precisely, the
Fundamental Lemma we present below allows us to cope with schemes which
lack stability in the sense of \citet{Chassagneux2013}.\hskip.2pt\footnote{See Definition
2.1 in \citet{Chassagneux2013} with
$\zeta_i^Y=\zeta_i^{Z}=0$ for $i=0,\ldots,N-1$.}

\subsubsection{Abstract formulation of a scheme and description of the local error}\label{subsubsectionabstractschemeanddecompositionoferror}

In abstract terms, a discretization scheme for a BSDE generates recursively
(and backward in time) a family of random variables
$\{(Y_i,Z_i)\}_{i=0,\ldots,N}$
approximating $\{(Y_\ti,\bar{Z}_\ti)\}_{\ti\in\pi}$ via some
operators
$\Phi_i\dvtx  L^2(\F_{\ip})\times L^{2}(\F_{\ip}) \rightarrow L^2(\F
_{i}) \times
L^{2}(\F_{i})$,
$i\in\{N-1,\ldots,0\}$.
One starts with an initial approximation $(Y_N,Z_N)$ and for
$i=N-1,\ldots,0$ computes $(Y_i,Z_i):=\Phi_i(Y_\ip,Z_\ip)$.
[Compare with
(\ref{eqY})--(\ref{eqZ}) or
(\ref{eqY-tamedexplicit})--(\ref{eqZ-tamedexplicit}).]

Since $(Y_i,Z_i)$ is obtained via $\Phi_i$
from the input $(Y_\ip,Z_\ip)$, we introduce the following
notation: for any $i=0,1,\ldots,N-1$, given a $\cF_\tip$-measurable input
$(\cY,\cZ)$, the pair $(Y_{i,(\cY,\cZ)},Z_{i,(\cY,\cZ)})$ denotes the
associated output of $\Phi_i(\cY,\cZ)$.
Writing $(Y_i,Z_i)$ without specifying
the input denotes the canonical output of $\Phi_i(Y_\ip,Z_\ip)$,
that is, we
refer to the family of RV's $\{(Y_i,Z_i)\}_{i=0, \ldots, N}$.
We introduce as well the notation $\widehat{Y}_i=Y_{i,(Y_\tip,\bar
{Z}_\tip)}$
and $\widehat{Z}_i=Z_{i,(Y_\tip,\bar{Z}_\tip)}$ as the output of
$\Phi_i(Y_\tip,\bar{Z}_\tip)$.

We decompose the local error into two parts: the one-step
time-discretization error and the propagation to time $\ti$ of
the error from time $\tip$ (the stability error).
So, given $i\in\{0,\ldots,N-1\}$, we write
\begin{eqnarray*}
Y_\ti-Y_i &=& (Y_\ti- \widehat{Y}_i
) + (\widehat{Y}_i- Y_{i} )
\\
&=&\underbrace{ ( Y_\ti- Y_{i,(Y_{t_{i+1}},\bar{Z}_{\tip})} )}_{\mathrm{one}\mbox{-}\mathrm{step\ discretization\ error}} +
\underbrace{ ( Y_{i,(Y_{t_{i+1}},\bar{Z}_{\tip})} -
Y_{i,(Y_{i+1},Z_{i+1})} )}_{\mathrm{stability\ of\ the\ scheme}},
\end{eqnarray*}
and similarly for $Z$
\begin{eqnarray*}
\bar{Z}_\ti-{Z}_i &=& (\bar{Z}_\ti-
\widehat{Z}_i ) + (\widehat{Z}_i- Z_{i} )
\\
&=& \underbrace{ ( \bar{Z}_\ti- Z_{i,(Y_{t_{i+1}},\bar{Z}_{\tip})} )
}_{\mathrm{one}\mbox{-}\mathrm{step\ discretization\ error}} + \underbrace{ (
Z_{i,(Y_{t_{i+1}},\bar{Z}_{\tip})} - Z_{i,(Y_{i+1},Z_{i+1})}
)}_{\mathrm{stability\ of\ the\ scheme}}.
\end{eqnarray*}
We now turn to the question of how to aggregate these errors in order to
estimate the global error $\mathrm{ERR}_\pi(Y,Z)$ [see
(\ref{eqerrornorm-sec4})].

\subsubsection{The Fundamental Stability Lemma}

The purpose of the Fundamental Lemma below is to formulate in a transparent
way the ingredients required to show convergence of
$\{(Y_i,Z_i)\}_{i=0,\ldots,N}$ to $\{(Y_\ti,\bar Z_\ti)\}_{\ti\in
\pi}$
in the error criterion (\ref{eqerrornorm-sec4}). To start with, we define
precisely our concept of stability, generalizing that
in \citet{Chassagneux2012} and \citet{Chassagneux2013}.

\begin{definition}[(Scheme stability)]
\label{defstab}
We say that the numerical scheme $\{(Y_i,Z_i)\}_{i=0,\ldots,N}$ is
\emph{stable} if
for some $\rho>0$ there exists a constant
$c>0$ such that
%
\begin{eqnarray}\label{eqdef-of-stability-sec4}
\qquad&& \bE\bigl[ \llvert Y_{i,(Y_{t_{i+1}},\bar{Z}_{\tip})} -
Y_{i,(Y_{i+1},Z_{i+1})} \rrvert
^{2} \bigr]\nonumber
\\
&&\quad{}  + \rho\bE\bigl[ \llvert Z_{i,(Y_{t_{i+1}},\bar{Z}_{\tip})} -
Z_{i,(Y_{i+1},Z_{i+1})} \rrvert^2 \bigr] h
\\
&&\qquad \le( 1 + c h )
\biggl( \bE\bigl[ \llvert
Y_{t_{i+1}} - Y_{i+1} \rrvert^{2} \bigr] +
\frac{\rho}{4} \bE\bigl[ \llvert\bar{Z}_{\tip} -Z_{i+1}
\rrvert^2 \bigr] h \biggr) + \bE[ H_{i} ],\nonumber
\end{eqnarray}
where $H^{}_i\in L^{1}(\F_i)$,
and moreover $\{H_i\}_{i=0,\ldots,N-1}$ satisfies
\begin{eqnarray*}
&&\cR^{\cS}(H):= \max_{i=0, \ldots, N-1} \sum
_{j=i}^{N-1} e^{c(j-i)h}
\bE[H_j] \longrightarrow0\qquad\mbox{as } h \rightarrow0.
\end{eqnarray*}
The quantity $\cR^\cS(H)$ is called the \emph{stability remainder}.
\end{definition}

%
\begin{remark}
In the case where $f$ is a globally Lipschitz
function, it can be shown for both implicit and explicit schemes
that $H_i=0$ [see \citet{CrisanManolarakis2012} or
\citet{Chassagneux2013}]. The scheme is then \emph{locally} stable.
Our definition of stability allows one to cope with schemes which
are not locally stable, as is the case when $f$ is a monotone function with
polynomial growth in~$y$,
provided we can control the term $\cR^\cS(H)$
(which we do in Section~\ref{sectionthetaimplicitschemes}). We also point
out that it is crucial that in (\ref{eqdef-of-stability-sec4}) we have
$\rho> \frac{\rho}{4}$
(compare LHS with RHS). This later allows the use of Gronwall type
inequalities (see Lemma~\ref{lemGron}).
\end{remark}

We now state the Fundamental Lemma which is the
basis of the error analysis throughout.

%
\begin{lemma}[(Fundamental Lemma)]\label{lemfundamental}
Assume that the numerical scheme $\{(Y_i,Z_i)\}_{i=0,\ldots,N}$ is
stable. Denoting the one-step discretization errors for
$i=0,\ldots,N-1$ by
%
\begin{equation}
\label{eqauxlocalestimateYYZZ} \cases{ \displaystyle\tau_i(Y):=\bE
\bigl[\llvert
Y_\ti- Y_{i,(Y_{t_{i+1}},\bar{Z}_{\tip})} \rrvert^{2} \bigr] = \bE\bigl[
\llvert Y_\ti- \widehat{Y}_i \rrvert^{2}
\bigr],
\cr
\displaystyle\tau_i(Z):= \bE\bigl[\llvert
\bar{Z}_\ti- Z_{i,(Y_{t_{i+1}},\bar{Z}_{\tip})} \rrvert^2 h\bigr] =\bE
\bigl[\llvert\bar{Z}_\ti- \widehat{Z}_i \rrvert
^2 h\bigr],}
\end{equation}
%
there exists a constant $C=C(\rho,T,c)$ such that
%
\begin{eqnarray}\label{eqauxfullestimateerrorYYZZ}
\qquad&& \bigl(\mathrm{ERR}_\pi(Y,Z) \bigr)^2\nonumber
\\
&&\qquad  \le C \Biggl\{ \bE\bigl[ \llvert
Y_\tN-
Y_N \rrvert^2\bigr] + \bE\bigl[\llvert\bar
{Z}_\tN- Z_N \rrvert^2\bigr] h
+ \sum_{i=0}^{N-1} \biggl(
\frac{\tau_i(Y)}{h} + \tau_i(Z) \biggr) \Biggr\}
\\
&&\quad\qquad{}+ (1+h)\cR^{\cS}(H).\nonumber
\end{eqnarray}
\end{lemma}

This result states in a rather clear fashion [although $\cR^{\cS}(H)$
is unknown at this point] what is required in order to have convergence of
the numerical scheme.
First, one needs a control on the approximation of the
terminal conditions [the first two terms in the RHS of
(\ref{eqauxfullestimateerrorYYZZ})].
Second, one needs a control on the sum of the one-step
time-discretization errors (\ref{eqauxlocalestimateYYZZ})
[the 3rd term in the RHS of (\ref{eqauxfullestimateerrorYYZZ})].
Third, one need a control on the stability remainder $\cR^{\cS}(H)$ arising
from the scheme
stability~(\ref{eqdef-of-stability-sec4}) [last term in the RHS of
(\ref{eqauxfullestimateerrorYYZZ})]. Of course, the form of $\cR
^{\cS}(H)$
depends on the specific scheme one is handling but in general the error
$\mathrm{ERR}_\pi(Y,Z)$ of the scheme is always dominated by
(\ref{eqauxfullestimateerrorYYZZ}).

The first element will be estimated in Lemma~\ref{lemmaerroronterminalconditons}.
The second is the subject of Section~\ref
{subsectionlocaldiscretizationerror} and the estimate is given in
Proposition~\ref{propglobaldiscretizationerror}.
Finally, the study of the stability of the schemes is done in
Sections~\ref{sectionthetaimplicitschemes} and~\ref
{sectionexplicitscheme}.
The convergence rate of the scheme will then follow by estimating
further the
RHS of (\ref{eqauxfullestimateerrorYYZZ}).

\begin{pf*}{Proof of Lemma \ref{lemfundamental}}
 We use throughout the following notation:
$\widehat{Y}_i=Y_{i,(Y_\tip,\bar{Z}_\tip)}$,
$\widehat{Z}_i=Z_{i,(Y_\tip,\bar{Z}_\tip)}$, $Y_i=Y_{i,(Y_{i+1},Z_{i+1})}$\vspace*{1pt}
and $Z_i=Z_{i,(Y_{i+1},Z_{i+1})}$
introduced in Section~\ref{subsubsectionabstractschemeanddecompositionoferror}.
We decompose the error as explained
above and use Young's inequality to get
$
\llvert Y_\ti- Y_i \rrvert^2 \le(1+\frac{1}{h})
\llvert Y_\ti-\widehat{Y}_i \rrvert^2
+ (1+h) \llvert\widehat{Y}_i - Y_i \rrvert^2
$
and
$
\llvert\bar{Z}_\ti- Z_i \rrvert^2 h
\le2
\llvert\bar{Z}_\ti- \widehat{Z}_i \rrvert^2 h
+ 2
\llvert\widehat{Z}_i - Z_i \rrvert^2 h$.

Using $\rho>0$ from (\ref{eqdef-of-stability-sec4}) and the definition
(\ref{eqauxlocalestimateYYZZ}) above, it then follows that
\begin{eqnarray*}
&&  \bE\bigl[\llvert Y_\ti- Y_i \rrvert^2
\bigr] + \frac{\rho}{2} \bE\bigl[\llvert\bar{Z}_\ti-
Z_i \rrvert^2\bigr] h
\\
&&\qquad \le(1+h)\bE\bigl[ \llvert\widehat{Y}_i - Y_i
\rrvert^2\bigr] 
+ \rho\bE\bigl[\llvert
\widehat{Z}_i - Z_i \rrvert^2\bigr] h +
\biggl( \biggl(1+\frac{1}{h}\biggr) \tau_i(Y) + \rho
\tau_i(Z) \biggr).
\end{eqnarray*}
Since $\rho\le(1+h) \rho$, by the stability of the scheme [see
(\ref{eqdef-of-stability-sec4})], it follows that
%
\begin{eqnarray}
\label{eqstabeq}
&& \bE\bigl[\llvert Y_\ti- Y_i \rrvert
^2\bigr] + \frac{\rho}{2} \bE\bigl[\llvert\bar{Z}_\ti-
Z_i \rrvert^2\bigr] h\nonumber
\\
&&\qquad  \le(1+h) (1+c h) \biggl( \bE\bigl[\llvert Y_{\tip} - Y_{\ip}
\rrvert^2\bigr] + \frac{\rho
}{4} \bE\bigl[\llvert
\bar{Z}_{\tip}
- Z_{\ip}\rrvert^2\bigr] h
\biggr)
\\
\nonumber
&&\quad\qquad {} + \biggl( \biggl(1+\frac{1}{h}\biggr)
\tau_i(Y) + \rho\tau_i(Z) + (1+h) \bE[H_i]
\biggr).
\end{eqnarray}
Taking $I_{i}:= \llvert Y_\ti- Y_i \rrvert^2
+ \frac{\rho}{4}\llvert\bar{Z}_\ti- Z_i \rrvert^2 h$,
we have
\begin{eqnarray*}
&&\bE[ I_i ]+ \frac{\rho}{4} \bE\bigl[\llvert
\bar{Z}_\ti- Z_i \rrvert^2\bigr] h
\\
&&\qquad \le(1+h)
(1+c h) \bE[I_{i+1}]
+ \biggl( \biggl(1+\frac{1}{h}\biggr)\tau_i(Y)
+ \rho\tau_i(Z) + (1+h) \bE[H_i] \biggr),
\end{eqnarray*}
and we complete the proof using Lemma~\ref{lemGron}.
\end{pf*}

\subsection{Discretization of the BSDE}\label{subsectiondiscretizationbackwardcomponent}

Let $\ti,\tip\in\pi$. To approximate the solution $(Y,Z)$
to (\ref{canonicFBSDE}), we need two approximations, one for the $Y$
component and one for the $Z$ component. Write (\ref{canonicFBSDE})
over the
interval $[\ti,\tip]$ and take $\cF_\ti$-conditional expectations
to obtain
[recalling that $\Theta_s=(X_s,Y_s,Z_s)$]
%
\begin{equation}
\label{eqnumericsBasicY} Y_{\ti}=\bE_{{\ti}} \biggl[ Y_{t_{i+1}} +
\int_{t_i}^{t_{i+1}} f(s,\Theta_s) \,\uds
\biggr].
\end{equation}
For the $Z$ component, one multiplies (\ref{canonicFBSDE}) (written over
the interval $[\ti,\tip]$) by the Brownian increment, $\Delta
W_\ip:=W_\tip-W_\ti$, and takes $\cF_\ti$-conditional expectations
to obtain
(using It\^o's isometry) the implicit formula
%
\begin{eqnarray}
\label{eqnumericsBasicZ} \qquad 0 &=& \bE_{{\ti}} \biggl[ \Delta W_\ip
\biggl(Y_{t_{i+1}} + \int_{t_i}^{t_{i+1}}f(s,
\Theta_s)\,\uds\biggr) \biggr] -\bE_{{\ti}} \biggl[ \int
_{t_i}^{t_{i+1}} Z_s\,\uds\biggr].
\end{eqnarray}
One now obtains a scheme by approximating the Lebesgue integral via the
\mbox{$\o$-}integration rule (indexed by a parameter $\theta\in[0,1]$),
that is, for
some function~$\psi$
\[
\int_\ti^\tip\psi(s)\,\uds\approx\bigl[\theta
\psi(\ti) + (1-\theta)\psi(\tip) \bigr](\tip-\ti),\qquad\theta\in[0,1].
\]
This type of approximation of the integral is generally known to be of
first order for $\theta\neq
1/2$ and of higher order for $\theta=1/2$ (see end of this section).
Unfortunately, with the results
obtained so far (see Section~\ref{secregularity}) we are not able to prove
the convergence of a general higher order approximation in its full
generality; roughly, the issue boils down to obtaining controls on
$\llvert \partial^2_{xx} v\rrvert $ where $v$ is
solution to (\ref{eqviscosityFBSDE}). However, under the results of
Section~\ref{secregularity}, we do not even know if $\partial^2_{xx} v$
exists. Under the assumption that $f$ is independent of $z$, we can
prove that
the scheme is indeed of higher order (in the $y$ component); the
general case
is left for future research.

From (\ref{eqnumericsBasicZ}) above, we have [compare with
(\ref{Z-bar-ti-pi})]
\[
\bar{Z}_{t_{i}}:= \frac{1}{h} \bE_{t_{i}} \biggl[ \int
_{t_i}^{t_{i+1}} Z_s \,\uds\biggr] =
\frac{1}{h} \bE_{t_{i}} \biggl[ \Delta W_{{i+1}} \biggl(
Y_{t_{i+1}} + \int_{t_i}^{t_{i+1}}f(s,
\Theta_s)\,\uds\biggr) \biggr],
\]
and we approximate $(Z_s)_{s\in[\ti,\tip]}$ via
$\bar Z_\ti$ and $\bar Z_\tip$
rather than $Z_\ti$ or $Z_\tip$. Following the notation for $\Theta$,
we
denote $\bT_\ti:=(X_\ti,Y_\ti,\bar{Z}_\ti)$ and using the
$\o$-integration rule, it follows
%
\begin{eqnarray}
\qquad Y_{t_i} &=& \bE_{t_{i}} \biggl[
Y_{t_{i+1}} + h \bigl[\theta f(\ti,\bT_\ti) + (1-\theta)f(\tip,
\bT_\tip) \bigr]
\nonumber\\[-8pt]\label{eqYexp} \\[-8pt]\nonumber
&&\hspace*{148pt}{} + \int_{\ti}^{\tip}R(s) \,\uds\biggr],
\\
\label{eqZexp} \bar{Z}_{t_{i}} &=& \bE_{t_{i}} \biggl[
\frac{\Delta W_{{i+1}} }{h} \biggl( Y_{t_{i+1}} + (1-\theta)f(\tip,
\bT_\tip)h + \int_{\ti}^{\tip}R(s)\,\uds
\biggr) \biggr],
\end{eqnarray}
where the error term is, for $s\in[\ti,\tip]$, defined as $R(s):=
\theta R^I(s)+(1-\theta)R^E(s)$ where
%
\begin{eqnarray}\label{eqerrorcomponentR-sec4}
R^I(s) &:=& f(s,\Theta_s)-f(\ti,\bar
\Theta_\ti) \quad\mbox{and}
\nonumber\\[-8pt]\\[-8pt]\nonumber
R^E(s)&:=& f(s,
\Theta_s)-f(\tip,\bar\Theta_\tip).
\end{eqnarray}
%

\begin{remark}
For the error analysis here and in the following section, we always understand
the set of RVs $\{(Y_\ti,\bar{Z}_\ti)\}_{\ti\in\pi}$ as the true
solution of the BSDE on the partition points $\ti\in\pi$ but in the
set-up of
(\ref{eqYexp}) and (\ref{eqZexp}). We emphasize that our numerical scheme
does not aim at approximating $Z$ itself over $\pi$ but the family $\{
\bar
Z_\ti\}_{\ti\in\pi}$.
\end{remark}

The order of the approximation depends on the smoothness of driver $f$ and
the properties of the other coefficients. Ignoring the error term $R$,
we find
the discretization scheme stated in (\ref{eqY})--(\ref{eqZ}).
We point out that we aim at first-order schemes, so setting $Z_N=0$ is not
an issue. For a higher order
schemes, $Z_T$ needs to be approximated in a more robust
fashion, for example, following (\ref{boundforZT}),
$Z_T =(\nabla_x g)(X_T)\sigma(T,X_T) \approx(\nabla_x
g)(X_N)\sigma(T,X_N)=Z_N$ (under the extra assumption that $\nabla g$ is
Lipschitz).

We can already estimate the error on the terminal conditions, which is the
first group of terms in the global error estimate from the Fundamental
Lemma~\ref{lemfundamental}.

%
\begin{lemma}
\label{lemmaerroronterminalconditons}
Let \textup{(HX0)}, \textup{(HY0)} hold. Then there exists a constant $c$ such that [recall
(\ref{Z-bar-ti-pi})]
%
\begin{eqnarray}\label{equationerroronterminalconditons}
\bE\bigl[\llvert Y_\tN-Y_N \rrvert
^p\bigr]^{1/p} &\le& c h^{\gamma}\qquad\mbox{for any }p
\geq2 \quad\mbox{and}
\nonumber\\[-8pt]\\[-8pt]\nonumber
\bE\bigl[\llvert\bar{Z}_\tN-Z_N \rrvert^2h\bigr] &\le& c h,
\end{eqnarray}
where $\gamma$ is the order of the approximation $\{X_i\}_{i = 0,
\ldots, N}$
of $X$ [according to (\ref{eqSDEdiscretizationL2})].

Assume that $g\in C^1_b$ and that $\nabla g$ is Lipschitz continuous. Define
$Z_N:=(\nabla_x g)(X_N)\sigma(T,X_N)$ then
$\bE[\llvert\bar{Z}_\tN- Z_N \rrvert^2h] \le c h^2$.
\end{lemma}

\begin{pf}
The error estimate on $Y_\tN$ results from the Lipschitz regularity of~$g$
and the estimate on $\bE[\llvert X_\tN-X_N\rrvert^2]$
given by
(\ref{eqSDEdiscretizationL2}).
For the error estimate on~$Z$, we have $Z_N=0$, and $\bar{Z}_\tN=
Z_T$, which
in turn implies $\bE[\llvert\bar{Z}_\tN- Z_N \rrvert^2h]
= \bE[\llvert Z_T\rrvert^2]
h \le c h$ where we have used (\ref{boundforZT}).

In the case where $g\in C^1_b$ and $\nabla g$ is Lipschitz,
the estimate follows easily using that $\bar Z_T=Z_T=\nabla
g(X_T)\sigma(T,X_T)$ and using the Lipschitz property of $\nabla g$ and
$\sigma$, the Cauchy--Schwarz inequality and (\ref{eqSDEdiscretizationL2}).
\end{pf}

\subsection{Existence and local estimates for the general \texorpdfstring{$\theta$}{theta}-scheme}\label{subsectionwelldefinednessandlocalestimate}

In this subsection, we start the study of the
$\o$-scheme (\ref{eqY})--(\ref{eqZ}) by analyzing \emph{one step}
of it,
that is, going from time $\tip$ to $\ti$. To simplify notation, we define
$f_\ip:=f(\tip,X_\ip,Y_\ip,Z_\ip)$ and $A_\ip:= Y_{i+1} +
(1-\theta) f_\ip
h$.

Along with \textup{(HX0)} and \textup{(HY0)}, we make the temporary assumption that
$Y_\ip,Z_\ip,f_\ip\in L^2$ (this integrability assumption is clearly
satisfied by $Y_N$, $Z_N$ and $f_N$) and analyze how, when $\theta>
0$, this
integrability carries on to
the next time step.

Note that for $\o=0$ (i.e., the explicit case) the scheme step is well defined
as $Y_i$
and $Z_i$ can be easily computed. For $\o>0$, there is no issue in defining
$Z_i$ from~(\ref{eqZ}), but unlike in the Lipschitz case, it is not
immediate that the solution $Y_i$ to the implicit equation (\ref{eqY})
exists. We need to show first that there exists a unique $Y_i$ solving
$Y_i = \bE_i [ A_{i+1} ]+\theta f(\ti,X_i,Y_{i}, Z_{i})h$,
where $\bE_i[A_\ip]$, $X_i$ and $Z_i$ are already known. This follows from
Theorem 26.A in \citeauthor{Zeidler1990B} [(\citeyear{Zeidler1990B}),  page~557].
Define (almost surely) the map $F\dvtx  y \mapsto y - \theta
f (\ti,X_i(\omega),y,
Z_{i}(\omega) )h$. This map is strongly monotone (increasing) in
the sense
of Definition 25.2 in \citet{Zeidler1990B}, that is, there exists a
$\mu> 0$ such
that for all $y,y'$,
\begin{eqnarray*}
\bigl\langle y'-y,F\bigl(y'\bigr)-F(y) \bigr
\rangle&\ge&\mu\bigl\llvert y'-y\bigr\rrvert^2.
\end{eqnarray*}
Indeed, from \textup{(HY0)} and Remark~\ref{rmkoneside} we have
\begin{eqnarray*}
\bigl\langle y'-y, F\bigl(y'\bigr)-F(y) \bigr
\rangle&\ge&(1-\o L_y h )\bigl\llvert y'-y\bigr\rrvert
^2,
\end{eqnarray*}
so if $h < 1/({\theta L_y})$ we can take
$\mu= (1-\o L_y h)>0$. This (almost surely) guarantees the existence of
a unique $Y_i(\omega) = F^{-1} [ \bE_i[(A_\ip)](\omega)]$, as needed.
By the monotonicity of F, $Y_i$ can be quickly computed using, for example,
Newton--Raphson-type methods.
Now, $Y_i$ so defined is only an $\cF_i$-measurable random
variable.\footnote{The previous explanation only justified the
existence of $Y_i$ as a
function
from $\Omega$ to $\bR^k$. To obtain that it is measurable, one
should rather consider the map
$G\dvtx  (a,y) \mapsto(a, y - \theta f(\ti,a,y) h )$, where
$a=(x,z) \in
\R^{d \times k \times d}$ and $f(t,a,y) = f(t,x,y,z)$. It is again
seen to be
strongly monotonous, so it is invertible and Theorem 26.A in
\citet{Zeidler1990B} asserts that
$G^{-1}$ is continuous (Lipschitz in fact), hence measurable.
}

The following proposition guarantees that if $\o>0$, the pair
$(Y_i,Z_i)$ and the term $f_i$ are square integrable provided the
corresponding random variables at $\tip$ also are. So for every $N$, by
iteration,
$(Y_i,Z_i)$ is well defined for $i=N-1,\ldots,0$.
For $\theta\ge1/2$,
this estimate also leads to a uniform bound, as will become clear in
the next
section (Proposition~\ref{thsizebound}).

%
\begin{proposition}
\label{thwellposed}
Let \textup{(HX0)}, \textup{(HY0)} hold, $\o\in[0,1]$ and take $h\le\break 
\min\{1,[4\theta(L_y+3d\theta L_z^2 )]^{-1}\}$. Then there
exists a
constant $c$ such that for any $i\in\{0,\ldots,N-1\}$
%
\begin{eqnarray}
\label{eqlocalsizeestimate}
\nonumber
&& \llvert Y_i\rrvert^2 +
\frac{1}{2d} \llvert Z_{i}\rrvert^2h + 2
\theta^2 \llvert f_i\rrvert^2h^2
\\
&&\qquad \le( 1+ c h ) \bE_i \biggl[ \llvert Y_{i+1}
\rrvert^2 + \frac{1}{8d}\llvert Z_{i+1}\rrvert
^2 h \biggr] + c h
\\
&&\quad\qquad{} + c \bigl(\llvert X_i\rrvert^2 +
\bE_{i} \bigl[\llvert X_\ip\rrvert^2 \bigr]
\bigr)h + 2 (1-\o)^2 \bE_i \bigl[ \llvert
f_\ip\rrvert^2 \bigr] h^2.\nonumber
\end{eqnarray}
\end{proposition}

\begin{pf*}{Proof of Proposition~\ref{thwellposed}}
Let
$i\in\{0,\ldots,N-1\}$.
First, we estimate $ Z_i$. The martingale property of $\Delta W_{i+1}$ yields
%
\begin{eqnarray}
\label{eqvariancereductionZ} Z_ih &=& \bE_i [ \Delta
W_{i+1} A_\ip] = \bE_i \bigl[ \Delta
W_{i+1} \bigl( A_\ip- \bE_i[A_\ip]
\bigr) \bigr].
\end{eqnarray}
By the Cauchy--Schwarz inequality,
%
\begin{eqnarray}
\label{eqestZ} \llvert Z_i\rrvert^2 h & \le& d \bigl\{
\bE_i\bigl[A_\ip^2\bigr]-
\bE_i[A_\ip]^2 \bigr\}.
\end{eqnarray}
We now proceed with the estimation of $Y_i$. We first rewrite
\[
Y_i = \bE_i[ A_\ip] + \theta
f_i h\quad \Longleftrightarrow\quad Y_i - \theta f_i h =
\bE_i[A_\ip]
\]
and then square both sides of the RHS of the above equivalence to
obtain
\[
\llvert Y_i \rrvert^2 = \bE_i[A_\ip]^2
+ 2\o\langle Y_i, f_i \rangle h - \theta^2
\llvert f_i\rrvert^2 h^2.
\]
This simple manipulation allows us to take advantage of the
monotonicity of $f$ [see (\ref{HY0monotonicity})] and will be
reused frequently. By the estimate of Remark~\ref{rmkoneside}, with an
$\alpha>0$ to be chosen later, the previous equality leads to
\begin{eqnarray*}
\llvert Y_i\rrvert^2 &\le& \bE_i[A_\ip]^2
+ 2 \o(L_{y}+ \alpha) \llvert Y_i\rrvert^2
h + \o B(i,\alpha) + \frac{ 3\o
L_{z}^2}{2 \alpha}\llvert Z_i\rrvert
^2 h - \theta^2 \llvert f_i\rrvert
^2 h^2,
\end{eqnarray*}
where $B(i,\alpha):=({3 L^2}h + {3 L_x^2}\llvert X_i\rrvert^2
h)/(2\alpha) $.
Now, for $\epsilon={1}/{d}$, we combine the above estimate with
(\ref{eqestZ}) to obtain
\begin{eqnarray*}
\llvert Y_i\rrvert^2 + \epsilon\llvert
Z_{i}\rrvert^2 h &\le& ( 1 - \epsilon d )
\bE_i[A_\ip]^2 + \epsilon d
\bE_i\bigl[A_\ip^2\bigr]
\\
&& {}+ 2\o(L_{y}+ \alpha) \llvert Y_i\rrvert
^2 h + \frac{ 3
\o
L_{z}^2}{2 \alpha}\llvert Z_i\rrvert
^2 h +\o B(i,\alpha) - \theta^2 \llvert f_i
\rrvert^2 h^2.
\end{eqnarray*}
Reorganizing the terms leads to
%
\begin{eqnarray}\label{eqwellposep1}
&& \bigl( 1 - 2 \o( L_{y}+ \alpha) h \bigr)
\llvert
Y_i\rrvert^2 + \biggl( \epsilon- \frac{ 3 \o L_{z}^2}{2\alpha}
\biggr) \llvert Z_{i}\rrvert^2 h
\nonumber\\[-8pt]\\[-8pt]\nonumber
&&\qquad  \le\bE_i
\bigl[A_\ip^2\bigr] + \o B(i,\alpha) -
\theta^2 \llvert f_i\rrvert^2
h^2.
\end{eqnarray}
Using again Remark~\ref{rmkoneside} with $\alpha'>0$, we obtain
\begin{eqnarray*}
A_\ip^2 &\le& \llvert Y_{i+1}\rrvert
^2 +(1-\o) 2\bigl(L_{y}+ \alpha'\bigr)
\llvert Y_{i+1}\rrvert^2 h
\\
&&{} + (1-\o)\frac{3 L_{z}^2}{2\alpha'}\llvert Z_{i+1}\rrvert^2 h
+ (1-\o) B\bigl(\ip,\alpha'\bigr) + (1-\o)^2 \llvert
f_\ip\rrvert^2 h^2,
\end{eqnarray*}
which in turns leads to
%
\begin{eqnarray}
\label{eqwellposep2} &&
\bigl( 1 - 2\o( L_{y} + \alpha) h \bigr)
\llvert Y_i\rrvert^2 + \biggl( \epsilon-
\frac{ 3\o L_{z}^2}{2\alpha} \biggr) \llvert Z_{i}\rrvert^2 h\nonumber
\\
&&\qquad \le\bigl( 1+ (1-\o) 2 \bigl(L_{y}+
\alpha'\bigr) h \bigr) \bE_i \bigl[ \llvert
Y_{i+1} \rrvert^2 \bigr]
\nonumber\\[-8pt]\\[-8pt]\nonumber
&&\quad\qquad{} + (1-\o)\frac{3 L_{z}^2}{2\alpha'} \bE_i \bigl[
\llvert Z_{i+1}\rrvert^2 \bigr] h + H^\theta_i
\\
&&\quad\qquad{} + \o B(i,\alpha) + (1-\o)\bE_i
\bigl[B\bigl(i+1,\alpha'\bigr)\bigr],\nonumber
\end{eqnarray}
where
%
\begin{eqnarray}
\label{eqwellposeH} H^{\o}_i&:=& (1-\o)^2
\bE_i \bigl[ \llvert f_\ip\rrvert^2 \bigr]
h^2 - \o^2 \llvert f_i\rrvert^2
h^2.
\end{eqnarray}
Now,\vspace*{-2pt} we choose $\alpha= 3 d \theta L_z^2$
(so that $\epsilon- \frac{ 3\o L_{z}^2}{2\alpha} = \frac{1}{2d}$)
and $\alpha' = 24 d(1-\theta)L_z^2$
[so\vspace*{2pt} that $(1-\o)\frac{3 L_{z}^2}{2\alpha'} \le\frac{1}{16d} $].
Since
$h\le\min\{1,[ 4\theta( L_y + 3d\theta L_z^2 )]^{-1}\} $ it is true
that $ 2\o( L_{y}+ \alpha) h \le{1}/{2}$.
We also observe that for $x\in[0,1/2]$, $1 \le{1}/({1-x}) \le
1+2x \le2$ and as a consequence
\begin{eqnarray*}
&& \llvert Y_i\rrvert^2 + \frac{1}{2 d} \llvert
Z_{i}\rrvert^2 h
\\
&&\qquad \le\bigl( 1 + 4 \o( L_{y}+ \alpha) h \bigr) \bigl( 1+ 2(1-
\o) \bigl(L_{y}+\alpha'\bigr) h \bigr)
\bE_i \bigl[ \llvert Y_{i+1} \rrvert^2 \bigr]
\\
&&\quad\qquad{} + \frac{1}{8 d} \bE_i \bigl[ \llvert
Z_{i+1}\rrvert^2 \bigr] h + 2 \o B(i,\alpha) + 2(1-\o)
\bE_i \bigl[ B\bigl(i+1,\alpha'\bigr) \bigr] + 2
H^{\o}_i.
\end{eqnarray*}
Defining $c:=
4\o( L_{y}+ \alpha) + 2 (1-\o)
(L_{y}+ \alpha') + 8\o( L_{y}+ \alpha)(1-\o) (L_{y}+ \alpha')$, we clearly
have
\begin{eqnarray*}
&& \bigl( 1 + 4 \o( L_{y}+ \alpha) h \bigr) \bigl( 1+ 2(1-\o)
\bigl(L_{y}+ \alpha'\bigr) h \bigr) \leq1+ch.
\end{eqnarray*}
We can now conclude to the announced estimate
%
\begin{eqnarray}
\label{eqwellposep3}
&& \llvert Y_i\rrvert^2 +
\frac{1}{2d} \llvert Z_{i}\rrvert^2 h\nonumber
\\
&&\qquad \le (1+c h)
\biggl( \bE_i \bigl[\llvert Y_{i+1} \rrvert^2
\bigr] +\frac{1}{8d} \bE_i \bigl[\llvert Z_{i+1}
\rrvert^2 \bigr] h \biggr)
\\
&&\quad\qquad{} + 2 \o B(i,\alpha) + 2(1-\o)\bE_i\bigl[B\bigl(i+1,\alpha
'\bigr)\bigr] + 2 H^{\o}_i,\nonumber
\end{eqnarray}
provided one passes the term $-2 \theta^2
\llvert f_i\rrvert^2 h^2$ in $2H^{\o}_i$ to the LHS. This
completes the proof.
\end{pf*}

\subsection{Local time-discretization error}\label{subsectionlocaldiscretizationerror}

As announced in Sections~\ref{subsectionmainresultsonconvergenceandorganisation} and
\ref{subsectionfundamentallemma},
we now proceed to estimating the one-step discretization errors $\tau_i(Y)$
and $\tau_i(Z)$
[see~(\ref{eqauxlocalestimateYYZZ}) for the definition], and then
their sum.
We thus obtain an estimate for the second group of terms in estimate
(\ref{eqauxfullestimateerrorYYZZ}),
which is summarized in Proposition~\ref{propglobaldiscretizationerror}.

We\vspace*{1pt} follow the notation of Section~\ref{subsectionfundamentallemma} and
write, for $i=0,1,\ldots,N-1$,
$\widehat{Y}_i=Y_{i,(Y_\tip,\bar{Z}_\tip)}$ and
$\widehat{Z}_i=Z_{i,(Y_\tip,\bar{Z}_\tip)}$; that is,
$(\widehat{Y}_i,\widehat{Z}_i)$ is the solution to
%
\begin{eqnarray}
 \widehat{Y}_i &=& \bE_{t_{i}} \bigl[
Y_{\tip} + (1-\theta)f(t_\ip,X_\ip,Y_{\tip},
\bar{Z}_{\tip})h \bigr]
\nonumber\\[-8pt]\label{eqdefinitionofhatY} \\[-8pt]\nonumber
&&{}  +\theta f(t_i,X_i, \widehat{Y}_{i},\widehat{Z}_{i})h,
\\
\label{eqdefinitionofhatZ} \widehat{Z}_i &=& \bE_{t_{i}} \biggl[
\frac{\Delta W_{{i+1}}}h \bigl( Y_{t_{i+1}} + (1-\theta)f(t_\ip,X_{\ip
},Y_{\tip},
\bar{Z}_{\tip})h \bigr) \biggr].
\end{eqnarray}

%
\begin{remark} \label{rmkhat}
We know from Proposition~\ref{thwellposed} that, under the assumption
$h\le\min\{1, [4\theta( L_y + 3 d\theta L_z^2
)]^{-1}\}$,
the RV's $\{(\widehat{Y}_i,\widehat{Z}_i)\}_{i=0,\ldots,N}$ are well
defined and square integrable.
Furthermore, estimate (\ref{eqlocalsizeestimate}),
together with the growth assumption on $f$ in \textup{(HY0)},
(\ref{eqSDEdiscretizationL2}) for $X_\ip$,
Theorem~\ref{theo-existenceuniquenesscanonicalFBSDE} for $Y_\tip$ and
Corollary~\ref{barZinLp} for $\bar{Z}_\tip$,
guarantee immediately that for any $p \ge2$, there exists a constant $c$
such that
%
\begin{eqnarray}
\label{equniformsizeestimateforhatY} &&\sup_{N \in\N} \max_{i= 0,
\ldots, N} \bE
\bigl[\llvert\widehat{Y}_i\rrvert^p\bigr] \le c.
\end{eqnarray}
This fact will be needed later in Section~\ref{sectionthetaimplicitschemes} (in Lemma~\ref
{lemcontrolsumlackofregularity}).
\end{remark}


The next result estimates the one-step discretization errors $\tau_i(Y)$
and $\tau_i(Z)$ of the approximation in terms of the error process $R$ [as
defined in (\ref{eqerrorcomponentR-sec4})]. Afterward, we discuss the
behavior of $R$ itself.

%
\begin{lemma}
\label{lemerrorR}
Let \textup{(HX0)} and \textup{(HY0)} hold and assume that $h\le1/ (4\theta L_y)$. Then for
any $\o\in[0,1]$ there
exists a constant $c$ such that for any $i\in\{0,\ldots,N-1\}$
%
\begin{eqnarray*}
\bE\bigl[\llvert Y_{t_i} - \widehat{Y}_i\rrvert
^2 + \llvert\bar{Z}_{t_{i}}-\widehat{Z}_i
\rrvert^2 h\bigr] & \le& c \bE\biggl[ \biggl( \int
_{t_i}^{t_{i+1}} R(s) \,\uds\biggr)^2 \biggr]
+ c L_x^2 \mathrm{ERR}_{\pi}(X)^2
h^2. 
\end{eqnarray*}
%
%
\end{lemma}

\begin{pf} Let $i\in\{0,\ldots,N-1\}$. Recalling (\ref{eqZexp}),
(\ref{eqdefinitionofhatZ}) and the definition $\bT_\ti:=(X_\ti,Y_\ti,\bar
{Z}_\ti)$
we have
\begin{eqnarray*}
\bar{Z}_{t_{i}}-\widehat{Z}_i &=& \bE_{{i}}
\biggl[ \frac{\Delta W_{{i+1}} }{h} \biggl( (1-\o) \bigl[f(\tip,\bT_\tip
) -
f(t_\ip,X_{\ip},Y_{\tip},\bar{Z}_{\tip})
\bigr] h
\nonumber\\[-8pt]\\[-8pt]\nonumber
&&\hspace*{221pt}{} + \int_{\ti}^{\tip}R(s)\,\uds\biggr) \biggr],
\end{eqnarray*}
which by the Cauchy--Schwarz inequality and the Lipschitz property of
the map $x\mapsto f(\cdot,x,\cdot,\cdot)$ leads to
\[
h \llvert\bar{Z}_{t_{i}}-\widehat{Z}_i\rrvert
^2 \le2 d \bE_i \biggl[ \biggl( \int
_{t_i}^{t_{i+1}} R_{u} \,\udu
\biggr)^2 \biggr] + 2 d (1-\o)^2 L_x^2
\bE_{i} \bigl[ \llvert X_{\tip} - X_{\ip
}\rrvert
^2 \bigr]h^2.
\]
For the $Y$-part, similarly by recalling (\ref{eqYexp}) and
(\ref{eqdefinitionofhatY}), we have
\begin{eqnarray*}
\hspace*{-5pt}&& Y_{t_i} - \widehat{Y}_i
\\
\hspace*{-5pt}&&\qquad =  \bE_{i} \biggl[
\int_{ti}^{\tip}R(s)\,\uds+ (1-\o) \bigl(f(\tip,
\bT_\tip) -f(t_\ip,X_{\ip},Y_{\tip},
\bar{Z}_{\tip}) \bigr)h \biggr]
\\
\hspace*{-5pt}&&\quad\qquad{}+ \o\bigl( f(\ti,\bT_\ti) - f(\ti,X_i,
\widehat{Y}_{i},\widehat{Z}_{i}) \bigr) h
\\
\hspace*{-5pt}&&\qquad =  \bE_{i} \biggl[ \int_{\ti}^{\tip}R(s)
\,\uds+ (1-\o) \bigl(f(\tip,\bT_\tip) - f(t_\ip,X_{\ip},Y_{\tip},
\bar{Z}_{\tip}) \bigr)h \biggr]
\\
\hspace*{-5pt}&&\quad\qquad{}+ \o\bigl( f(\ti,X_\ti,Y_{\ti},
\bar{Z}_{\ti}) - f(\ti,X_i,Y_{\ti},
\widehat{Z}_{i}) \bigr)h
\\
\hspace*{-5pt}&&\quad\qquad {}+ \o\bigl( f(\ti,X_i,Y_{\ti},\widehat{Z}_{i})
- f(\ti,X_i,\widehat{Y}_{i},\widehat{Z}_{i})
\bigr)h.
\end{eqnarray*}
To obtain the estimate for $\llvert Y_{t_i} - \widehat{Y}_i\rrvert
^2$, similarly as in
the proof of Proposition~\ref{thwellposed}, we pass the last term in
the RHS to
the LHS, square both
sides, expand the
square on the LHS, pass the cross term to the RHS and dominate it on
the RHS
using~(\ref{HY0monotonicity}). By collecting only the convenient terms
in the LHS and using assumption \textup{(HY0)} on the RHS, we get
\begin{eqnarray*}
\llvert Y_{t_i} - \widehat{Y}_i\rrvert^2
&\le& 3 \bE_{i} \biggl[ \int_{\ti}^{\tip}R(s)
\,\uds\biggr]^2 + 6 \o^2 L_{z}^2
\llvert\bar{Z}_{\ti} - \widehat{Z}_{i}\rrvert
^2 h^2 + 2 \o L_y \llvert
Y_{\ti}- \widehat{Y}_{i}\rrvert^2 h
\\
&&{}+ 6 \o^2 L_{x}^2\llvert X_{\ti} -
X_{i}\rrvert^2h^2 + 3 (1-\o)^2
L_{x}^2\bE_i\bigl[\llvert X_{\tip} -
X_{\ip}\rrvert^2\bigr] h^2,
\end{eqnarray*}
which implies, using the estimate for $\llvert\bar Z_{\ti} -
\widehat{Z}_i\rrvert^2$,
that
\begin{eqnarray*}
&& ( 1 - 2 \o L_y h ) \llvert Y_{t_i} - \widehat{Y}_i\rrvert^2
\\
&&\qquad \le\bigl( 3 + 12 d \o^2 L_{z}^2 h \bigr)
\bE_i \biggl[ \biggl( \int_{t_i}^{t_{i+1}}
R(s) \,\uds\biggr)^2 \biggr] + 6 \o^2 L_{x}^2
\llvert X_{\ti} - X_{i}\rrvert^2h^2
\\
&&\quad\qquad{}+ 3 (1-\o)^2 L_{x}^2 \bigl( 1 + 4d
\o^2 L_z^2 h\bigr) \bE_i \bigl[
\llvert X_{\tip} - X_{\ip}\rrvert^2 \bigr]
h^2.
\end{eqnarray*}
Noting\vspace*{1pt} that $h$ is such that $2 \o L_y h \le1/2$ and
by combining the estimates for $\llvert Y_{\ti} - \widehat
{Y}_i\rrvert^2$ and
$\llvert\bar Z_{\ti} - \widehat{Z}_i\rrvert^2$ the sought
result follows after taking
expectations and using~(\ref{eqSDEdiscretizationL2}) for
$X$.
\end{pf}


We now estimate the integral of the error function $R$ [see
(\ref{eqerrorcomponentR-sec4})].

%
\begin{lemma}
\label{theocontrollingIntR}
Let \textup{(HX0)}, \textup{(HY0$_{\mathrm{loc}}$)} hold.
Then there exists $c>0$ such that, for any $\o\in[0,1]$ and
$i\in\{0,\ldots,N-1\}$,
\begin{eqnarray*}
&& \bE\biggl[ \biggl( \int_{\ti}^{\tip}R(s)\,\uds
\biggr)^2 \biggr]
\\
&&\qquad  \le c L_t^2 h^3 + c L_x^2
\mathrm{REG}_\pi(X)^2 h^2 + c
L_y \mathrm{REG}_\pi(Y)^2 h^2
\\
&&\quad\qquad{}+ c L_z^2 \bE\biggl[ \int_{t_i}^{t_{i+1}}
\llvert Z_s - \bar{Z}_\ti\rrvert^2 \,\uds+
\int_{t_i}^{t_{i+1}} \llvert Z_s -
\bar{Z}_{\tip}\rrvert^2 \,\uds\biggr] h.
\end{eqnarray*}
\end{lemma}

\begin{pf}
Following from (\ref{eqerrorcomponentR-sec4}), we estimate $R$ via
$R^I$ and $R^E$: using \textup{(HY0$_{\mathrm{loc}}$)}, Cauchy--Schwarz's inequality and
Fubini's
theorems we have\vspace*{1pt}
[recall that $\Theta=(X,Y,Z)$ and $\bT_\ti=(X_\ti,Y_\ti,\bar
{Z}_\ti)$]
\begin{eqnarray*}
&& \bE\biggl[ \biggl(\int_\ti^\tip
R^I(s) \,\uds\biggr)^2 \biggr]
\\
&&\qquad = \bE\biggl[ \biggl( \int_{t_i}^{t_{i+1}} \bigl[f(s,
\Theta_s) \pm f(s,X_s,Y_\ti,Z_s) -
f(\ti,\bT_\ti)\bigr] \,\uds\biggr)^2 \biggr]
\\
&&\qquad  \le2 h \bE\biggl[ \int_{t_i}^{t_{i+1}} 3
L_y^2 \bigl( 1 + \llvert Y_s\rrvert
^{2(m-1)} + \llvert Y_\ti\rrvert^{2(m-1)} \bigr)
\llvert Y_s - Y_\ti\rrvert^2 \,\uds+
\alpha_i \biggr]
\\
&&\qquad  \le2 h \biggl( \int_{t_i}^{t_{i+1}}
L_y^2 \bE\bigl[ 3 \bigl( 1 + \llvert Y_s
\rrvert^{4(m-1)} + \llvert Y_\ti\rrvert^{4(m-1)}
\bigr) \bigr]^{1/2} \bE\bigl[ \llvert Y_s -
Y_\ti\rrvert^4 \bigr]^{1/2} \,\uds
\\[-3pt]
&&\hspace*{301pt}{} + \bE[\alpha_i] \biggr),
\end{eqnarray*}
where
$\alpha_i = 3 \int_{t_i}^{t_{i+1}} [ L_t^2\llvert s-\ti
\rrvert + L_x^2
\llvert X_s-X_\ti\rrvert^2 +
L_z^2 \llvert Z_s - \bar{Z}_\ti\rrvert^2 ] \,\uds
$.

Using Theorem~\ref{theo-existenceuniquenesscanonicalFBSDE}
to deal with the $Y$ component, this yields the estimate
\begin{eqnarray*}
&& \bE\biggl[ \biggl(\int_\ti^\tip
R^I(s) \,\uds\biggr)^2 \biggr]
\\
&&\qquad  \le 3 L_t^2
h^3 + 6 L_x^2 \mathrm{REG}_\pi(X)^2
h^2 + 18 c L_y^2 \mathrm{REG}_\pi(Y)^2
h^2
\\
&&\quad\qquad{} + 6 L_z^2 \bE\biggl[ \int
_{t_i}^{t_{i+1}} \llvert Z_s -
\bar{Z}_\ti\rrvert^2 \,\uds\biggr] h.
\end{eqnarray*}
Similar arguments allow a similar estimate for $R^E$ but with terms
$\tip$,
$X_\tip$,
$Y_\tip$ and $\bar{Z}_\tip$ instead of $\ti$, $X_\ti$, $Y_\ti$
and $\bar
Z_\ti$.
\end{pf}

\subsubsection*{The trapezoidal integration case}

Here, we refine the analysis of the local discretization error from Lemma~\ref{theocontrollingIntR} for the case $\theta=1/2$ in order to
obtain better global error estimates. We drop the $Z$-dependence in $f$
due to
lacking regularity results. Approximation (\ref{eqZ})
is found by approximating the last integral on the RHS of
(\ref{eqnumericsBasicZ}) by a first-order approximation and so it
should be
clear that at best the overall order of the scheme would be one (in the next
section we propose a candidate for higher order approximation of $Z$).
We point out nonetheless that many reaction--diffusion
equations have a driver $f$ that only depends on $Y$.
For ease of the presentation, we also assume that $f$ does
not depend on the forward process $X$ and omit the time dependence
(these can
be easily extended).

We write, similarly to (\ref{eqYexp}),
\begin{eqnarray*}
 \int_{t_i}^{t_{i+1}}f(Y_s)\,\uds&=&
\frac{h}2 \bigl[f(Y_{\ti}) + f(Y_{\tip}) \bigr] + \int
_{\ti}^{\tip}R(s)\,\uds,
\end{eqnarray*}
with
\[
R(s):= f(Y_s)-\tfrac{1}2 \bigl[f(Y_{\ti})
+ f(Y_{\tip
}) \bigr],
\]
where, using integration by parts, it can be shown [see
\citet{SuliMayers2003}] that
%
\begin{eqnarray}
\label{eqtraperr} &&\bE\biggl[ \biggl( \int_{\ti}^{\tip}R(s)
\,\uds\biggr)^2 \biggr] \leq\frac{h^6}{12^2} \bE\Bigl[\sup
_{\ti\le t\le\tip} \bigl\llvert\partial_{yy}^2f(Y_t)
\bigr\rrvert^2 \Bigr].
\end{eqnarray}
Hence, in the special case where the driver of FBSDE under
consideration does
not depend on the process $(Z_t)_{0\le t \le T}$ we can take full advantage
of
trapezoidal integration rule provided that the second derivatives of $f$
in the $y$ variable has polynomial growth, so that there exists a constant
$c$
for which
\[
\max_{\ti,\tip\in\pi} \bE\Bigl[ \sup_{\ti\le t\le\tip}\bigl
\llvert\partial_{yy}^2f(Y_t)\bigr\rrvert
^2 \Bigr]\le c.
\]


\subsubsection*{The result on the sum of local errors}

In view of the above lemmas [as well as estimate
(\ref{eqSDEdiscretizationL2}) and the path-regularity Theorem
\ref{thregularity}], we can state the following estimates on the sum
of the
one-step discretization errors, as appearing in the global error estimate
(\ref{eqauxfullestimateerrorYYZZ})
of Lemma~\ref{lemfundamental}.

%
\begin{proposition}
\label{propglobaldiscretizationerror}
Let \textup{(HX0)}, \textup{(HY0$_{\mathrm{loc}}$)} hold and $h\le\min\{1,[4\theta
(L_y+3d\theta
L_z^2 )]^{-1}\} $. For the scheme (\ref{eqY})--(\ref{eqZ})
we have the following local error estimates:
\begin{longlist}[(ii)]
\item[(i)] For any $\theta\in[0,1]$ $\exists c>0$ such that
$\sum_{i=0}^{N-1} \frac{\tau_i(Y)}{h} \le c h$
and $\sum_{i=0}^{N-1} \tau_i(Z) \le c h^2$.
%
%
\item[(ii)] Take $\theta=1/2$ and scheme (\ref{eqY}). Assume additionally
that $f\in C^2$ does not depend on $(t,x,z)$ and
$\partial^2_{yy}f$ has at most polynomial growth, then there exists
$c>0$ such
that
$\sum_{i=0}^{N-1} \frac{\tau_i(Y)}{h}
\le c h^4$.
\end{longlist}
\end{proposition}

%
%

\begin{pf} Recall the definition of $\tau_i(Y)$ and $\tau_i(Z)$
given in (\ref{eqauxlocalestimateYYZZ}). The proof of case (i) is simple:
inject in the estimate of Lemma~\ref{lemerrorR} that
of Lemma~\ref{theocontrollingIntR} and then sum over
$i=0$ to $i=N-1$.
On the resulting inequality,
\begin{eqnarray*}
%
%
\sum_{i=0}^{N-1}
\tau_i(Y) + \tau_i(Z) & \le& c L_t^2
h^2 + c L_x^2 \mathrm{REG}_\pi(X)^2
h + c L_y^2 \mathrm{REG}_\pi(Y)^2
h
\\
&&{}+ c L_z^2 \mathrm{REG}_\pi(Z)^2
h + c L_x^2 \mathrm{ERR}_\pi(X)^2
h,
\end{eqnarray*}
apply (\ref{eqSDEdiscretizationL2}) for $\mathrm{ERR}_{\pi}(X)$,
the path-regularity result (\ref{eqSDEpathregularity})
for $\mathrm{REG}_\pi(X)$, and the path-regularity Theorem~\ref
{thregularity}
for $\mathrm{REG}_\pi(Y)$
and $\mathrm{REG}_\pi(Z)$.
Under \textup{(HX0)} and (HY0$_{\mathrm{loc}}$)
the resulting inequality is
$\sum_{i=0}^{N-1} ( \tau_i(Y)+\tau_i(Z) ) \le c h^2$. The
statement's inequalities now follows.

For the proof of case (ii), remark that (\ref
{eqdefinitionofhatY}) is now
independent of $Z$, and hence using Lemma~\ref{lemerrorR} in combination
with (\ref{eqtraperr}) instead of Lemma~\ref{theocontrollingIntR} yields
the result.
\end{pf}

%
\begin{remark} \label{remarkhigherratewhenhigherregularityoff}
Under the assumption that $f$ only depends on $y$ (i.e., take
$L_t=L_x=L_z=0$) the methodology used above yields that the
first terms in the global error $\mathrm{ERR}_\pi(Y,Z)$ [see
(\ref{eqauxfullestimateerrorYYZZ})] is controlled only by
$\mathrm{ERR}_\pi(X)$ and $\mathrm{REG}_\pi(Y)$. The
term $\mathrm{REG}_\pi(Y)$ follows from the sum of the local discretization
errors, as can be seen from above, while $\mathrm{ERR}_\pi(X)$
follows from
the approximation of the terminal
condition.

These abstract estimates suggest that under stronger regularity assumptions
on $f$ [stronger than \textup{(HY0$_{\mathrm{loc}}$)}], one may improve the estimates
on $\tau(Y)$ and therefore obtain a higher convergence rate. Such
developments are left for future research.
\end{remark}

\section{Convergence of the implicit-leaning schemes \texorpdfstring{($1/2\leq\theta\leq1$)}{(1/2<=theta<=1)}}\label{sectionthetaimplicitschemes}

In this section, we complete the convergence proof of the
theta scheme (\ref{eqY})--(\ref{eqZ}) for $\theta\in[1/2,1]$ as
stated in
Theorem~\ref{thmainresult}. In view of the Fundamental Lemmas~\ref{lemfundamental},~\ref{lemmaerroronterminalconditons} and Proposition~\ref{propglobaldiscretizationerror}, what remains to study is the stability
of the scheme
and estimate $\cR^{\cS}(H)$.

\subsection{Integrability for the \texorpdfstring{$\theta$}{theta}-scheme, for \texorpdfstring{$1/2\leq\theta\leq1$}{1/2<=theta<=1}}\label{subsectionsizeestimatethetaimplicitschemes}

We now show that for $\theta\ge1/2$ the scheme cannot explode as $h$
vanishes. These $L^p$ estimates will be useful in obtaining the
stability of
the scheme.

%
\begin{proposition}
\label{thsizebound}
Let \textup{(HX0)}, \textup{(HY0)} hold, and $h \le\min\{1,[4\theta(L_y+3d\theta
L_z^2 )]^{-1}\} $ and let $\o\in[1/2,1]$. Then for any $p \ge1$,
there exists a constant $c$ such that
\[
\max_{i=0,\ldots,N} \bE\bigl[ \llvert Y_i\rrvert
^{2p} \bigr] +\sum_{i=0}^{N-1}\bE
\bigl[ \bigl(\llvert Z_{i}\rrvert^2 h\bigr)^p
\bigr] \le c \bigl(1 + \bE\bigl[ \llvert X_N\rrvert^{2 m p}
\bigr] \bigr).
\]
\end{proposition}

\begin{pf}
Take $i\in\{0,\ldots,N-1\}$ and define the quantity $I_i:=\llvert
Y_i\rrvert^2 + \frac{1}{8d} \llvert Z_{i}\rrvert^2 h +
\o^2\llvert f(\ti,X_i,Y_i,Z_i)\rrvert^2 h^2$.\vspace*{1pt}
By Proposition~\ref{thwellposed} and that $(1-\theta)^2 \le
\theta^2$,
for $\o\in[1/2,1]$, we have for $\beta_i:= c + c (\llvert X_i\rrvert
^2 + \llvert X_\ip\rrvert^2)$ the inequality
%
\begin{eqnarray}\label{eqLpZ}
&& I_i + \frac{3}{8d} \llvert Z_{i}
\rrvert^2h \le e^{ch} \bE_i [
I_\ip] + \bE_i [ \beta_i ] h.
\end{eqnarray}
As a consequence of Lemma~\ref{lemGron}, we know that, since $\beta
_j \ge0$,
\[
I_i + \frac{3}{8d} \bE_i \Biggl[ \sum
_{j=i}^{N-1} \llvert Z_j\rrvert
^2 h \Biggr] \le e^{c T} \Biggl( \bE_i[
I_N ] + \sum_{j=i}^{N-1}
\bE_i[\beta_j] h \Biggr),
\]
in particular, using Jensen's inequality, we obtain further
\begin{eqnarray*}
&&\llvert I_i\rrvert^p \le2^{p-1}
e^{c p T} \Biggl( \bE_i\bigl[ \llvert I_N\rrvert
^p \bigr] + (N h)^{p-1}\sum_{j=0}^{N-1}
\bE_i\bigl[ \llvert\beta_j\rrvert^p\bigr]
h \Biggr).
\end{eqnarray*}
This then implies, thanks to \textup{(HY0)},
\begin{eqnarray*}
&& \max_{i=0, \ldots, N} \bE\bigl[\llvert I_i\rrvert
^p\bigr]  \le c \bigl(1 + \bE\bigl[ \llvert X_N\rrvert
^{2 m p} \bigr] \bigr)
\\
&&\qquad \Rightarrow\quad \max_{i=0, \ldots, N} \bE\bigl[\llvert Y_i
\rrvert^{2 p}\bigr] \le c \bigl(1 + \bE\bigl[ \llvert X_N
\rrvert^{2 m p} \bigr] \bigr).
\end{eqnarray*}
From (\ref{eqLpZ}), we also have
\begin{eqnarray*}
&& I_i^p + \biggl(\frac{3}{8d} \biggr)^p
\bigl(\llvert Z_{i}\rrvert^2h \bigr)^p
\\
&&\qquad  \le
\biggl(I_i + \frac{3}{8d} \llvert Z_{i}\rrvert
^2h \biggr)^p
\\
&&\qquad  \le e^{ c p h} \bE_i\bigl[I_{i+1}^p
\bigr] + \sum_{j=1}^{p}\pmatrix{p
\cr
j}
\bigl( e^{ch} \bE_i [ I_\ip]
\bigr)^{p-j} \bigl(\bE_i[\beta_i] h
\bigr)^j,
\end{eqnarray*}
so that, applying again Lemma~\ref{lemGron} along with H\"{o}lder's and
Jensen's inequalities
we have
\begin{eqnarray*}
&& \biggl( \frac{3}{8d} \biggr)^p \bE\Biggl[\sum
_{i=0}^{N-1} \bigl(\llvert Z_{i}\rrvert
^2h \bigr)^p \Biggr]
\\
&& \qquad\le e^{c p T} \bE\bigl[\llvert I_{N}\rrvert
^p\bigr] + \sum_{i=0}^{N-1}
e^{c i h} \sum_{j=1}^{p}\pmatrix{p
\cr
j} \bE\bigl[ \bigl( e^{c h} \bE_i [ I_\ip
] \bigr)^{p-j} \bigl(\bE_i[\beta_i] h
\bigr)^j \bigr]
\\
&& \qquad\le e^{c p T} \bE\bigl[\llvert I_{N}\rrvert
^p\bigr] + e^{c p T} \sum_{i=0}^{N-1}
\sum_{j=1}^{p}\pmatrix{p
\cr
j} \bigl( \bE
\bigl[ \llvert I_\ip\rrvert^p \bigr]
\bigr)^{(p-j)/p} \bigl(\bE\bigl[\llvert\beta_i\rrvert
^{p}\bigr] \bigr)^{j/p} h
\\
&& \qquad\le e^{c p T} \bE\bigl[\llvert I_{N}\rrvert
^p\bigr]
\\
&&\quad\qquad{} + e^{c p T} T \sum_{j=1}^{p}
\pmatrix{p
\cr
j} \Bigl( \max_{i=0, \ldots, N} \bE\bigl[ \llvert
I_\ip\rrvert^p \bigr] \Bigr)^{(p-j)/p} \Bigl(\max
_{i=0, \ldots, N}\bE\bigl[ \llvert\beta_i\rrvert
^{p}\bigr] \Bigr)^{j/p}.
\end{eqnarray*}
Due to \textup{(HY0)} and the previous estimates we arrive, as required, at
\[
\bE\Biggl[\sum_{i=0}^{N-1} \bigl(\llvert
Z_{i}\rrvert^2h \bigr)^p \Biggr] \le c
\bigl(1 + \llvert X_N\rrvert^{2 m p} \bigr).
\]\upqed
\end{pf}

\subsection{Stability of the \texorpdfstring{$\theta$}{theta}-scheme for \texorpdfstring{$1/2\leq\theta\leq1$}{1/2<=theta<=1}}\label{subsectionstabilitythetaimplicitschemes}

We now study the stability of the scheme in the sense of
(\ref{eqdef-of-stability-sec4}). We fix $i \in\{ 0, \ldots, N-1 \}$ and
estimate the distance between the outputs
$(\widehat{Y}_i,\widehat{Z}_i)$ [see
(\ref{eqdefinitionofhatY})--(\ref{eqdefinitionofhatZ})] and $(Y_i,Z_i)$
[see (\ref{eqY})--(\ref{eqZ})] as
a function of the distance between the inputs $(Y_\tip,\bar{Z}_\tip
)$ and
$(Y_\ip,Z_\ip)$.

We use the notation
$\delta Y_\ip= Y_\tip- Y_\ip$,
$\delta Z_\ip:= \bar{Z}_\tip- Z_\ip$, as well as
\begin{eqnarray*}
\delta f_\ip&=& f(\tip,X_\ip,Y_\tip,
\bar{Z}_\tip) - f(\tip,X_\ip,Y_\ip,Z_\ip)
\end{eqnarray*}
and
\begin{eqnarray*}
\delta A_\ip&=& \delta Y_\ip+ (1-\theta) \delta
f_\ip h.
\end{eqnarray*}
Then, denoting by
$\widehat{\delta Y_i} = \widehat{Y}_i - Y_i$,
$\widehat{\delta Z_i} = \widehat{Z}_i - Z_i$ and
$\delta\widehat{f}_i = f(\ti,X_i,\widehat{Y}_i,\widehat{Z}_i) -
f(\ti,X_i,Y_i,Z_i)$,
we can write that [compare with
(\ref{eqdefinitionofhatY}), (\ref{eqdefinitionofhatZ}),
(\ref{eqY}) and (\ref{eqZ})]
\begin{eqnarray*}
&&\widehat{\delta Y_i} = \bE_i [ \delta
A_\ip] + \theta\delta\widehat{f}_i h \quad\mbox{and}
\quad\widehat{\delta Z_i} = \bE_i \biggl[
\frac{1}h{\Delta W_{i+1} } \delta A_\ip\biggr].
\end{eqnarray*}

%
\begin{proposition}
\label{thstability}
Let \textup{(HX0)} and \textup{(HY0)} hold. Then there exists a constant $c$ for any
$i\in\{0,\ldots,N-1\}$ and $h \le\min\{1,[4\theta(L_y+d\theta
L_z^2 )]^{-1}\} $ such that
\[
\llvert\widehat{\delta Y_i}\rrvert^2 +
\frac{1}{2d} \llvert\widehat{\delta Z_i}\rrvert
^2h \le( 1+ c h ) \bE_i \biggl[ \llvert\delta
Y_{i+1} \rrvert^2 + \frac{1}{8d}\llvert\delta
Z_{i+1}\rrvert^2 h \biggr] + 2H^{\o}_i,
\]
where
%
\begin{equation}
\label{eqH} H^{\o}_i = (1-\o)^2
\bE_i \bigl[ \llvert\delta f_\ip\rrvert^2
\bigr] h^2 - \theta^2 \bE_i \bigl[ \llvert
\delta\widehat{f}_i\rrvert^2 \bigr] h^2.
\end{equation}
\end{proposition}

\begin{pf}
This proof is very similar to that of Proposition
\ref{thwellposed}, therefore we omit~it.
\end{pf}

We want to control $\cR^{\cS}(H)$.
For the fully implicit scheme ($\theta=1$), we have
$ H^{\theta}_{i} = - \llvert\delta\widehat{f}_i\rrvert^2
h^2 \le0$ and hence the
implicit scheme is stable in the classical sense [of \citeauthor{Chassagneux2012} (\citeyear{Chassagneux2012,Chassagneux2013})]
as we have $\cR^{\cS}(H)\leq0$. The next lemma provides, in our
setting, a
control on $\cR^{\cS}(H)$ for any $\o\geq1/2$.

%
\begin{lemma}
\label{lemcontrolsumlackofregularity}
Let \textup{(HX0)}, \textup{(HY0$_{\mathrm{loc}}$)} hold and take the family $\{H_i\}_{i=0,\ldots,N-1}$
defined in (\ref{eqH}). Then for $\o\geq1/2$ there exists a
constant $c$
such
that
\begin{eqnarray*}
\cR^\cS(H) &=& \max_{i=0,\ldots,N-1} \bE\Biggl[ \sum
_{j=i}^{N-1} e^{c(j-i)h}
H^{\o}_{j} \Biggr]
\\
& \le& c \bE\bigl[\llvert Y_{t_{N}} -Y_N \rrvert
^4 \bigr]^{1/2} h^2 +c \bE\bigl[ \llvert
\bar{Z}_N - Z_N\rrvert^2\bigr]
h^2
\\
&&{}+ c \Biggl(\sum_{i=0}^{N-1}
\tau_i(Y) \Biggr)^{1/2}h + c \Biggl( \sum
_{i=0}^{N-1} \tau_i(Z)
\Biggr)^{1/2}h.
\end{eqnarray*}
\end{lemma}

\begin{pf}
Let $i\in\{0,\ldots,N-1\}$. For ${1}/{2} \le\theta
\le1$, we have
$(1-\theta)^2
\le\theta^2$ and, therefore,
\begin{eqnarray*}
&& \bE\Biggl[ \sum_{j=i}^{N-1}
e^{c(j-i)h} H^{\o}_{j} \Biggr]
\\
&&\qquad  \le\theta^2 \bE\Biggl[ \sum_{j=i}^{N-1}
e^{c(j-i)h} \bigl( \llvert\delta f_{j+1}\rrvert^2 -
\llvert\delta\widehat{f}_j\rrvert^2 \bigr)
h^2 \Biggr]
\\
&&\qquad = \o^2 \bE\Biggl[ \sum_{j=i}^{N-1}
e^{c(j-i)h} \bigl( \llvert\delta f_{j+1}\rrvert^2 -
\llvert\delta f_j + \beta_j \rrvert^2
\bigr) h^2 \Biggr]
\\
&&\qquad  \le\o^2 \bE\Biggl[ \sum_{j=i}^{N-1}
e^{c(j-i)h} \bigl( e^{ch} \llvert\delta f_{j+1}\rrvert
^2 - \llvert\delta f_j\rrvert^2 - 2
\langle\delta f_j, \beta_i \rangle- {
\beta_j }^2 \bigr) h^2 \Biggr]
\\
&&\qquad  \le\o^2 e^{c(N-i)h} \bE\bigl[\llvert\delta f_N
\rrvert^2 \bigr] h^2 - 2 \o^2 \sum
_{j=i}^{N-1} e^{c(j-i)h} \bE\bigl[ \langle\delta
f_j, \beta_j \rangle\bigr] h^2,
\end{eqnarray*}
where $ \beta_i:= \delta\widehat{f}_j - \delta f_j
= f(t_{i},X_j,\widehat{Y}_j,\widehat{Z}_j) -
f(t_{i},X_j,Y_{t_{i}},\bar{Z}_{t_{i}})$ and we used a telescopic sum. Using
now \textup{(HY0$_{\mathrm{loc}}$)} yields
\begin{eqnarray*}
\bE\bigl[ \llvert\delta f_N\rrvert^2 \bigr] &\le& c \bE
\bigl[ 1 + \llvert Y_{t_N}\rrvert^{4(m-1)} + \llvert
Y_{N}\rrvert^{4(m-1)} \bigr]^{1/2} \bE\bigl[\llvert
Y_{t_{N}} -Y_N \rrvert^4 \bigr]^{1/2}
\\
&&{} + c \bE\bigl[ \llvert\bar{Z}_N - Z_N\rrvert
^2 \bigr]
\end{eqnarray*}
and
\begin{eqnarray*}
\bE\bigl[ \langle\delta f_i, \beta_i \rangle\bigr]
h^2 & \le&\bE\bigl[ \llvert\delta f_i\rrvert\llvert
\beta_i \rrvert\bigr] h^2
\\
& \le&\bE\bigl[ \bigl( \llvert\delta f_i\rrvert L_y
\bigl(1+ \llvert\widehat{Y}_i\rrvert^{m-1}+\llvert
Y_\ti\rrvert^{m-1}\bigr) \bigr)^2
\bigr]^{1/2} \bE\bigl[\llvert\widehat{Y}_i -
Y_\ti\rrvert^2 \bigr]^{1/2} h^2
\\
&&{} + \bE\bigl[ \bigl(L_z \llvert\delta f_i
\rrvert\bigr)^2 \bigr]^{1/2} \bE\bigl[\llvert
\widehat{Z}_i - \bar{Z}_\ti\rrvert^2
\bigr]^{1/2} h^2
\\
& \le& c \bE\bigl[ B_i^1 \bigr]^{1/2} \bE
\bigl[\llvert\widehat{Y}_i - Y_\ti\rrvert^2
\bigr]^{1/2} h + c \bE\bigl[ B_i^2
\bigr]^{1/2} \bE\bigl[\llvert\widehat{Z}_i -
\bar{Z}_\ti\rrvert^2 h \bigr]^{1/2} h,
\end{eqnarray*}
where $B_i^2:= \llvert Y_\ti\rrvert^{2m}h + \llvert
Y_i\rrvert^{2m}h + \llvert\bar{Z}_\ti\rrvert^2 h +
\llvert Z_i\rrvert^2 h$ and
\[
B^{1}_i:= h^2 + \llvert
\widehat{Y}_i\rrvert^{4m}h^2 +\llvert
Y_\ti\rrvert^{4m}h^2 + \llvert
Y_i\rrvert^{4m} h^2 +\bigl( \llvert
\bar{Z}_\ti\rrvert^2 h \bigr)^2 + \bigl(
\llvert Z_i\rrvert^2 h \bigr)^2.
\]
From Theorem~\ref{theo-existenceuniquenesscanonicalFBSDE}, Corollary
\ref{barZinLp}, Remark~\ref{rmkhat} and Proposition
\ref{thsizebound}, we have for the first term of the above inequality
\begin{eqnarray*}
\sum_{i=0}^{N-1} \bE\bigl[
B_i^1 \bigr]^{1/2}  \bE\bigl[ \llvert
\widehat{Y}_i - Y_\ti\rrvert^2
\bigr]^{1/2} h
&\le&\Biggl( \sum_{i=0}^{N-1} \bE\bigl[
B_i^1 \bigr] \Biggr)^{1/2} \Biggl(\sum
_{i=0}^{N-1} \tau_i(Y)
\Biggr)^{1/2}h
\\
&\le& c \Biggl(\sum_{i=0}^{N-1}
\tau_i(Y) \Biggr)^{1/2}h
\end{eqnarray*}
and similarly for the second term
\begin{eqnarray*}
&& \sum_{i=0}^{N-1} \bE\bigl[
B_i^2 \bigr]^{1/2} \bE\bigl[\llvert
\widehat{Z}_i - \bar{Z}_\ti\rrvert^2 h
\bigr]^{1/2} h \le c \Biggl( \sum_{i=0}^{N-1}
\tau_i(Z) \Biggr)^{1/2}h.
\end{eqnarray*}\upqed
\end{pf}
%
\subsection{Convergence of the scheme}
By collecting the above results, we can now prove Theorem
\ref{thmainresult}.
\begin{pf*}{Proof of Theorem~\ref{thmainresult}}
The proof is a combination of the Fundamental Lemmas~\ref{lemfundamental} and~\ref{lemmaerroronterminalconditons},
Proposition~\ref{propglobaldiscretizationerror} and stability results
obtained in this section, namely Proposition~\ref{thstability} and Lemma
\ref{lemcontrolsumlackofregularity}.

We move to the proof of part (ii), the case $\o=1/2$. Since in this
case $f$
depends only on $y$, a quick rerun of arguments of the Fundamental Lemma
\ref{lemfundamental}, shows there exists a constant $c>0$ such that
\[
\max_{i=0, \ldots, N} \bE\bigl[\llvert Y_\ti-
Y_i \rrvert^2\bigr] \le c \Biggl\{ \bE\bigl[ \llvert
Y_\tN- Y_N \rrvert^2\bigr] + \sum
_{i=0}^{N-1} \frac{\tau_i(Y)}{h} \Biggr\} + (1+h)
\cR^{\cS}(H).
\]
The first two terms on the RHS can be bounded by $ch^{2\gamma} + c h^4 $,
$c>0$, using Lemma~\ref{lemmaerroronterminalconditons} and
Proposition~\ref{propglobaldiscretizationerror}, respectively.
By Lemma
\ref{lemcontrolsumlackofregularity}, there exists a constant
$c>0$ such
that
\[
\cR^{\cS}(H)\le c \bE\bigl[\llvert Y_{t_{N}} -Y_N
\rrvert^4 \bigr]^{1/2} h^2 + c \Biggl(\sum
_{i=0}^{N-1} \tau_i(Y)
\Biggr)^{1/2}h,
\]
and using again Lemma~\ref{lemmaerroronterminalconditons} and Proposition
\ref{propglobaldiscretizationerror} yields $\cR^{\cS}(H)\le c
h^{2\gamma+2} + c h^{7/2} $. By joining these results, the
theorem's conclusion follows.
\end{pf*}

\subsection{Further remarks}
Here, we discuss a true overall second-order scheme, namely a second-order
discretization for $Z$ and an intuitive variance reduction
technique which we have used throughout but not made formally explicit.

\subsubsection{The candidate for second-order scheme}
For the general case where the driver depends on $Z$, the approximation for
$Z_i$,
namely (\ref{eqZ}), is not
enough to obtain a higher order scheme as it is a first-order
approximation. The proper higher order scheme in its full generality
follows by applying the trapezoidal rule to \emph{all} integrals
present in
(\ref{eqnumericsBasicZ}); as is done for (\ref{eqnumericsBasicY}). With
some
manipulation (left to the reader), we
end up with the following approximation for $Z_i$ [compare with~(\ref{eqZ})]:
\begin{eqnarray*}
Z_i &=& \frac{2}h \bE_{{i}} \bigl[ \Delta
W_{{i+1}} \bigl( Y_{t_{i+1}} + (1-\theta)f(\ti,X_\ip,Y_{i+1},Z_{i+1})h
\bigr) \bigr] - \bE_{{i}} [Z_\ip],
\end{eqnarray*}
with $\theta=1/2$, the terminal condition $Y_N=g(X_N)$, along with
(\ref{eqY}) and a
suitable approximation for $Z_T$. An approximation for $Z_T$
is not trivial and could, for instance, be found via Malliavin calculus.
The
general
treatment of such a scheme is left for future research.

Another type of second-order scheme can be found in
Crisan and\break 
Manolarakis (\citeyear{CrisanManolarakis2010b}); the approximation there is based on
It\^o--Taylor expansions.

\subsubsection{Controlling the variance of the scheme}

If we use the notation set up in Section~\ref
{subsectionwelldefinednessandlocalestimate}, the approximation
(\ref{eqZ}) can be written out as
$ Z_{i\hspace*{-0.7pt}} = \bE_i [ \Delta W_{\hspace*{-0.5pt} i+1} A_\ip]/h$.
We point out that implementation-wise it is better to use the
lower variance approximation (\ref{eqvariancereductionZ}) instead of
(\ref{eqZ}), that is, to
use
\begin{eqnarray*}
&&Z_i = \frac{1}{h}\bE_i \bigl[ \Delta
W_{i+1} \bigl( A_\ip- \bE_i[A_\ip]
\bigr) \bigr],\qquad i=0,\ldots,N-1.
\end{eqnarray*}
This does not lead to a relevant additional computation effort, as
$\bE_i[A_\ip]$ must
be computed for the estimation of the $Y_i$ component. To avoid a long
analysis, we make some simplifying assumptions in order to better
explain the
gain: assume $X_t=x+W_t$ and that we are about to compute $Z_0$ (a standard
expectation);
assume further (via Doob--Dynkin lemma) that $A_1$ can be written
as\footnote{If the reader is aware of
how conditional expectations in the BSDE framework are calculated, say,
for example,
via projection over a basis of functions, having a function $\varphi$ is
expected.}
$A_1=\varphi(X_1)=\varphi(x+\Delta W_1)$ where $\varphi$ has some
regularity so that
\[
\varphi(x+\Delta W_1) = \varphi(x) + \varphi'(x) (
\Delta W_1) + \tfrac{1}{2}\varphi''
\bigl(x^{*}\bigr) (\Delta W_1)^2, %
\]
where $x^*$ lies between $x$ and $x+\Delta W_1$. Then the Monte Carlo (MC)
estimator for
$Z_0$ from (\ref{eqZ}), with $M$ samples of the
normal $\cN(0,1)$ distribution given by
$\{\cN^\lambda\}_{\lambda=1, \ldots,M}$, and its standard deviation
(St.d.) are
\[
Z^{\mathrm{MC},\mbox{\fontsize{8.36}{8.36}\selectfont{(\ref{eqZ})}}}_0 = \frac{1}{M }\sum
_{\lambda=1}^M \frac{\sqrt{h}\cN^\lambda}{h} \varphi\bigl( x+
\sqrt{h}\cN^\lambda\bigr) \qquad\mbox{with } \Std\approx
\frac{\llvert \varphi(x)\rrvert }{\sqrt{h}\sqrt{M}}.
\]
Using (\ref{eqvariancereductionZ}) instead of (\ref{eqZ}) to
compute $Z_0$
would
produce the MC estimator and its St.d.
\[
Z^{\mathrm{MC},\mbox{\fontsize{8.36}{8.36}\selectfont{(\ref{eqvariancereductionZ})}}}_0 = \frac{1}{M }\sum
_{\lambda=1}^M \frac{\sqrt{h}\cN^\lambda}{h} \bigl( \varphi\bigl(
x+\sqrt{h}\cN^\lambda\bigr) - \varphi(x) \bigr) \qquad\mbox{with } \Std
\approx\frac{ \llvert \varphi'(x)\rrvert }{\sqrt{M}}.
\]
Compare now the standard deviation of both estimators. It is crucial
for the
stability
that the denominator of the variance of
$Z^{\mathrm{MC},\mbox{\fontsize{8.36}{8.36}\selectfont{(\ref{eqvariancereductionZ})}}}_0$
lacks that $\sqrt{h}$ term.
If $M$ is
kept fixed
then as $h$ gets smaller we expect $Z^{\mathrm{MC},\mbox{\fontsize{8.36}{8.36}\selectfont{(\ref{eqZ})}}}_0$
to blow up
while
$Z^{\mathrm{MC},\mbox{\fontsize{8.36}{8.36}\selectfont{(\ref{eqvariancereductionZ})}}}_0$ will remain controlled
(assuming $\varphi$
can be controlled\footnote{In \citet{GobetTurkedjiev2011}, it is shown
for the
locally Lipschitz driver case that $\varphi$ is indeed a Lipschitz
function of its variables.}). This can be numerically confirmed in
\citet{AlankoAvellaneda2013}.

We point out that this simple trick can be adapted to the scheme
proposed in
the next section as well as to the computation of the second-order scheme
proposed previously.

%
\section{Convergence of the tamed explicit scheme}\label{sectionexplicitscheme}

We now turn our attention back to the explicit scheme.
Unlike the case $\theta\in[1/2,1]$, when $\theta< 1/2$, the
local estimates of Proposition~\ref{thwellposed} cannot be extended
to the
global ones (as in Proposition~\ref{thsizebound}). Consequently, we
also do
not have a control over the stability remainder $\cR^{\cS}(H)$ (see Definition
\ref{defstab}).
In fact, as the motivating example of the \hyperref[sec1]{Introduction} shows, the
scheme can
explode.
To remedy to this, we consider the tamed explicit scheme, described in
(\ref{eqY-tamedexplicit})--(\ref{eqZ-tamedexplicit}),
which in turn corresponds to a truncation procedure applied to the original
BSDE,
and show that this scheme converges.
Our analysis yields as a by-product sufficient conditions under which the
naive explicit scheme converges (see Remark
\ref{remarkwhentheexplicitschemeconverges}).

%
\begin{remark}[($m>1$)]
In this section, we focus exclusively on the case $m>1$ in assumption~\textup{(HY0)}.
The easier case $m=1$ does
not require taming and stability of the scheme results from a straightforward
adaptation of the proof of Proposition~\ref{propstabilityfortheexplicitscheme}.
\end{remark}

\subsection{Principle}

The idea is that with the truncation functions $T_{L_h}$ and $T_{K_h}$ [recall
the scheme
(\ref{eqY-tamedexplicit})--(\ref{eqZ-tamedexplicit})], one cannot
only obtain uniform integrability bounds for the scheme, but also a
pathwise
bound, ensuring that the output
$\{Y_i\}_{i=0, \ldots, N}$ stays under a certain threshold,
under which the scheme is found to be stable in the sense of
(\ref{eqdef-of-stability-sec4}) with $H_i=0$.

Note that this tamed scheme is not exactly the scheme
(\ref{eqY})--(\ref{eqZ}) with $\theta=0$. However, it can be seen
as the case $\theta=0$ with the functions $T_{L_h}\circ g$ and
$f(\cdot,T_{K_h}(\cdot),\cdot,\cdot)$ instead of $g$ and $f$.
They satisfy the same properties with the same constants,
so we can reuse the results of Section~\ref{sectionnumericaldiscretization}.

Because the scheme is controlled, we naturally compare first its output
$\{(Y_i,Z_i)\}_{i \in\{0,\ldots,N\} }$ to $(Y_\ti', \bar{Z}_\ti
')_{\ti\in
\pi}$, where
$(Y_t',Z_t')_{t\in[0,T]}$ is the solution to the BSDE
(\ref{canonicFBSDE}) with controlled coefficients, for $t\in[0,T]$
%
\begin{equation}
\label{eqsec6BSDEprime} Y'_t = T_{L_h}
\bigl(g(X_T) \bigr) + \int_t^T f
\bigl(u,T_{K_h}(X_u),Y'_u,Z'_u
\bigr) \,\udu- \int_t^T Z'_u
\,\ud W_u.
\end{equation}
This part of analysis follows the methodology used above.

In a second step, it is enough to estimate the distance between the
solution $(Y',Z')$ of the truncated BSDE (\ref{eqsec6BSDEprime}) and the
solution $(Y,Z)$ of the original BSDE~(\ref{canonicFBSDE}) in order to
conclude
to the convergence of the scheme.

In line with Sections~\ref{sectionnumericaldiscretization} and
\ref{sectionthetaimplicitschemes}, we define $\{\bar
Z_\ti'\}_{\ti\in\pi}$ as in (\ref{Z-bar-ti-pi}),
$\widehat{Y}_i=Y_{i,(Y'_\ip,\bar{Z}_\ip')}$ and
$\widehat{Z}_i=Z_{i,(Y'_\ip,\bar{Z}_\ip')}$ for $i=0, \ldots,
N-1$, more
precisely
%
\begin{eqnarray}
\label{eqdefinitionofhatYtamed} \widehat{Y}_i &:=& \bE_{{i}} \bigl[
Y'_{\tip} + f \bigl( t_\ip,T_{K_h}(X_\ip),Y'_{\tip},
\bar{Z}_{\tip}' \bigr) h \bigr],
\\
\label{eqdefinitionofhatZtamed} \widehat{Z}_i &:=& \bE_{{i}} \biggl[
\frac{\Delta W_{{i+1}} }{h} \bigl( Y'_{t_{i+1}} + f_h
\bigl( t_\ip,X_{\ip}, Y'_{\tip},
\bar{Z}_{\tip}' \bigr) h \bigr) \biggr].
\end{eqnarray}

\subsection{Integrability for the scheme}

We now show that the tamed Euler scheme has the property that
$\llvert Y_i\rrvert \le h^{-1/(2m-2)}
$ for all $i \in\{0,\ldots,N\}$. This is already true
for $Y_N =T_{L_h} (g(X_N) )$ by construction. In the next two
propositions, we will show that this bound propagates through time.

%
\begin{proposition} \label{propexplicitone-stepsizeestimate}
Assume \textup{(HX0)}, \textup{(HY0)} and that $h \le{1}/({32d L_z^2})$.
If for a given $i \in\{ 0, \ldots,N-1 \}$ one has
$\llvert Y_\ip\rrvert \le h^{-1/(2m-2)}$, then one also has
\begin{eqnarray*}
\llvert Y_i\rrvert^2 + \frac{1}{d} \llvert
Z_{i}\rrvert^2h
&\le& ( 1+ c_1 h ) \bE_i \biggl[ \llvert
Y_{i+1} \rrvert^2 + \frac{1}{4d}\llvert
Z_{i+1}\rrvert^2 h \biggr]
\\
&&{} + c_2 h +
c_2 h \bE_i \bigl[ \bigl\llvert T_{K_h}(X_\ip)
\bigr\rrvert^2 \bigr].
\end{eqnarray*}
\end{proposition}

\begin{pf}
Take $i \in\{ 0, \ldots,N-1 \}$. We have seen in the proof of Proposition~\ref{thwellposed}, equation
(\ref{eqwellposep2}) that,
since $\theta= 0$,
\begin{eqnarray*}
\llvert Y_i\rrvert^2 + \frac{1}{d} \llvert
Z_{i}\rrvert^2 h & \le&\bigl( 1+2 \bigl(L_{y}+
\alpha'\bigr) h \bigr) \bE_i \bigl[ \llvert
Y_{i+1} \rrvert^2 \bigr]
\\
&&{}+ \frac{3 L_{z}^2}{2\alpha'} \bE_i \bigl[ \llvert Z_{i+1}
\rrvert^2 \bigr] h + \bE_i\bigl[B\bigl(i+1,\alpha
'\bigr)\bigr] + H^0_i,
\end{eqnarray*}
where $B(i+1,\alpha'): =
( {3 L^2} h + {3 L_x^2} \llvert T_{K_h}(X_\ip)\rrvert^2
h)/{2 \alpha'}$
and
\[
H^{0}_i = \bE_i \bigl[ \llvert
f_\ip\rrvert^2 \bigr] h^2 =
\bE_i \bigl[ \bigl\llvert f\bigl(\tip,T_{K_N}(X_\ip),Y_\ip,Z_\ip
\bigr)\bigr\rrvert^2 \bigr] h^2.
\]
Using \textup{(HY0)} and the fact that $\llvert Y_\ip\rrvert
^{2(m-1)}h \le1$, we have
\begin{eqnarray*}
\llvert f_\ip\rrvert^2 h^2 & \le&4
L^2 h^2 + 4 L_x^2 \bigl\llvert
T_{K_h}(X_\ip)\bigr\rrvert^2 h^2
\\
&&{}+ 4 L_y^2 \bigl[\llvert Y_\ip
\rrvert^{2(m-1)}h \bigr] \llvert Y_\ip\rrvert^2 h
+ 4 L_z^2 \llvert Z_\ip\rrvert
^2 h^2
\\
& \le&4 L^2 h^2 + 4 L_x^2 \bigl
\llvert T_{K_h}(X_\ip)\bigr\rrvert^2
h^2 + 4 L_y^2 \llvert Y_\ip
\rrvert^2 h + 4L_z^2h \llvert
Z_\ip\rrvert^2 h,
\end{eqnarray*}
so we have in the end
\begin{eqnarray*}
\llvert Y_i\rrvert^2 + \frac{1}{d} \llvert
Z_{i}\rrvert^2 h & \le&\bigl( 1+2 \bigl(L_{y}+
\alpha' + 2L_y^2 \bigr) h \bigr)
\bE_i \bigl[ \llvert Y_{i+1} \rrvert^2 \bigr]
\\
&&{}+ \biggl( \frac{3 L_{z}^2}{2\alpha'} + 4L_z^2h \biggr)
\bE_i \bigl[ \llvert Z_{i+1}\rrvert^2 \bigr]
h
\\
&&{}+ \biggl( \frac{3 L^2}{2 \alpha' } + 4L^2 h \biggr) h + \biggl(
\frac{3 L_x^2}{2 \alpha'} + 4 L_x^2 h \biggr) \bE_i
\bigl[\bigl\llvert T_{K_h}(X_\ip)\bigr\rrvert
^2 \bigr] h.
\end{eqnarray*}
Choose now $\alpha' =12 d L_z^2$ [so that $3L_{z}^2/(2\alpha')
\le
{1}/({8d})$] and combine with the restriction $h\leq1/(32 d L_z^2)$
(so that $4L_z^2h \le\frac{1}{8d}$).
Taking $c_1 = 2 (L_{y}+ 12 d L_z^2 + 2L_y^2 )$ and
\[
c_2 = \max\biggl\{\frac{3 L^2}{24 d L_z^2 } + \frac{4L^2}{32d
L_z^2},
\frac{3 L_x^2}{24 d L_z^2 } + \frac{4 L_x^2
}{32d L_z^2} \biggr\} = \max\biggl\{\frac{L^2}{4 d L_z^2 }, \frac{L_x^2}{4 d L_z^2 } \biggr\},
\]
and noting that ${1}/({4d}) \le(1 + c_1 h)/(4d)$,
we find the required estimate
\begin{eqnarray*}
\llvert Y_i\rrvert^2 + \frac{1}{d} \llvert
Z_{i}\rrvert^2 h
&\le& ( 1+c_1 h )
\bE_i \biggl[ \llvert Y_{i+1} \rrvert^2
+ \frac{1}{4d} \llvert Z_{i+1}\rrvert^2 h
\biggr]
\\
&&{} + c_2 h + c_2 h \bE_i \bigl[ \bigl
\llvert T_{K_h}(X_\ip)\bigr\rrvert^2 \bigr].
\end{eqnarray*}\upqed
\end{pf}

We can then use this local bound to obtain the following pathwise bound.

%
\begin{proposition} \label{propositionsizeestimateexplicitscheme}
Let \textup{(HX0)} and \textup{(HY0)} hold. For any $i\in\{0,
\ldots,N-1\}$,
\begin{eqnarray*}
&& \llvert Y_i\rrvert^2 + \frac{1}{4d} \llvert
Z_i\rrvert^2 h + \frac{3}{4d} \bE_i
\Biggl[ \sum_{j=i}^{N-1} \llvert
Z_{j}\rrvert^2 h \Biggr]
\\
&& \qquad\le e^{c_1 (N-i)h} \bE_i \bigl[ \llvert Y_N
\rrvert^2 \bigr] + e^{c_1 (N-1-i)h} \Biggl( \sum
_{j=i}^{N-1} c_2 h + c_2 h
\bE_i \bigl[\bigl\llvert T_{K_h}(X_\ip)\bigr
\rrvert^2 \bigr] \Biggr).
\end{eqnarray*}
This implies in particular that $\llvert Y_i\rrvert \le
h^{-1/(2m-2)}
$.
\end{proposition}

\begin{pf}
The proof goes by induction. The case $i=N$ is clear. If the estimate
is true
for $i+1$, noting that
$\llvert Y_N\rrvert \le L_h$, $\llvert T_{K_h}(x)\rrvert \le K_h$ and
$e^{c_1 T} ( L_h^2 + c_2 T + c_2 T K_h^2 )
\le
h^{-1/(m-1)}
$,
we see that $\llvert Y_\ip\rrvert^2 \le h^{-1/(m-1)}
$.
Then, combining the estimate of Proposition
\ref{propexplicitone-stepsizeestimate}
and the estimate for $i+1$ (from the induction assumption),
in the same way as in Lemma~\ref{lemGron}, we obtain the desired
estimate for
$i$.
\end{pf}

In view of the previous bound, we can derive a similar estimate for the
solution $(Y',Z')$ to (\ref{eqsec6BSDEprime}). Namely, using
(\ref{eqpathwise-estim-for-polyy-BSDE}) with $\alpha=
12dL_z^2$ and combining it further with \textup{(HY0)}, we have
\begin{eqnarray*}
\bigl\llvert Y'_t\bigr\rrvert^2 & \le&
e^{2(L_y + 12dL_z^2) (T-t)}
\\
&&{}\times \bE_t \biggl[ \bigl\llvert T_{L_h}
\bigl(g(X_T)\bigr) \bigr\rrvert^2 + \int
_t^T \frac{1}{16d L_z^2} \bigl\llvert f
\bigl(u,T_{K_h}(X_u),0,0\bigr)\bigr\rrvert^2\,
\udu\biggr]
\\
& \le& e^{c_1 (T-t)} \bE_t \biggl[ \bigl\llvert T_{L_h}
\bigl(g(X_T)\bigr) \bigr\rrvert^2 + \int
_t^T \frac{1}{8dL_z^2} \bigl( L^2
+ L_x^2 \bigl\llvert T_{K_h}(X_u)
\bigr\rrvert^2 \bigr) \,\udu\biggr]
\\
& \le& e^{c_1 T} \bigl(L_h^2 + c_2T
+c_2 T K_h^2 \bigr)
\\
& \le&\biggl(
\frac{1}{h} \biggr)^{1/(m-1)},
\end{eqnarray*}
implying in particular that
$\llvert Y'_\ti\rrvert
\le
h^{-1/(2m-2)}
$
for all $i$.

These two estimates, ensuring that both $Y_i$ and $Y'_\ti$ are
bounded by $h^{-1/(2m-2)}$
will be useful in the analysis of
the global error, since the explicit scheme is found to be stable
under this threshold.

\subsection{Stability of the scheme}

As previously, for any $i\in\{0,\ldots,N-1\}$ we use the notation
$\delta Y_\ip:= Y'_\tip- Y_\ip$ and
$\delta Z_\ip:= \bar{Z}_\tip' - Z_\ip$, as well\vspace*{1pt} as $\delta A_\ip:= \delta
Y_\ip+ \delta f_\ip h$ where $\delta f_\ip$ is given by
\begin{eqnarray*}
\delta f_\ip&:=& f \bigl(\tip, T_{K_h}(X_\ip),Y_\tip',
\bar{Z}_\ip' \bigr) - f \bigl(\tip, T_{K_h}(X_\ip),Y_\ip,Z_\ip
\bigr).
\end{eqnarray*}
Then, denoting $\widehat{\delta Y}_i = \widehat{Y}_i -Y_i$ and
$\widehat{\delta Z}_i = \widehat{Z}_i -Z_i$,
we can write
\begin{eqnarray*}
&&\widehat{\delta Y}_i = \bE_i [ \delta
A_\ip] \quad\mbox{and}\quad\widehat{\delta Z}_i =
\bE_i \biggl[\frac{1}h {\Delta W_\ip} \delta
A_\ip\biggr].
\end{eqnarray*}
We now proceed to show that, because the two inputs satisfy $\llvert
Y_\ip\rrvert,
\llvert Y'_\tip\rrvert \le h^{-1/(2m-2)}$,
the scheme is
stable in the sense that we can obtain the estimate
(\ref{eqdef-of-stability-sec4}) with $H_i = 0$.

%
\begin{proposition} \label{propstabilityfortheexplicitscheme}
Assume \textup{(HX0)} and \textup{(HY0$_{\mathrm{loc}}$)}. Then there exists a
constant $c$ for any $h \le\min\{1,1/32d L_z^2 \}$, such that for $i\in\{0,\ldots,N-1\}$
\begin{eqnarray}
&&\llvert\widehat{\delta Y}_i\rrvert^2 +
\frac{1}{d} \llvert\widehat{\delta Z}_i\rrvert
^2h \le( 1+ c h ) \bE_i \biggl[ \llvert\delta
Y_{i+1} \rrvert^2 + \frac{1}{4d}\llvert\delta
Z_{i+1}\rrvert^2 h \biggr].
\end{eqnarray}
\end{proposition}

\begin{pf}
Let $i\in\{0,\ldots,N-1\}$. Just like for Proposition~\ref{thstability},
the
proof mimics the computations of the proof of Proposition
\ref{thwellposed} with only a small adjustment for the constants.
However, a
different argumentation for the term $H^0_i = \llvert\delta f_\ip
\rrvert^2 h^2$ is
required. Using (HY0$_{\mathrm{loc}}$), $h \le1$ and the bounds
$\llvert Y'_\tip\rrvert^{2(m-1)} h$,
$\llvert Y'_\tip\rrvert^{2(m-1)} h
\le1$, we have
\begin{eqnarray*}
\llvert\delta f_\ip\rrvert^2 h^2 & \le&2L_y^2 \bigl( 1+\bigl\llvert Y'_\tip
\bigr\rrvert^{2(m-1)} + \llvert Y_\ip\rrvert^{2(m-1)}
\bigr) \bigl\llvert Y'_\tip- Y_\ip\bigr
\rrvert^2 h^2
\\
&&{}+ 2L_z^2 \bigl\llvert\bar{Z}_\tip'
- Z_\ip\bigr\rrvert^2 h^2
\\
&=& 2L_y^2 \bigl( h +\bigl\llvert Y'_\tip
\bigr\rrvert^{2(m-1)}h + \llvert Y_\ip\rrvert
^{2(m-1)}h \bigr) h \bigl\llvert Y'_\tip-
Y_\ip\bigr\rrvert^2
\\
&&{}+ 2L_z^2 h \bigl\llvert\bar{Z}_\tip'
- Z_\ip\bigr\rrvert^2 h
\\
& \le&6L_y^2 h \llvert\delta Y_\ip\rrvert
^2 + 2L_z^2 h \llvert\delta
Z_\ip\rrvert^2 h.
\end{eqnarray*}
The rest follows as in the proof of Proposition~\ref{thwellposed}.
\end{pf}

\subsection{Convergence of the scheme}

The convergence of the scheme is\break achieved by controlling both the (squared)
error committed
by the truncation procedure, $\llVert Y-Y'\rrVert_{\cS
^2}^2+\llVert Z-Z'\rrVert_{\cH^2}^2$,
as a
function of the time step, and by controlling the numerical approximation
(\ref{eqY-tamedexplicit})--(\ref{eqZ-tamedexplicit}) of
the solution $(Y',Z')$ to (\ref{eqsec6BSDEprime}).

\subsubsection*{Distance between $(Y_i,Z_i)_i$ and $(Y'_{t_{i}},\bar{Z}_{t_{i}}')_i$}

We estimate this distance by using the Fundamental Lemma
\ref{lemfundamental}.

The tamed scheme (\ref{eqY-tamedexplicit})--(\ref
{eqZ-tamedexplicit}) is
the $\theta=0$
scheme (\ref{eqY})--(\ref{eqZ}) with the coefficient
$f(\cdot,\cdot,T_{K_h}(\cdot),\cdot)$
and terminal condition $T_{L_h}\circ g$ having the same Lipschitz
constant as
$f$ and $g$.
So the results of Section~\ref{sectionnumericaldiscretization} apply.
In particular, Lemma~\ref{lemmaerroronterminalconditons} controls
the error on the terminal condition.

Similarly, Lemmas~\ref{lemerrorR}~and~\ref{theocontrollingIntR} are
still valid with the same constants. The only difference is that the
path-regularity involved is now that of $(Y',Z')$,
but since $T_{L_h}\circ g$ is still Lipschitz, Theorem
\ref{thregularity} indeed applies to $(Y',Z')$.
So Proposition~\ref{propglobaldiscretizationerror} applies,
to control the sum of the one-step discretization errors.

Finally, we have just proven with Proposition~\ref
{propstabilityfortheexplicitscheme}
that the scheme is stable with $H_i^0 = 0$, so $\cR^\cS(H)=0$.
We can therefore conclude via
Lemma~\ref{lemfundamental} that
%
\begin{eqnarray}
\label{eqestimateforconvergenceexplicitscheme1} && \max_{i=0,
\ldots, N} \bE\bigl[\bigl\llvert
Y_\ti' - Y_i \bigr\rrvert^2
\bigr] + \sum_{i=0}^{N-1} \bE\bigl[\bigl
\llvert\bar{Z}_\ti' - Z_i \bigr\rrvert
^2\bigr] h\nonumber
\\
&&\qquad  \le c \bigl(\bE\bigl[\bigl\llvert Y_\tN' -
Y_N \bigr\rrvert^2\bigr] + \bE\bigl[\bigl\llvert
\bar{Z}_\tN' - Z_N\bigr\rrvert
^2\bigr]h \bigr)
\nonumber\\[-8pt]\\[-8pt]\nonumber
&&\quad\qquad{}  + c \sum_{i=0}^{N-1}
\biggl( \frac{1}{h} \tau_i(Y) + \tau_i(Z)
\biggr) +0
\\
\nonumber
&&\qquad  \le ch.
\end{eqnarray}
We remark that the thresholds $L_h$ and $K_h$ have no effect in this
estimation.

\subsubsection*{The distance between $(Y_{t_{i}}',\bar{Z}_{t_{i}}')_i$ and $(Y_{t_{i}},\bar{Z}_{t_{i}})_i$}

We now estimate the distance between
$(Y'_\ti,\bar{Z}_\ti')_i$ and $(Y_\ti,\bar{Z}_\ti)_i$, that is, between
(\ref{eqsec6BSDEprime}) and (\ref{canonicFBSDE}), which gathers all the
error induced by the taming. In order to estimate this
error, we need to have an estimation of the $L^2$-distance between
$X_u$ and
$T_{K_h}(X_u)$ on the one hand, and $g(X_T)$ and $T_{L_h}
(g(X_T) )$ on
the other.
We give a general estimation for this below.

\begin{proposition} \label{propestimationofeffectoftamingaRV}
Let $\xi$ be a random variable in $L^q$ for some $q > 2$, and $L>0$.
Then we have
\[
\bE\bigl[ \bigl\llvert\xi-T_{L}(\xi)\bigr\rrvert^2
\bigr] \le4 \bE\bigl[ \llvert\xi\rrvert^q \bigr] \biggl(
\frac{1}{L} \biggr)^{q-2}.
\]
\end{proposition}

\begin{pf}
Using the facts that $T_L(x)=x$ for $\llvert x\rrvert\le L$
and that
$\llvert T_L(\xi)\rrvert\le\llvert\xi\rrvert$,
together with the H\"{o}lder and the Markov
inequalities,
we have
\begin{eqnarray*}
\bE\bigl[\bigl\llvert\xi-T_{L}(\xi)\bigr\rrvert^2
\bigr] &=& \bE\bigl[\bigl\llvert\xi-T_{L}(\xi)\bigr\rrvert
^2 \1_{\{ \llvert
\xi\rrvert\ge L \}}\bigr]
\\
&\le& 4 \bE\bigl[\llvert\xi\rrvert
^2 \1_{\{\llvert\xi\rrvert \ge L\}}\bigr]
\\
& \le&4 \bE\bigl[\llvert\xi\rrvert^q\bigr]^{2/q} \bP
\bigl[\llvert\xi\rrvert\ge L\bigr]^{1-2/q}
\\
&\le& 4 \bE\bigl
[\llvert\xi
\rrvert^q \bigr]^{2/q} \biggl( \frac{\bE[\llvert\xi\rrvert^q]}{L^q}
\biggr)^{1-2/q}
\\
&=& 4 \bE\bigl[\llvert\xi\rrvert^q \bigr] \biggl(
\frac{1}{L} \biggr)^{q(1-2/q)}.
\end{eqnarray*}\upqed
\end{pf}
Now, via Jensen's inequality we have
\begin{eqnarray*}
\bigl\llvert\bar{Z}_\ti- \bar{Z}_\ti'
\bigr\rrvert^2 h &=& \biggl\llvert\frac{1}{h}
\bE_i \biggl[ \int_\ti^\tip
Z_u \,\udu\biggr] - \frac{1}{h}\bE_i \biggl[ \int
_\ti^\tip Z'_u \,\udu
\biggr] \biggr\rrvert^2 h
\\
& \le&\bE_i \biggl[ \int_\ti^\tip
\bigl\llvert Z_u - Z'_u\bigr\rrvert
^2 \,\udu\biggr],
\end{eqnarray*}
from which it clearly follows that
\begin{eqnarray*}
&& \max_{i=0, \ldots, N} \bE\bigl[\bigl\llvert Y_\ti-
Y'_\ti\bigr\rrvert^2\bigr] + \sum
_{i=0}^{N-1} \bE\bigl[\bigl\llvert
\bar{Z}_\ti- \bar{Z}_\ti' \bigr\rrvert
^2\bigr] h
\\
&&\qquad  \le\sup_{t \in[0,T]} \bE\bigl[ \bigl\llvert
Y_t- Y'_t \bigr\rrvert^2
\bigr] + \bE\biggl[ \int_0^T \bigl\llvert
Z_u - Z'_u\bigr\rrvert^2\,
\udu\biggr].
\end{eqnarray*}
From the a priori estimate (\ref{eqapriori-for-diff-of-polyy-BSDE}),
we have
\begin{eqnarray*}
&&\sup_{t \in[0,T]} \bE\bigl[\bigl\llvert Y_t-
Y'_t \bigr\rrvert^2\bigr] + \bE\biggl[
\int_0^T \bigl\llvert Z_u -
Z'_u\bigr\rrvert^2\,\udu\biggr]
\\
&&\qquad \le c \biggl( \bE\bigl[\bigl\llvert g(X_T) - T_{L_h}
\bigl(g(X_T)\bigr) \bigr\rrvert^2\bigr]
\\
&&\hspace*{9pt}\quad\qquad{} + \bE\biggl[\int_0^T \bigl
\llvert f \bigl(u,X_u,Y'_u,Z'_u
\bigr) - f \bigl(u,T_{K_h}(X_u),Y'_u,Z'_u
\bigr) \bigr\rrvert^2 \,\udu\biggr] \biggr)
\\
&&\qquad \le c \biggl( \bE\bigl[\bigl\llvert g(X_T) - T_{L_h}
\bigl(g(X_T)\bigr) \bigr\rrvert^2\bigr] +
L_x^2 \int_0^T \bE
\bigl[ \bigl\llvert X_u -T_{K_h}(X_u) \bigr
\rrvert^2 \bigr] \,\udu\biggr)
\\
&&\qquad \le c \biggl( 4 \biggl( \frac{1}{L_h} \biggr)^{2m-2} \bE\bigl[
\bigl\llvert g(X_T)\bigr\rrvert^{2m} \bigr] + \biggl(
\frac{1 }{K_h} \biggr)^{2m-2}4 L_x^2 \int
_0^T \bE\bigl[ \llvert X_u\rrvert
^{2m} \bigr] \,\udu\biggr),
\end{eqnarray*}
thanks to Proposition~\ref{propestimationofeffectoftamingaRV}. Now,
since $X \in\S^{2m}$ (Theorem~\ref{theo-existenceuniquenesscanonicalFBSDE}),
$g$ is of linear growth, and $L_h$ and $K_h$ are of order
$h^{-1/(2m-2)}$,
we can conclude that
%
\begin{eqnarray}
\label{eqestimateforconvergenceexplicitscheme2}&& \max_{i=0, \ldots, N} \bE\bigl[\bigl\llvert
Y_\ti- Y'_\ti\bigr\rrvert^2
\bigr] + \sum_{i=0}^{N-1} \bE\bigl[\bigl
\llvert\bar{Z}_\ti- \bar{Z}_\ti' \bigr
\rrvert^2\bigr] h \le c h.
\end{eqnarray}

\subsubsection*{The proof of the Theorem \texorpdfstring{\protect\ref{thconvergencerateforexplicitscheme}}{}}

By collecting the above results, we can now prove Theorem
\ref{thconvergencerateforexplicitscheme}.

\begin{pf*}{Proof of Theorem~\ref{thconvergencerateforexplicitscheme}}
To prove this theorem, that is, that $\mathrm{ERR}_\pi(Y,\break Z) \le
ch^{1/2}$ [see
(\ref{eqerrornorm-sec4})], we use the triangular inequality and dominate\break 
$\mathrm{ERR}_\pi(Y,Z)$ by the sum of:
(i) the distance between the solution $(Y,Z)$ to the
original BSDE (\ref{canonicFBSDE}) and
the solution $(Y',Z')$ to the truncated BSDE (\ref{eqsec6BSDEprime}),
and (ii)
the distance between $(Y_\ti',\bar{Z}_\ti')_{\ti\in\pi}$ and
the $\{(Y_i,Z_i)\}_{i \in\{0,\ldots,N\} }$
[from the scheme (\ref{eqY-tamedexplicit})--(\ref{eqZ-tamedexplicit})].
The estimate for the first difference is given by
(\ref{eqestimateforconvergenceexplicitscheme2}).
The estimate for the second is given by
(\ref{eqestimateforconvergenceexplicitscheme1}).
Hence, the result.
\end{pf*}

%
\begin{remark} \label{remarkwhentheexplicitschemeconverges}
We see from the proofs of Propositions
\ref{propexplicitone-stepsizeestimate} and
\ref{propositionsizeestimateexplicitscheme}
that if
$x \mapsto f(t,x,y,z)$ is bounded (say, by $K$)
uniformly in the other variables
and the terminal condition $g$ is bounded,
then the naive explicit scheme
[i.e., (\ref{eqY})--(\ref{eqZ}) with $\theta=0$] converges.
Under these conditions, it is suitable to use the explicit backward Euler
scheme.
\end{remark}
%
%
\section{Numerical experiments}\label{sectionnumerics}


We conclude with some numerical experiments for the convergence of
the introduced schemes. In this work, we are concerned only with the
time-discretization, but
in order to implement a scheme, we need to further
approximate the required conditional expectations.
For this, we use the method of regression on a basis
functions as in
\citet{GobetLemorWarin}, \citet{GobetTurkedjiev2011}.
Following \citet{GobetLemorWarin}, we work with (Hermite) polynomials
up to a
certain degree $K$.
Here, we do not aim at studying the effect of the number
$K$ of basis functions or the number $M$ of diffusion paths $\{X^m_i\}_{i=0, \ldots, N}^{m=1, \ldots, M}$.
Rather, we choose $K$ and $M$ big enough so that
(a) the variance of the results is small enough, and
(b) the effect of approximating the conditional expectation is
negligible and so
what we measure is indeed the effect on the time-discretization of the
time-step $h=T/N$.

In all the examples below, we fix terminal time $T$ and want to compute
an approximation of $u(t,X_t)=Y_{t}^{}=:Y_{t}^{\mathrm{true}}$.
Since in this section we use grids with different numbers $N$ of intervals,
we do not omit the superscripts and denote by $Y^{N}_i$
the scheme's approximation of $Y_{t_i}^{\mathrm{true}}$.
When the explicit solution to the FBSDE is known, we can
measure the error of the numerical approximation by estimating
$\mathrm{ERR}(Y^N)=\max_i \bE[ \llvert Y_{t_i}^{\mathrm{true}} -
Y^{N}_i\rrvert^2
]^{1/2}$.
When the explicit solution is not known, we can compute
%
\begin{eqnarray}
\label{eqerrorN} &&e(N):=\max_{i=0,\ldots,N}\bE\bigl[ \bigl\llvert
Y^{N}_i - Y^{2N}_{2i} \bigr\rrvert
^2 \bigr]^{1/2}.
\end{eqnarray}
By observing the convergence of
$e(N)$ we can measure the convergence rate of the scheme even when we
do not
know the true solution. Indeed,
assume that for constants $c$ and $\gamma$, for any $N$ and any
$i=0,\ldots,N$
we have
\begin{eqnarray*}
&&\bE\bigl[ \bigl\llvert Y^{N}_i - Y^{2N}_{2i}
\bigr\rrvert^2 \bigr]^{1/2} \le c \mathit{N}^{-\gamma},
\\
&&\qquad\Longrightarrow\quad \bE\bigl[ \bigl\llvert Y^{N}_i - Y^{\mathrm{true}}_{t_i}\bigr\rrvert^2 \bigr]^{1/2}
\le\sum_{k=0}^{\infty}c\bigl(2^{k}
\mathit{N}\bigr)^{-\gamma} =\frac{c
\mathit{N}^{-\gamma}}{1-(1/2)^{\gamma}} =c'
\mathit{N}^{-\gamma},
\end{eqnarray*}
given that the scheme converges.

We computed the approximation processes
$(Y^{N}_i)$ and $(Y^{2N}_i)$ using
the same sample of Brownian increments.
For each measurement, we launched the scheme 10 times and averaged the
results.


\subsection*{Example 1---Numerical approximation for Example \texorpdfstring{\protect\ref{exampleFH-N}}{}}

We consider the motivating FitzHugh--Nagumo PDE and the terminal
condition $g$
of
Example~\ref{exampleFH-N} with $a=-1$, for which $f(t,x,y,z)=-y^3+y$: a
cubic polynomial (without quadratic terms). To solve the implicit
equation [see (\ref{eqY})], we can use Cardano's formula to compute
the single real
root
of the polynomial equation.

We take $T=1$ and $x_0=3/2$. The solution to the PDE is given by
(\ref{exampleSOLtoPDE}). We compute the error for various values of
$N$, and
this for the explicit scheme
($\theta=0$, which converges in that case since $g$ is bounded---see Remark~\ref{remarkwhentheexplicitschemeconverges}),
the implicit scheme ($\theta=1$) and the trapezoidal scheme [$\theta=1/2$,
note that we are under the extra assumptions made in Theorem
\ref{thmainresult}(ii)].

%
\begin{figure}[b]

\includegraphics{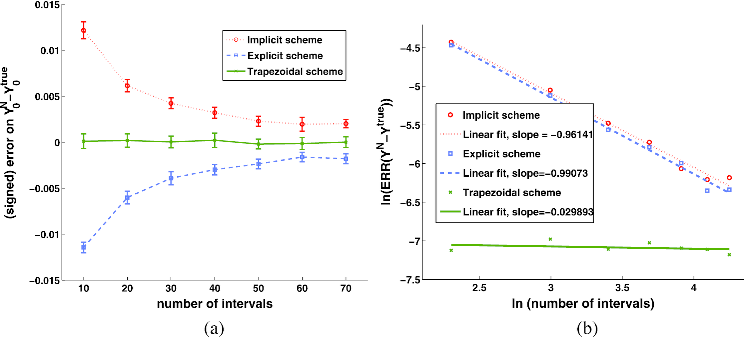}

\caption{\textup{(a)} Differences $Y^{N}_0-Y^{\mathrm{true}}_0$ for
each scheme
as functions on the number $N$ of time intervals.
\textup{(b)} Convergence rates obtained via linear fits on the log--log plots of
$\mathrm{ERR}(Y^N)$.
We used $N\in\{10,20,30,40,50,60,70\}$,
Hermite polynomials up to degree $K=7$,
$M=2\times10^5$ and $10$ simulations for each point.}
\label{figBM-error+conv-rates}\label{figBM-error+conv-rates-suba}\label{figBM-error+conv-rates-subb}
\end{figure}

In Figure~\ref{figBM-error+conv-rates-suba}(a), we see that the
implicit scheme overshoots the true solution while the explicit one
undershoots it; the trapezoidal scheme performs better in any
grid.
The convergence rates, as measured using $\mathrm{ERR}(Y^N)$, are presented in
Figure~\ref{figBM-error+conv-rates-subb}(b). For the trapezoidal
scheme, the error for
any $N$ is very small and the variance of the results is not negligible,
hence we are not able to measure the convergence rate as accurately.
The experimental rate seems to be lower than that of the explicit and implicit; see Table~\ref{rates-implicit-explicit-trap}.
We note, however, that the error is already much lower than those in
the other
schemes.

%
\begin{table}
\tabcolsep=0pt
\tablewidth=258pt
\caption{Estimated rates (value of slope) for the
experiment reported in Figure~\protect\ref{figBM-error+conv-rates}}\label{rates-implicit-explicit-trap}
\begin{tabular*}{\tablewidth}{@{\extracolsep{\fill}}@{}lcc@{}}
\hline
\textbf{Scheme} & \textbf{Rate via ERR$\bolds{(Y^N)}$} & \textbf{Rate via $\bolds{e(N)}$}
\\
\hline
Implicit & $-0.96141$ & $-1.00460$\\
Explicit & $-0.99073$ & $-0.98372$\\
Trapezoidal & $-0.02989$ & $-0.33775$\\
\hline
\end{tabular*}
\end{table}


Both the implicit and explicit schemes are found to converge with rate $1$.
This does not mean that the estimates in Theorems
\ref{thmainresult} and~\ref{thconvergencerateforexplicitscheme}
(or that in the Fundamental Lemma~\ref{lemfundamental}) are too conservative
in all generality, but is simply due to the particularity of the
equation studied.
On the one hand, the estimates of Theorems~\ref{thmainresult} and
\ref{thconvergencerateforexplicitscheme} rely on the estimate of
Proposition~\ref{propglobaldiscretizationerror} (on the local
discretization errors) and so
on the regularity of $b,\sigma,f$ and $g$.
We worked under the minimal assumption~(HY0$_{\mathrm{loc}}$) assuming no differentiability. Nonetheless, in this
example all
involved functions are smooth (leading to a smooth solution $u$ to the PDE)
and so this term ends up converging faster
(see also Remark~\ref{remarkhigherratewhenhigherregularityoff}).
On the other hand, the estimates of Theorems~\ref{thmainresult} and
\ref{thconvergencerateforexplicitscheme} also rely on the
estimate of
Lemma~\ref{lemmaerroronterminalconditons} (on the terminal condition
error) which again holds under the mere assumption \textup{(HX0)} for $b$ and
$\sigma$. But here $(X_t)$ is the Brownian motion an its approximation
$(X^N_i)$ is exact, instead of being only of order $\gamma=1/2$ in the
case of
Euler--Maruyama scheme.

As we could verify in our simulations, the computational time is the same
for all the schemes with $\theta> 0$, as expected. On the other hand, similar
to the case of ODEs and SDEs, the convergence rate for $\theta\in\,]1/2,1[$
is no better than for $\theta= 1$. However, the latter choice is more stable
[compare with the definition of $\cR^{\cS}(H)$ and $H^\theta_i$] while
$\theta=1/2$ provides the smallest error.
A more detailed comparison between the different implicit-dominating
schemes is left to a forthcoming work.

Finally, while we were able to compute $\mathrm{ERR}(Y^N)$ in this example, we also
computed $e(N)$. Since we approximated the solution using polynomials
up to
degree $K$, the full (implemented) scheme computes in fact an
approximated process $Y^{N,K}$. As $N \rightarrow+\infty$, this does not
strictly converge to $Y^{\mathrm{true}}$ but rather to some~$Y^K$. The
convergence of $e(N)$ therefore better captures the convergence
of $Y^{N,K}$ to its limit, $Y^K$ and, therefore, yields slightly
different rates.

\subsection*{Example 2---Unbounded terminal condition}

To emphasize the contribution of this work, we analyze in more detail the
unbounded terminal condition case for which one needs to take either
the implicit scheme or the explicit scheme with truncated
terminal condition.
More precisely, we take $g(x)=x$, together with the driver $f(y)=-y^3$. For
the forward process, we take the geometric
Brownian motion with $b(x)=x/2$ and $\sigma(x)= x/2$, started
at $x_0=2$. We choose $T=1$.

%
\begin{figure}

\includegraphics{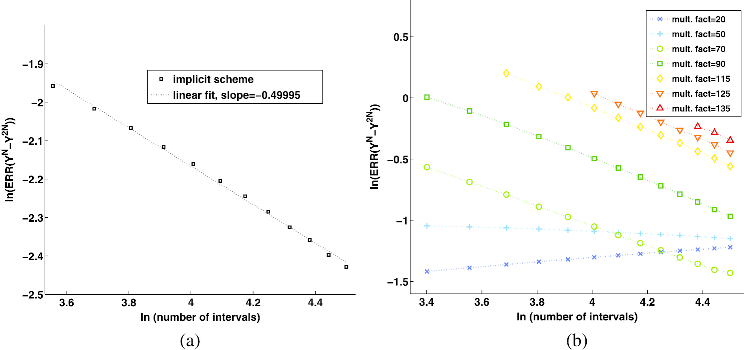}

\caption{\textup{(a)}~Convergence of $e(N)$ for the implicit scheme.
\textup{(b)} Convergence of $e(N)$ for the tamed explicit scheme and various values
of the multiplying factor.
In both cases, we used $N \in\{5i\dvtx i=7,\ldots,18\}$,
$K=4$, $M=10^5$ and $10$ simulations for each point. The results are plotted
in log--log scale.}\label{figpicture2}\label{figpicture21-unbdd-implicit}\label{figpicture22-unbdd-tamed-explicit}
\end{figure}

Figure~\ref{figpicture21-unbdd-implicit}(a) shows the convergence of $e(N)$
[see (\ref{eqerrorN})] for the implicit scheme, while Figure~\ref
{figpicture22-unbdd-tamed-explicit}(b)
shows the same computations for the truncated explicit scheme.
The implicit scheme converges with the rate $1/2$, as expected.
Concerning the
truncated explicit scheme [Figure~\ref{figpicture22-unbdd-tamed-explicit}(b)] we observed through several trials
that its behavior is quite sensitive to the truncation level $L_h$ (defined
in Section~\ref{subsec-tamed-euler}).\footnote{This echoes the
findings of
\citet{ChassagneuxRichou2013}.}
Our asymptotic, theoretical results [see
(\ref{eqY-tamedexplicit}), (\ref{eqZ-tamedexplicit}) and
Theorem~\ref{thconvergencerateforexplicitscheme}] suggest taking
for this particular example $L_h$ as
\[
L_h = \frac{1}{\sqrt{3}} e^{-(1/2)6 T} \biggl(
\frac{1}{h} \biggr)^{1/4}.
\]
We found, however, that this seems to be too conservative for practical
simulations. To better understand the impact of truncation, we
introduced a
multiplying factor $\alpha>0$ and truncate at the level $\alpha L_h$ instead
of $L_h$. In Figure~\ref{figpicture22-unbdd-tamed-explicit}(b) and
Table~\ref{tab-multfactandrates}, we sum up our findings. In
Table~\ref{tab-multfactandrates},
one sees the various multiplying factors and the corresponding estimated
rates [for the sequence $e(N)$ defined in (\ref{eqerrorN})].

%
\begin{table}
\tabcolsep=0pt
\caption{Estimated rate for the truncated
explicit scheme at truncation level $\alpha L_h$}\label{tab-multfactandrates}
\begin{tabular*}{\tablewidth}{@{\extracolsep{\fill}}@{}lccccccc@{}}
\hline
\textbf{Mult. factor} $\bolds{\alpha}$ & \textbf{20} & \textbf{50} & \textbf{70} & \textbf{90} & \textbf{115} & \textbf{125} & \textbf{135}
\\
\hline
Rate 
& 0.179 & $-$0.096& $-$0.801 & $-$0.896 & $-$0.929 & $-$0.970& $-$0.955\\
\hline
\end{tabular*}
\end{table}

%
\begin{figure}[b]

\includegraphics{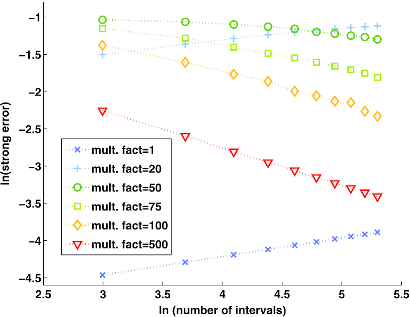}

\caption{Convergence of the error $\bE[ \llvert
T_{L_{1/N}}(g(X^{N}_{N}) )-
T_{L_{1/(2N)}}(g(X^{2N}_{2N})) \rrvert ^2
]^{1/2}$ on the
terminal condition,
computed for $N \in\{20i\dvtx i=1,\ldots,10\}$.
Plot in log--log scale with different levels of truncation
$L_{1/N}=\alpha L_h$,
done with
$M=10^5$ and $10$ simulations for each point.
The estimated slopes are, for the
corresponding multiplicative factors:
$0.25,0.17,-0.12,-0.29,-0.41,-0.50$ (reading the legend from
top to bottom).}\label{figpic3}
\end{figure}

By looking at Figure~\ref{figpicture22-unbdd-tamed-explicit}(b), we see that
the situation is complex and a separate argumentation is required for
``small'' and ``big'' multiplying factors. For $\alpha$ too small (up to
$40$), the scheme does not seem to converge. This is due to the fact
that a significant number of forward paths fall beyond truncation
levels $\alpha
L_{1/N}$ and $\alpha L_{1/{(2N)}}$.
Consequently, the strong convergence property for the forward approximation
does not guarantee that the quantity
$\bE[ \llvert T_{L_{1/N}}(g(X^{N}_T) )-
T_{L_{1/(2N)}}(g(X^{2N}_{T})) \rrvert ^2 ]^{1/2}$
decays\vspace*{1pt} with the rate
$1/2$, as is shown in Figure~\ref{figpic3}.
This lack of ``good convergence'' at the terminal time then translates
into a deterioration of the convergence rate for
the BSDE part of the scheme.
Note that there is no contradiction
with what is predicted by Theorem
\ref{thconvergencerateforexplicitscheme}. Indeed, it is expected that
for very large values of $N$ the asymptotic convergence will begin to take
place.
\footnote{In order to significantly increase $N$, we would also need to
increase $M$ to levels that are beyond our computational capabilities.}

For bigger values of $\alpha$ (between $40$ and $60$), we can finally observe
the transition to the asymptotic regime happening in our window of $N$'s.

Finally, for larger values of $\alpha$ ($60$ and above), we mark on
Figure~\ref{figpicture22-unbdd-tamed-explicit}(b) only the finite values
of $e(N)$ [defined in (\ref{eqerrorN})]. This shows in a
rather clear fashion that if we do not truncate strongly enough (for a given
value of $N$)
the scheme ``blows up'' (the code produces \emph{NaN} values).
One also observes that the bigger the multiplying factor $\alpha$ the
smaller the
time-step must be in order to make sure that $e(N)$ decays appropriately
(converges). This depicts very well the scenario described in our
counter-example.
We believe that the high convergence rates 
appearing in Table~\ref{tab-multfactandrates} when $\alpha$ is big is
due to
the smoothness of the driver $f$ we chose for Example~2 (similar to
Example~1)
and its damping effect on the dynamics of the scheme.
We leave an in-depth analysis of this fact for future research.


\begin{appendix}\label{app}
\section*{Appendix}

\subsection{Motivating example}\label{appdxmotivatignexample}
\setcounter{equation}{0}

Before we state the main result, we recall a result on the behavior of
Gaussian random variables [which we do not prove, but the reader is
invited to
try, in any case see Lemma 4.1 in \citet{HutzenthalerJentzenKloeden2011}]. The
notation and probability spaces we work with in this \hyperref[app]{Appendix} are as stated
in Section~\ref{secPreliminaries}.

%
\begin{lemma}
\label{lemmapropertiesofBM}
Let $(\Omega, \cF,\bP)$ be a probability space and let $Z\dvtx \Omega\to
\bR$ be
an $\cF/\cB(\bR)$-measurable mapping with standard normal
distribution. Then
for any $x\in[0,\infty)$ it holds that
\[
\bP\bigl[ \llvert Z\rrvert\geq x \bigr]\geq\tfrac{1}4 x
e^{-x^2}.
\]
\end{lemma}

The statement of Lemma~\ref{lemmaeulerdoesntwork} follows from the next
lemma.

\begin{lemma}
\label{lemma-counterexample}
Let $\pi^N$ denote the uniform grid of the time interval $[0,1]$ with $N+1$
points and step size $h:=1/N$, where $N\in\bN$. Define the driver $f(y):=-y^3$
and the terminal condition $\xi\in L^p(\cF_1)$ for any $p\geq2$. Let $(Y,Z)$
be the unique solution to (\ref{counterexampleFBSDE}). Denote by
$\{Y^{(N)}_i\}_{i\in\{0,\ldots,N\}}$ the Euler approximation of
$(Y_t)_{t\in[0,1]}$ defined via (\ref{eqeulerschememotivating}) over
the grid
$\pi^N$.

Assume that $N$ is fixed and that $\xi$ verifies $\llvert \xi
\rrvert \geq2\sqrt{N}$
$\bP$-a.s. then:
\begin{longlist}[(iii)]
\item[(i)] For any $i \in\{0,\ldots,N\}$ it holds that $\llvert
Y_i\rrvert \ge
2^{2^{N-i}}\sqrt{N}$.
\end{longlist}

Assume now that $N$ is an \emph{even} number (hence $t=1/2$ is common
to all grids $\pi^N$) and denote by $Y^{(N)}_{1/2}$ the
approximation at the
time point $t=1/2$ (corresponding to $i=N/2$). Define $\xi$ as
$\xi:=W_{1/2}\in L^p(\cF_1)\setminus L^\infty(\cF_1)$ for any
$p\geq
1$.
\begin{longlist}[(iii)]
\item[(ii)] For any $i \in\{\frac{N}{2},\ldots,N\}$, on the set
$\{\omega\dvtx \xi(\omega)\ge2\sqrt{N}\}$ it holds that $\llvert
Y_i(\omega)\rrvert \ge
2^{2^{N-i}}\sqrt{N}$.
\item[(iii)]
Moreover, $\lim_{N \to\infty} \bE[ \llvert Y_{1/2}^{(N)}\rrvert
]=+\infty$.
\end{longlist}
\end{lemma}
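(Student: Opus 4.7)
The plan is a two-stage pathwise argument combined with a Gaussian tail estimate. For Part (i), I would treat the situation in which $\xi$ has a fixed sign (say $\xi \geq 2\sqrt{N}$), since this is what Part (ii) will actually require; in that case the recursion \eqref{eq:eulerschememotivating} collapses pathwise to $Y_i = Y_{i+1}(1-hY_{i+1}^2)$. The key numerical observation is that whenever $|y|\geq 2\sqrt{N}$ one has $hy^2 \geq 4$, so
\[
|y(1-hy^2)| = |y|(hy^2 - 1) \geq \tfrac{3}{4}h|y|^3,
\]
with $y(1-hy^2)$ carrying the opposite sign of $y$. Setting $a_i := |Y_i|/\sqrt{N}$, this gives the scale-invariant recursion $a_i \geq \tfrac{3}{4}a_{i+1}^3$; once $a_{i+1} \geq 2$ is known, $a_i \geq \tfrac{3}{2}a_{i+1}^2 \geq a_{i+1}^2$. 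Backward induction from $a_N \geq 2$ then yields $a_i \geq 2^{2^{N-i}}$, with alternating signs propagating through the iteration.

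For Part (ii), the crucial point is that $W_{1/2}$ is $\cF_{N/2}$-measurable and therefore $\cF_i$-measurable for all $i\geq N/2$. A short backward induction shows that each $Y_i$ with $i\in\{N/2,\dots,N\}$ is also $\cF_{N/2}$-measurable, so the conditional-expectation step in \eqref{eq:eulerschememotivating} becomes trivial and the recursion reduces to $Y_i = Y_{i+1}(1-hY_{i+1}^2)$ pointwise. On the event $\{\omega: W_{1/2}(\omega) \geq 2\sqrt{N}\}$ the deterministic reasoning of Part (i) then applies $\omega$-by-$\omega$ and delivers $|Y_i(\omega)| \geq 2^{2^{N-i}}\sqrt{N}$ for every $i \in \{N/2,\dots,N\}$. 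For Part (iii), taking $i = N/2$ in (ii) gives
\[
\bE[\,|Y^{(N)}_{1/2}|\,] \geq 2^{2^{N/2}}\sqrt{N}\ \bP\bigl[W_{1/2}\geq 2\sqrt{N}\bigr].
\]
Since $\sqrt{2}\,W_{1/2}$ is standard Gaussian, Lemma \ref{lemma:propertiesofBM} combined with the symmetry identity $\bP[Z\geq x] = \tfrac{1}{2}\bP[|Z|\geq x]$ yields $\bP[W_{1/2}\geq 2\sqrt{N}] \geq \tfrac{\sqrt{2N}}{4}\,e^{-8N}$, hence $\bE[|Y^{(N)}_{1/2}|] \geq c\, N\, 2^{2^{N/2}} e^{-8N}$ for a universal $c>0$. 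The double-exponential factor $2^{2^{N/2}}$ overwhelms the single-exponential $e^{-8N}$, proving divergence; that $\xi = W_{1/2} \in L^p\setminus L^\infty$ for every $p\geq 2$ is immediate from Gaussian moments.

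The main obstacle is the bookkeeping in Part (i): one must propagate both the magnitude bound and the constant-sign information through the induction so that the cube function may be bounded below without cancellation. This is precisely why Part (ii) restricts to $i \geq N/2$ — in that range the $\sigma$-field issue disappears and the deterministic iteration applies to each sample path, so the sign of $Y_i(\omega)$ is unambiguously controlled by the sign of $W_{1/2}(\omega)$ and the parity of $N-i$.
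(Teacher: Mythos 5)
Your overall strategy coincides with the paper's: a backward induction on the doubling exponent for (i), the observation that $W_{1/2}$ is $\cF_i$-measurable for $i\ge N/2$ so the conditional expectations in \eqref{eq:eulerschememotivating} become trivial for (ii), and the Gaussian tail bound of Lemma \ref{lemma:propertiesofBM} pitted against the double exponential for (iii). Parts (ii) and (iii) are correct as written (and your explicit use of the symmetry $\bP[Z\ge x]=\tfrac12\bP[\abs{Z}\ge x]$ is in fact slightly cleaner than the paper's, which quietly mixes the one-sided event with the two-sided tail probability).

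There is, however, one concretely wrong step in your treatment of Part (i): you claim that assuming $\xi\ge 2\sqrt{N}$ a.s.\ makes the recursion ``collapse pathwise'' to $Y_i=Y_{i+1}(1-hY_{i+1}^2)$. It does not. The sign restriction on $\xi$ has nothing to do with measurability; $Y_{i+1}$ is $\cF_{i+1}$-measurable but in general not $\cF_i$-measurable, so the conditional expectation $\bE_i[\,\cdot\,]$ in \eqref{eq:eulerschememotivating} remains genuine for an arbitrary terminal condition with $\abs{\xi}\ge 2\sqrt{N}$ a.s. (The collapse is only legitimate in Part (ii), where it follows from $\cF_{N/2}$-measurability of $W_{1/2}$ — a point you correctly make there.) The repair is short and is exactly what the paper's proof of (i) does: one propagates a \emph{one-sided} almost-sure bound through the conditional expectation. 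Namely, if $Y_{i+1}\ge c$ a.s.\ with $hc^2\ge 4$, then pointwise $Y_{i+1}(1-hY_{i+1}^2)=-Y_{i+1}(hY_{i+1}^2-1)\le -c(hc^2-1)$ (the map $y\mapsto y(hy^2-1)$ being increasing on $[c,\infty)$), and taking $\bE_i$ preserves this upper bound, giving $Y_i\le -c(hc^2-1)$ a.s.; the symmetric statement holds when $Y_{i+1}\le -c$. Your inequality $\abs{y}(hy^2-1)\ge\tfrac34 h\abs{y}^3$ and the resulting recursion $a_i\ge\tfrac34 a_{i+1}^3\ge a_{i+1}^2$ then run unchanged, with the alternating signs you already track. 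Without this insertion the induction in (i) is not justified; with it, your argument matches the paper's.
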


\begin{pf}
For the given $f$ and $\xi$, the results from Section~2 in
\citet{Pardoux1999} combined with the a priori estimates stated in our
Section~\ref{secPreliminaries} ensure the existence and uniqueness of a solution $(Y,Z)\in
\cS^p\times\cH^p$ to BSDE (\ref{counterexampleFBSDE}) for any
$p\geq2$. We
now fix $N$ and drop the superscript $(N)$ from $Y^{(N)}$.

\begin{longlist}
\item[\emph{Proof of Part} (i).]
Without loss of generality, assume that $\xi
=Y_N \ge2
\sqrt{N}$. Then
\[
Y_{N-1} = \bE_{N-1}\bigl[Y_N -
Y_N^3 h \bigr] = \bE_{N-1}\bigl[Y_N
\bigl( 1 - Y_N^2 h \bigr) \bigr].
\]
Observe that $Y_N^2 \ge2 N$ which implies $(1- Y_N^2 h) \le(1-2^2) <0$.
Hence (since $Y_N>0$),
\[
Y_{N-1} = \bE_i \bigl[ Y_N\bigl( 1 -
Y_N^2 h \bigr) \bigr]\le- 2\sqrt{N}\bigl(2^2
-1\bigr) \le- 2^2 \sqrt{N}.
\]
Next (since $Y_{N-1}<0$) $Y_{N-1}^2 \ge2^4 N$ which implies
$1 - Y_{N-1}^2h \le(1 - 2^4) <0$. Hence,
\begin{eqnarray*}
Y_{N-2} &=& \bE_i \bigl[ Y_{N-1}
\bigl(1-Y_{N-1}^2 h\bigr) \bigr] = \bE_i \bigl[
(-Y_{N-1}) \bigl(Y_{N-1}^2 h-1\bigr) \bigr]
\\
& \ge& 2^2 \sqrt{N}\bigl(2^4-1\bigr) \ge2^{2^2}
\sqrt{N}.
\end{eqnarray*}
Proceeding by induction, we can show that
\[
\llvert Y_i\rrvert\ge2^{2^{N-i}}\sqrt{N}.
\]
Indeed, assume $\llvert Y_{i+1}\rrvert \ge
2^{2^{N-i-1}}\sqrt
{N}$ (in the light of
above calculations; the negative case is analogous), then
\[
Y_i = \bE_i \bigl[Y_N\bigl( 1 -
Y_N^2 h \bigr) \bigr] \le2^{2^{N-i-1}}\sqrt{N} \bigl(
\bigl(2^{2^{N-i-1}} \bigr)^2 -1 \bigr) \le2^{2^{N-i}}
\sqrt{N}
\]
and statement (i) is proved.

Before proving (ii) and (iii), we remark that no conditional
expectation needs to be computed for the scheme
(\ref{eqeulerschememotivating}) for $i \in
\{{N}/{2},\ldots,N\}$ because $\xi=W_{1/2}$ is
$\cF_t$-adapted for any $t\in[1/2,1]$.
The scheme's approximations up to $Y^{(N)}_{1/2}$ can be written as
\begin{eqnarray*}
Y^{(N)}_{N} &=&W_{1/2},\qquad Y^{(N)}_{N-1}
=\psi(W_{1/2} ),
\\
Y^{(N)}_{N-2} &=&\psi
\bigl(\psi(W_{1/2} ) \bigr),\qquad\ldots,\qquad
Y^{(N)}_{N/2} =\psi^{\circ(N/2)} (W_{1/2} ),
\end{eqnarray*}
where $\psi(x):=x-hx^3$ and $\psi^{\circ(n)}$ denotes the
composition of
$\psi$ with
itself $n$-times ($n\in\bN$).
\end{longlist}

\begin{longlist}
\item[\emph{Proof of Part} (ii).]
We work on the event that $\xi=Y_N \ge2
\sqrt{N}$.
We
have first
\[
Y_{N-1} = \bE_{N-1}\bigl[Y_N -
Y_N^3 h \bigr] = Y_N\bigl( 1 -
Y_N^2 h \bigr).
\]
Observe that $Y_N^2 \ge2^2 N$ which implies $(1- Y_N^2 h) \le(1-2^2) <0$.
Hence (since $Y_N>0$),
\[
Y_{N-1} = Y_N\bigl( 1 - Y_N^2 h
\bigr) \le- 2\sqrt{N}\bigl(2^2 -1\bigr) \le- 2^2
\sqrt{N}<0.
\]
Next, since $Y_{N-1}<0$, $Y_{N-1}^2 \ge2^4 N$ which implies
$1 - Y_{N-1}^2h \le(1 - 2^4) <0$. Hence,
\[
Y_{N-2} = Y_{N-1}\bigl(1-Y_{N-1}^2 h
\bigr) = -Y_{N-1}\bigl(Y_{N-1}^2 h-1\bigr)
\ge2^2 \sqrt{N}\bigl(2^4-1\bigr) \ge2^{2^2}
\sqrt{N}.
\]
Proceeding by induction we can easily show that
\[
\llvert Y_i\rrvert\ge2^{2^{N-i}}\sqrt{N},\qquad i=
\frac{N}{2},\ldots,N.
\]
Indeed, assume $Y_{i+1} \ge2^{2^{N-i-1}}\sqrt{N}$ (note that in the light
of the above calculations the negative case is analogous). Then
\[
Y_i = Y_{i+1}\bigl( 1 - Y_{i+1}^2 h
\bigr) \le2^{2^{N-i-1}}\sqrt{N} \bigl( 1 - \bigl(2^{2^{N-i-1}}
\bigr)^2 \bigr) \le- 2^{2^{N-i}}\sqrt{N}.
\]
\end{longlist}

\begin{longlist}
\item[\textit{Proof of Part} (iii).]
It follows easily from Lemma~\ref{lemmapropertiesofBM} that
\begin{eqnarray*}
&&\bP\bigl[ \llvert W_{1/2}\rrvert\ge2\sqrt{N} \bigr] \ge
\frac{\sqrt{2}}{2} \sqrt{N} e^{-8N}.
\end{eqnarray*}
\end{longlist}

Then, using part~(i) (to go from the first to the second line)
and the above remark (on the third line), we have
\begin{eqnarray*}
&& \lim_{N \rightarrow\infty}\bE\bigl[ \bigl\llvert Y^{(N)}_{1/2}
\bigr\rrvert\bigr]
\\
&&\qquad = \lim_{N \rightarrow\infty} \bE\bigl[\1_{\{\xi\ge2\sqrt{N}\}}\bigl
\llvert
Y^{(N)}_{1/2}\bigr\rrvert+ \1_{\{\xi< 2\sqrt{N}\}}\bigl\llvert
Y^{(N)}_{1/2}\bigr\rrvert\bigr]
\\
&&\qquad \ge\lim
_{N \rightarrow\infty} \bE\bigl[\1_{\{\xi\ge2\sqrt{N}\}} \bigl\llvert
Y^{(N)}_{1/2}\bigr\rrvert\bigr]
\\
&&\qquad \ge\lim_{N \rightarrow\infty} \bE\bigl[\1_{\{\xi\ge2\sqrt{N}\}}
2^{2^{N-N/2}} \sqrt{N} \bigr]
\\
&&\qquad = \lim_{N\rightarrow\infty} 2^{2^{N/2}} \sqrt{N} \bP\bigl[ \llvert
W_{1/2} \rrvert\ge2\sqrt{N} \bigr]
\\
&&\qquad \geq\lim_{N\rightarrow\infty}
2^{(2^{N/2})} \frac{\sqrt{2}}{2} N e^{-8N} =+\infty.
\end{eqnarray*}\upqed
\end{pf}

\subsection{Basics of Malliavin's calculus}
\label{appendix-malliavin-calculus}
We briefly introduce the main notation of the stochastic calculus of
variations also known as Malliavin's calculus. For more details, we
refer the reader to \citet{nualart2006}, for its application to BSDEs
we refer
to \citet{Imkeller2008}. Let ${\bolds\cS}$ be the space
of random variables of the form
\[
\xi= F \biggl(\biggl(\int_0^T
h^{1,i}_s \,\ud W^1_s
\biggr)_{1\le i\le
n},\ldots,\biggl(\int_0^T
h^{d,i}_s \,\ud W^d_s
\biggr)_{1\le i\le n}\biggr),
\]
where $F\in C_b^\infty(\bR^{n\times d})$, $h^1,\ldots,h^n\in L^2([0,T];
\bR^d)$, $n\in\bN$.
To simplify notation, assume that all $h^j$ are written as row
vectors.
For $\xi\in{\bolds\cS}$, we define $D = (D^1,\ldots, D^d)\dvtx {\bolds\cS
}\to
L^2(\Omega\times[0,T])^d$ by
\[
D^i_\theta\xi= \sum_{j=1}^n
\frac{\partial F}{\partial x_{i,j}} \biggl( \int_0^T
h^1_t \,\ud W_t,\ldots,\int
_0^T h^n_t\,\ud
W_t \biggr)h^{i,j}_\theta,\qquad0\leq\theta\leq
T, 1\le i\le d,
\]
and for $k\in\bN$ its $k$-fold iteration by
$D^{(k)} = (D^{i_1}\cdots D^{i_k})_{1\le i_1,\ldots, i_k\le d}$. For
$k\in\bN$, $p\ge1$ let $\bD^{k,p}$ be the closure of $\cS$ with
respect to
the norm
\[
\llVert\xi\rrVert_{k,p}^p = \bE\Biggl[\llVert\xi\rrVert
^p_{L^p} + \sum_{i=1}^{k}
\bigl\llVert\bigl\llvert D^{(k)} \xi\bigr\rrvert\bigr\rrVert
_{(\cH^p)^i}^p \Biggr].
\]
$D^{(k)}$ is a closed linear operator on the space $\mathbb{D}^{k,p}$.
Observe that if $\xi\in\bD^{1,2}$ is $\cF_t$-measurable then
$D_\theta
\xi=0$ for $\theta\in(t,T]$. Further denote
$\mathbb{D}^{k, \infty}=\bigcap_{p>1}\mathbb{D}^{k,p}$.

We also need Malliavin's calculus for $\bR^m$ valued smooth stochastic
processes. For $k\in\bN, p\ge1$, denote by $\mathbb{L}^{k,p}(\bR
^m)$ the set
of $\bR^m$-valued progressively measurable processes $u = (u^1,\ldots, u^m)$
on $[0,T]\times\Omega$ such that:
\begin{longlist}[(iii)]
\item[(i)]
For Lebesgue-a.a. $t\in[0,T]$, $u(t,\cdot)\in(\mathbb{D}^{k,p})^m$;
\item[(ii)] $[0,T]\times\Omega\ni(t,\omega)\mapsto D^{(k)}
u(t,\omega)\in
(L^2([0,T]^{1+k}))^{d\times n}$ admits a progressively measurable
version;
\item[(iii)] $\llVert u \rrVert_{k,p}^p
= \llVert u\rrVert_{\cH^p}^p+\sum_{i=1}^{k} \llVert D^i
u \rrVert_{(\cH^p)^{1+i}}^p <\infty$.
\end{longlist}

Note that Jensen's inequality gives\footnote{The reason behind this last
inequality is that within the BSDE framework the
usual tools to obtain a priori estimates yield with much difficulty the LHS
while with relative ease the RHS.} for all $p\geq2$
%
\begin{equation}
\label{jensenforMallCalculus} \bE\biggl[ \biggl( \int_0^T
\int_0^T\llvert D_u
X_t\rrvert^2\,\ud u \,\ud t \biggr)^{p/2} \biggr]
\leq T^{p/2-1}\int_0^T \llVert
D_u X\rrVert_{\cH^p}^p \,\ud u.
\end{equation}
We recall a result from \citet{Imkeller2008} concerning the rule for the
Malliavin differentiation of It\^o integrals which is of use in
applications of Malliavin's calculus to stochastic analysis.

\begin{theorem}[{[Theorem 2.3.4 in \citet{Imkeller2008}]}]
\label{malldiffstochintegrals}
Let $(X_t)_{t\in[0,T]}\in\cH^2$ be an adapted process
and define $M_t:=\int_0^t X_r\,\udwr$ for $t\in[0,T]$. Then
$X\in\bL^{1,2}$ if and only if $M_t\in\bD^{1,2}$ for any $t\in[0,T]$.

Moreover, for any $0\leq s, t\leq T$ we have
%
\begin{equation}
D_s M_t = X_s\1_{\{s\leq t\}}(s) +
\1_{\{s\leq t\}}(s)\int_s^t D_s
X_r \,\udwr.
\end{equation}
\end{theorem}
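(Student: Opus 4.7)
The plan is to prove both directions separately, using the standard strategy of first establishing the commutation formula on a dense class of elementary processes and then extending by closability of $D$. Throughout, the key identity will be that for $0 \le a < b \le T$ and $F\in \bD^{1,2}$ with $F$ being $\cF_a$-measurable, $D_s\big(F(W_b - W_a)\big) = (D_s F)(W_b - W_a) + F\,\1_{(a,b]}(s)$, which follows directly from the definition of $D$ on the space $\cS$ and the product rule.

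First I would handle the direction $X\in\bL^{1,2}\Rightarrow M_t\in\bD^{1,2}$ together with the explicit formula. I would start with simple adapted processes of the form $X_r=\sum_{i=0}^{n-1}F_i\,\1_{(t_i,t_{i+1}]}(r)$ with $F_i\in\bD^{1,2}$ being $\cF_{t_i}$-measurable. For such $X$, one has the explicit representation $M_t=\sum_i F_i(W_{t_{i+1}\wedge t}-W_{t_i\wedge t})$, and applying the product rule termwise gives
\[
D_s M_t \;=\; \sum_i (D_s F_i)\,(W_{t_{i+1}\wedge t}-W_{t_i\wedge t}) \;+\; \sum_i F_i\,\1_{(t_i,t_{i+1}\wedge t]}(s).
\]
Using that $D_s F_i=0$ whenever $s>t_i$ (by $\cF_{t_i}$-measurability), the first sum equals $\int_s^t D_s X_r\,\udwr$ on $\{s\le t\}$, while the second sum collapses to $X_s\,\1_{\{s\le t\}}$. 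This establishes the formula on simple processes. To pass to a general $X\in\bL^{1,2}$, I would pick a sequence $X^n$ of simple adapted processes with $X^n\to X$ in $\bL^{1,2}$ (existence follows from density; the adaptedness is preserved by conditional expectations of $X$ against a dyadic partition). Then $M^n_t\to M_t$ in $L^2$ by It\^o's isometry, and writing
\[
D_s M^n_t - D_s M^m_t = (X^n_s - X^m_s)\,\1_{\{s\le t\}} + \1_{\{s\le t\}}\int_s^t (D_s X^n_r - D_s X^m_r)\,\udwr,
\]
Fubini and It\^o's isometry give that $\{DM^n_t\}$ is Cauchy in $L^2([0,T]\times\Omega)$. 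Closability of $D$ then yields $M_t\in\bD^{1,2}$ and the desired formula in the limit.

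For the converse direction $M_t\in\bD^{1,2}\Rightarrow X\in\bL^{1,2}$, I would argue as follows. Since $M$ is a square-integrable martingale with $M_0=0$, write $M_t = \int_0^t X_r\,\udwr$. The plan is to use the formula already established on simple approximations to build an a priori estimate. Define the approximations $X^n_r := 2^n \int_{(\lfloor 2^n r\rfloor-1)/2^n}^{\lfloor 2^n r\rfloor/2^n} X_u\,\udu$ (with the usual adjustment near 0), which are piecewise constant adapted processes converging to $X$ in $\cH^2$. Using the forward direction, each associated martingale $M^n_t$ belongs to $\bD^{1,2}$ with an explicit formula, and one can represent $M^n_t - M_t$ as a stochastic integral to deduce via It\^o's isometry and the closability that the Malliavin derivatives of $M^n_t$ converge in $L^2([0,T]\times\Omega)$. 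Inverting the identity on $\{s\le t\}$ (the first term gives $X^n_s$ and the second is orthogonal to it in an appropriate sense) shows $X^n_s$ converges in $\bD^{1,2}$ uniformly in $s$, hence $X\in\bL^{1,2}$.

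The main obstacle will be the converse direction: one has to be careful that the pointwise-in-$t$ Malliavin differentiability of $M_t$ produces a jointly measurable version of $DM$ with the right integrability so that the $X$-part can be isolated from the stochastic-integral part. The cleanest way is probably to identify $X_s$ as $D_s M_T$ restricted to the diagonal via an approximation, which then automatically lies in $\bL^{1,2}$ by construction; the technical subtlety is producing a progressively measurable version of $DM$ and justifying the diagonal evaluation, which is where the closability of $D$ combined with It\^o's isometry does the heavy lifting.
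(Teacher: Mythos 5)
The paper offers no proof of this statement: it is imported verbatim as Theorem 2.3.4 of \cite{Imkeller2008}, so there is no internal argument to compare yours against and I can only assess the proposal on its own terms.

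Your forward direction ($X\in\bL^{1,2}\Rightarrow M_t\in\bD^{1,2}$ plus the formula) is the standard argument and is sound: the product-rule identity on elementary integrands, the collapse of the two sums into $X_s\1_{\{s\le t\}}$ and $\1_{\{s\le t\}}\int_s^t D_sX_r\,\udwr$ using $D_sF_i=0$ for $s>t_i$, and the passage to the limit via It\^o's isometry (applied both to $M^n_t-M^m_t$ and, after Fubini, to the inner stochastic integral of $D_sX^n_r-D_sX^m_r$) together with closability of $D$ all work. The only point to record explicitly is that the approximating simple processes can be chosen adapted \emph{and} convergent in the full $\bL^{1,2}$ norm; conditional expectations against a refining partition achieve this because $D_sE[F|\cF_t]=E[D_sF|\cF_t]\1_{[0,t]}(s)$.

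The converse direction, however, is circular as written. To "use the forward direction" on the averaged processes $X^n$ you must first know that $X^n\in\bL^{1,2}$, i.e.\ that the Lebesgue averages $2^n\int_a^bX_u\,\udu$ lie in $\bD^{1,2}$. The hypothesis only gives $M_t=\int_0^tX_r\,\udwr\in\bD^{1,2}$, and the Lebesgue averages of $X$ are not functionals of the family $(M_t)$, so their Malliavin differentiability cannot be extracted from the hypothesis. For the same reason, the claimed Cauchy property of $DM^n_t$ in $L^2([0,T]\times\Omega)$ would require control of $\Vert X^n-X^m\Vert_{\bL^{1,2}}$, which is precisely what is to be proved. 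The standard repair is to abandon approximation in this direction and argue via the Wiener chaos decomposition: writing $X_t=\sum_nI_n(f_n(\cdot,t))$ with kernels supported in $\{t_1,\ldots,t_n\le t\}$ by adaptedness, one gets $M_T=\sum_nI_{n+1}(\tilde f_n)$ with $\Vert\tilde f_n\Vert^2=\Vert f_n\Vert^2/(n+1)$ (the placements of the time variable under symmetrization have essentially disjoint supports precisely because $X$ is adapted), so that $\sum_n(n+1)(n+1)!\,\Vert\tilde f_n\Vert^2<\infty$ is equivalent to $\sum_n(n+1)\,n!\,\Vert f_n\Vert^2<\infty$, i.e.\ to $X\in\bL^{1,2}$. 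Without this (or an equivalent) input the converse implication, and hence the "if and only if", is not established.
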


\subsection{A particular Gronwall lemma}
We state here a ``discrete Gronwall lemma'' of some kind, particularly useful
for the numerical analysis of BSDEs, and which we use extensively in this
work.

\begin{lemma}
\label{lemGron}
Let $a_i$, $b_i$, $c_i$, be such that $a_i,b_i \ge0$, $c_i\in\bR$
for $i=0,1,\ldots,N$.
Assume that, for some constant $c > 0$ and $h>0$, we have
%
\begin{equation}
\label{eqrec} a_i + b_i \le(1+ch)a_{i+1} +
c_i\qquad\mbox{for } i=0,1,\ldots,N-1.
\end{equation}
Then the following inequality holds for every $i$:
\[
a_i + \sum_{j=i}^{N-1}
b_j \le e^{c(N-i)h} a_N + \sum
_{j=i}^{N-1} e^{c(j-i)h} c_j.
\]
\end{lemma}

\begin{pf}
The estimate is clearly true for $i=N-1$ (even for $i=N$ in fact). Then,
for any $i \le N-2$, if it is true for $i+1$, by multiplying both sides by
$e^{ch}$ we find that
\[
e^{ch} a_\ip+ e^{ch} \sum
_{j=i+1}^{N-1} b_j \le e^{c(N-i)h}
a_N + \sum_{j=i+1}^{N-1}
e^{c(j-i)h} c_j.
\]
Summing\vspace*{2pt} this inequality with (\ref{eqrec}) and noting that
$\sum_{j=i+1}^{N-1}
b_j \le e^{ch} \sum_{j=i+1}^{N-1} b_j$ due to the positivity of the
$b_j$ terms gives the sought estimate for any $i$.
\end{pf}
\end{appendix}

%

\section*{Acknowledgments}
We would like to thank Samuel Cohen (University of
Oxford), Gechun Liang (King's college) and Joscha Diehl (TU-Berlin) for
helpful discussions.



%

\printaddresses
\end{document}